\documentclass[10pt]{article}

\usepackage[margin=1in]{geometry}
\usepackage{amsmath,amssymb,amsthm,mathtools,mathrsfs}
\usepackage{enumitem}
\usepackage{array,booktabs}
\usepackage{aliascnt}
\usepackage[colorlinks=true,linkcolor=blue,citecolor=blue,urlcolor=blue]{hyperref}
\usepackage[nameinlink,capitalise,noabbrev]{cleveref}
\usepackage{microtype}

\numberwithin{equation}{section}
\setlist[enumerate]{leftmargin=2.2em,itemsep=0.25em,topsep=0.4em}
\setlist[itemize]{leftmargin=2.2em,itemsep=0.25em,topsep=0.4em}

\newtheorem{theorem}{Theorem}[section]
\newaliascnt{proposition}{theorem}
\newtheorem{proposition}[proposition]{Proposition}
\aliascntresetthe{proposition}
\newaliascnt{lemma}{theorem}
\newtheorem{lemma}[lemma]{Lemma}
\aliascntresetthe{lemma}
\newaliascnt{corollary}{theorem}
\newtheorem{corollary}[corollary]{Corollary}
\aliascntresetthe{corollary}
\newaliascnt{assumption}{theorem}
\newtheorem{assumption}[assumption]{Assumption}
\aliascntresetthe{assumption}

\theoremstyle{definition}
\newaliascnt{definition}{theorem}

\aliascntresetthe{definition}
\newtheorem*{definition*}{Definition}
\newaliascnt{example}{theorem}

\aliascntresetthe{example}

\theoremstyle{remark}
\newaliascnt{remark}{theorem}

\aliascntresetthe{remark}

\crefname{theorem}{Theorem}{Theorems}
\Crefname{theorem}{Theorem}{Theorems}
\crefname{proposition}{Proposition}{Propositions}
\Crefname{proposition}{Proposition}{Propositions}
\crefname{lemma}{Lemma}{Lemmas}
\Crefname{lemma}{Lemma}{Lemmas}
\crefname{corollary}{Corollary}{Corollaries}
\Crefname{corollary}{Corollary}{Corollaries}
\crefname{definition}{Definition}{Definitions}
\Crefname{definition}{Definition}{Definitions}
\crefname{assumption}{Assumption}{Assumptions}
\Crefname{assumption}{Assumption}{Assumptions}
\crefname{example}{Example}{Examples}
\Crefname{example}{Example}{Examples}
\crefname{remark}{Remark}{Remarks}
\Crefname{remark}{Remark}{Remarks}

\newcommand{\R}{\mathbb R}
\newcommand{\C}{\mathbb C}
\newcommand{\Ex}{\mathbb E}
\newcommand{\Prob}{\mathbb P}

\newcommand{\cD}{\mathcal D}
\newcommand{\cE}{\mathcal E}
\newcommand{\cF}{\mathcal F}
\newcommand{\Pres}{\mathrm{Pres}}
\newcommand{\Cyl}{\mathrm{Cyl}}
\newcommand{\Dom}{\operatorname{Dom}}
\newcommand{\Div}{\operatorname{div}}
\newcommand{\Span}{\operatorname{span}}
\newcommand{\supp}{\operatorname{supp}}
\newcommand{\Geo}{\operatorname{Geo}}
\newcommand{\len}{\operatorname{len}}

\newcommand{\dd}{\,\mathrm d}

\hypersetup{
  pdftitle={Closed Response Calculus and Joint Densities for the Liouville Quantum Gravity Metric},
  pdfauthor={Chunhao Cai},
  pdfsubject={Closed response calculus, geodesic occupations, and joint densities for the LQG metric},
  pdfkeywords={Liouville quantum gravity metric, Gaussian free field, response calculus, geodesic occupation measure, Dirichlet form, energy-image-density absolute continuity, joint density}
}

\title{Closed Response Calculus and Joint Densities\\
for the Liouville Quantum Gravity Metric}
\author{Chunhao Cai\\
School of Mathematics (Zhuhai), Sun Yat-Sen University\\
\texttt{caichh9@mail.sysu.edu.cn}}
\date{August 2026}

\begin{document}

\maketitle

\begin{abstract}
We give a general criterion for transferring first-order responses on an
underlying probability space to a closed differential calculus on the law of
an observable.  An integration-by-parts identity removes presentation
ambiguity and yields a closable gradient, its divergence, and a closed Markov
form.

We apply this scheme to the subcritical Liouville quantum gravity metric
throughout the range $0<\gamma<2$.
Sequential compactness of Weyl-perturbed geodesics identifies the derivative
of a logarithmic distance ratio with the Sobolev Riesz representative of the
difference of two normalized geodesic occupation measures.  The induced form
on the projective metric law is the image of a Gaussian directional form and
has the energy-image-density property.  Finally, fixed-target confluence and
a leaf-elimination argument make the response Gram matrix positive definite
for every finite forest of marked pairs.  The corresponding vector of
logarithmic distance ratios therefore has a Lebesgue density.
\end{abstract}

\medskip
\noindent\textbf{2020 Mathematics Subject Classification.} Primary 60D05; Secondary 31C25, 60G60, 60H07.

\noindent\textbf{Keywords.} Liouville quantum gravity metric, Gaussian free field, response calculus, geodesic occupation measure, Dirichlet form, energy-image-density absolute continuity, joint density.

\begingroup
\small
\tableofcontents
\endgroup

\section{Introduction}
\label{sec:intro}

Variational observables in random geometry are often defined by
minimizing over paths or other competitors.  Their first-order response is
therefore determined by the minimizers that are active at the unperturbed
point.  For a random metric, the relevant response is the normalized
occupation measure of a geodesic.  Passing from this pathwise derivative to a
differential operator on the law of the observable raises two questions:
different cylinder presentations of the same image function must have the
same gradient, and the resulting gradient must be closable.

We separate the abstract image-law argument from its LQG verification.
Once measurable linear response sections are available, conditional
integration by parts removes the presentation ambiguity and yields a closed
calculus on the image law.  For the LQG metric, the response sections are
constructed directly from sequential compactness of Weyl-perturbed geodesics.
The resulting Gaussian directional form satisfies the corresponding
energy-image-density absolute-continuity statement, and geodesic confluence
gives joint densities for forest-indexed logarithmic distance ratios.

The proof of joint density has a short conceptual backbone.  The
energy-image-density statement turns almost-sure positivity of the response
Gram determinant into absolute continuity, while fixed-target confluence and
leaf elimination provide this positivity for forests.

\subsection{Image integration by parts, divergence, and closure}

Let $(\Omega,\cF,\Prob)$ be a probability space and $(X,\mathcal X)$ a
measurable space.  Let
\[
 J:(\Omega,\cF,\Prob)\longrightarrow(X,\mathcal X)
\]
be measurable.  For a measurable map $F$, write
\[
 F_\#\nu:=\nu\circ F^{-1}
\]
for the pushforward of a measure $\nu$.  Set
\[
 \mu:=J_\#\Prob,
 \qquad
 \mu(A)=\Prob(J\in A),\quad A\in\mathcal X,
\]
for the law of $J$.

Let $I$ be countable, and let $r_a:X\to\R$, $a\in I$, be measurable
coordinates.  Let $V$ be a real vector space of deterministic directions and
$V_0\subset V$ a countable $\mathbb Q$-linear subspace.  We work with a
measurable field of separable Hilbert spaces $(H_x)_{x\in X}$.  Each $v\in V$
determines a measurable section $x\mapsto v_x\in H_x$, linearly in $v$, and
each $a\in I$ has a measurable response section $x\mapsto g_a(x)\in H_x$.
The prescribed first-order response is
\[
 \partial_v r_a(x):=\langle g_a(x),v_x\rangle_{H_x},
 \qquad a\in I,\quad v\in V.
\]
The construction of these response sections is model dependent; in the LQG
application it is carried out directly in \cref{sec:lqg}.
We identify $v$ with the section $x\mapsto v_x$ when no confusion can arise,
and write
\[
 L^2_\mu(H):=L^2(X,\mu;H_x),
 \qquad
 \|u\|_{L^2_\mu(H)}^2
 :=\int_X\|u(x)\|_{H_x}^2\,\dd\mu(x).
\]

For $n\ge1$, put
\[
 C_b^\infty(\R^n)
 :=
 \bigl\{\varphi\in C^\infty(\R^n):
 \|\partial^\alpha\varphi\|_\infty<\infty
 \text{ for every multi-index }\alpha\bigr\}.
\]
The set of formal cylinder presentations is
\[
 \Pres
 :=
 \bigl\{(\varphi;a_1,\ldots,a_n):
 n\ge1,\ \varphi\in C_b^\infty(\R^n),\
 (a_1,\ldots,a_n)\in I^n\bigr\}.
\]
For $P=(\varphi;a_1,\ldots,a_n)\in\Pres$, define
\[
 \begin{aligned}
 F_P(x)
 &:=\varphi\bigl(r_{a_1}(x),\ldots,r_{a_n}(x)\bigr),\\
 G_P(x)
 &:=\sum_{i=1}^n
 \partial_i\varphi\bigl(r_{a_1}(x),\ldots,r_{a_n}(x)\bigr)g_{a_i}(x).
 \end{aligned}
\]
Then
\[
 \begin{aligned}
 \partial_vF_P(x)
 &:=\sum_{i=1}^n
 \partial_i\varphi\bigl(r_{a_1}(x),\ldots,r_{a_n}(x)\bigr)
 \partial_vr_{a_i}(x)\\
 &=\langle G_P(x),v_x\rangle_{H_x},
 \qquad v\in V.
 \end{aligned}
\]
Set \(\Cyl:=\{F_P:P\in\Pres\}\).
A cylinder function may have more than one presentation.  At this stage,
$F_P=F_Q$ $\mu$-almost everywhere does not yet imply $G_P=G_Q$ in
$L^2_\mu(H)$.

For $P\in\Pres$ and $v\in V_0$, put
\[
 (F_Pv)(x):=F_P(x)v_x,
 \qquad
 U_0:=\Span\{F_Pv:P\in\Pres,\ v\in V_0\}.
\]
On the original probability space, set
\[
 Y_P:=F_P\circ J.
\]
For $v\in V_0$, let $\partial_vY_P$ denote the prescribed directional
derivative of $Y_P$ on $\Omega$.

An \emph{ambient score} for $v\in V_0$ is a random variable
$\beta_v\in L^2(\Prob)$ such that
\[
 \Ex[\partial_vY_P]=\Ex[Y_P\beta_v],
 \qquad P\in\Pres.
\]
Given such a $\beta_v$, a measurable $b_v:X\to\R$ is a
\emph{conditional image score} if
\[
 b_v(J)=\Ex[\beta_v\mid\sigma(J)]
 \qquad\Prob\text{-a.s.}
\]
When $X$ is standard Borel, such a factor exists by the Doob--Dynkin
lemma; for a general measurable image space its existence is part of the
model-dependent input.

\begin{assumption}
\label{ass:intro-response}
The following hold.
\begin{enumerate}[label=\textnormal{(\roman*)}]
\item For $a\in I$, $v\in V_0$, and $P\in\Pres$,
\[
 g_a\in L^2_\mu(H),\qquad
 v\in L^2_\mu(H),\qquad
 \partial_vF_P\in L^2(\mu).
\]
\item The scalar cylinder algebra is dense:
\[
 \overline{\Cyl}^{\,L^2(\mu)}=L^2(\mu),
 \qquad
 \overline{U_0}^{\,L^2_\mu(H)}=L^2_\mu(H).
\]
\item Every $v\in V_0$ admits an ambient score $\beta_v$ and a conditional
image score $b_v$, and
\[
 \partial_vY_P=(\partial_vF_P)(J)
 \qquad\Prob\text{-a.s.},\quad P\in\Pres.
\]
\end{enumerate}
\end{assumption}

Conditional Jensen gives
\[
 \|b_v\|_{L^2(\mu)}^2
 =\Ex\bigl[|b_v(J)|^2\bigr]
 \le \Ex\bigl[|\beta_v|^2\bigr],
 \qquad v\in V_0,
\]
so $b_v\in L^2(\mu)$.  For
$U=\sum_{j=1}^kF_{P_j}v_j\in U_0$, set
\begin{equation}
 \Div_0U
 :=\sum_{j=1}^k
 \bigl(\partial_{v_j}F_{P_j}-F_{P_j}b_{v_j}\bigr).
 \label{eq:intro-divergence-core}
\end{equation}
A priori, this value may depend on the displayed representation of $U$.

\begin{theorem}
\label{thm:intro-image-closure}
Under \cref{ass:intro-response}, the following hold.
\begin{enumerate}[label=\textnormal{(\roman*)}]
\item If
\[
 U=\sum_{j=1}^kF_{P_j}v_j
  =\sum_{\ell=1}^rF_{Q_\ell}w_\ell
 \quad\text{in }L^2_\mu(H),
\]
then
\[
 \sum_{j=1}^k
 \bigl(\partial_{v_j}F_{P_j}-F_{P_j}b_{v_j}\bigr)
 =\sum_{\ell=1}^r
 \bigl(\partial_{w_\ell}F_{Q_\ell}-F_{Q_\ell}b_{w_\ell}\bigr)
 \quad\text{in }L^2(\mu).
\]
Hence \eqref{eq:intro-divergence-core} defines a linear map
\[
 \Div_0:U_0\longrightarrow L^2(\mu)
\]
satisfying
\begin{equation}
 \int_X\langle G_P(x),U(x)\rangle_{H_x}\,\dd\mu(x)
 =-\int_XF_P(x)\Div_0U(x)\,\dd\mu(x),
 \qquad P\in\Pres,\quad U\in U_0.
 \label{eq:intro-core-ibp}
\end{equation}

\item If $F_P=F_Q$ in $L^2(\mu)$, then $G_P=G_Q$ in
$L^2_\mu(H)$.  Thus the rule $D_0F_P:=G_P$ defines a densely defined
closable operator
\[
 D_0:\Cyl\subset L^2(\mu)\longrightarrow L^2_\mu(H).
\]
If $D=\overline{D_0}$, then
\[
 U_0\subset\Dom(D^*),
 \qquad
 D^*U=-\Div_0U\quad(U\in U_0).
\]

\item The bilinear form
\[
 \cE(F,G):=\int_X\langle DF(x),DG(x)\rangle_{H_x}\,\dd\mu(x),
 \qquad F,G\in\Dom(D),
\]
is a densely defined closed symmetric Markov form on $L^2(\mu)$.
\end{enumerate}
\end{theorem}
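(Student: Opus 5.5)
The argument starts from a single elementary integration-by-parts identity on the image space and derives (i)--(iii) from it by density and standard functional analysis. For $P,Q\in\Pres$ and $v\in V_0$, the product $F_PF_Q$ is again a cylinder function with presentation $PQ:=(\varphi\otimes\psi;a_1,\dots,a_n,b_1,\dots,b_m)$, where $(\varphi\otimes\psi)(s,t)=\varphi(s)\psi(t)$ lies in $C_b^\infty$. The Leibniz rule holds at the level of presentations: $\partial_vF_{PQ}=F_Q\,\partial_vF_P+F_P\,\partial_vF_Q$ pointwise. The first step applies the ambient score to $PQ$, using \cref{ass:intro-response}(iii) to write $\partial_vY_{PQ}=(\partial_vF_{PQ})(J)$, and then conditions on $\sigma(J)$:
\[
 \int_X\partial_vF_{PQ}\,\dd\mu=\Ex[Y_{PQ}\beta_v]=\Ex[F_{PQ}(J)\,b_v(J)]=\int_XF_PF_Q\,b_v\,\dd\mu .
\]
Rearranging with Leibniz gives
\[
 \int_X\langle G_P,F_Qv\rangle_{H_x}\,\dd\mu=-\int_XF_P\bigl(\partial_vF_Q-F_Qb_v\bigr)\,\dd\mu .
\]
Summing over a representation of $U\in U_0$ yields \eqref{eq:intro-core-ibp} for that representation. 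All integrals are finite because $F_P$ is bounded, $g_a,v\in L^2_\mu(H)$, and $b_v\in L^2(\mu)$ by conditional Jensen.

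For (i), take two representations of the same $U$ and subtract. The left-hand side of \eqref{eq:intro-core-ibp} depends only on $U$ as an element of $L^2_\mu(H)$, so the difference $\Delta$ of the two candidate divergences satisfies $\int F_P\Delta\,\dd\mu=0$ for all $P\in\Pres$. Since $\Delta\in L^2(\mu)$ and $\Cyl$ is dense in $L^2(\mu)$ by (ii) of the assumption, $\Delta=0$. Linearity of $\Div_0$ follows because concatenating representations is compatible with addition.

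For (ii), suppose $F_P=F_Q$ $\mu$-a.e. Then \eqref{eq:intro-core-ibp} gives $\int\langle G_P-G_Q,U\rangle\,\dd\mu=-\int(F_P-F_Q)\Div_0U\,\dd\mu=0$ for every $U\in U_0$. Density of $U_0$ in $L^2_\mu(H)$ then forces $G_P=G_Q$. So $D_0$ is well defined, and it is linear: a sum $F_P+F_Q$ has the presentation $(\varphi\oplus\psi;\dots)$, whose gradient is $G_P+G_Q$. The same identity shows $U_0\subset\Dom(D_0^*)$ with $D_0^*U=-\Div_0U$, so $D_0^*$ is densely defined and $D_0$ is closable. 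Moreover $D^*=D_0^*$ for $D=\overline{D_0}$.

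For (iii), $\cE$ is symmetric, nonnegative, and densely defined since $\Cyl\subset\Dom(D)$. It is closed because $\cE_1$-Cauchy sequences in $\Dom(D)$ are exactly graph-norm Cauchy sequences and $D$ is closed. The main obstacle is the Markov property. I would first prove a chain rule on the core: for $\psi\in C_b^\infty(\R)$ with $\psi(0)=0$, $\psi\circ F_P=F_{(\psi\circ\varphi;a)}$ and $D_0(\psi\circ F_P)=\psi'(F_P)D_0F_P$. This extends to $F\in\Dom(D)$ by closedness: if $F_n\to F$ in graph norm, then $\psi'(F_n)DF_n\to\psi'(F)DF$ in $L^2_\mu(H)$ along an a.e.-convergent subsequence, by dominated convergence for the factor $\psi'(F_n)$. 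Next I would choose smooth $\psi_\varepsilon$ with $\psi_\varepsilon(t)=t$ on $[0,1]$, $-\varepsilon\le\psi_\varepsilon\le1+\varepsilon$, $0\le\psi_\varepsilon'\le1$ and $|\psi_\varepsilon(t)-\psi_\varepsilon(s)|\le|t-s|$. Then $\cE(\psi_\varepsilon(F))\le\cE(F)$, which by the standard criterion (Fukushima--Oshima--Takeda / Ma--Röckner, Theorem I.4.10) makes $\cE$ Markovian. Equivalently, $\psi_\varepsilon(F)\to F^\#:=(0\vee F)\wedge1$ in $L^2$ with bounded energies, so weak compactness and closedness give $F^\#\in\Dom(D)$ with $\cE(F^\#)\le\cE(F)$.
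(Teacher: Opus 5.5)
Your proposal is correct. For parts (i) and (ii) it follows the paper's proof: the product presentation $PQ$, the ambient score with conditioning on $\sigma(J)$, the Leibniz rearrangement, and density of $\Cyl$ and $U_0$ are exactly the paper's ingredients. Your closability argument (``$D_0^*$ is densely defined'') is just the abstract form of the paper's sequential argument ($F_n\to0$ and $D_0F_n\to Z$ force $Z=0$).

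The genuine difference is in the Markov property in (iii). The paper defines Markovian as ``every normal contraction operates'' and proves this directly. It mollifies the truncation $T_j\circ C$ of an arbitrary normal contraction $C$ and recentres, obtaining $C_j\in C_b^\infty(\R)$ with $C_j(0)=0$, $|C_j'|\le1$, $|C_j(t)|\le|t|$ and $C_j\to C$ pointwise. The chain rule then gives $\cE(C_j(F),C_j(F))\le\cE(F,F)$, dominated convergence gives $C_j(F)\to C(F)$ in $L^2(\mu)$, and lower semicontinuity of the closed form concludes.

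You instead verify only the Fukushima--Oshima--Takeda $\varepsilon$-approximation condition, equivalently that the unit contraction operates. You then rely on the equivalence theorem for closed symmetric forms (FOT, Theorem~1.4.1) to pass to all normal contractions. That step is valid, but it imports a nontrivial external result proved through the associated sub-Markovian semigroup. The paper's route is self-contained. Your own ingredients already suffice to run it, namely the smooth chain rule on $\Dom(D)$ and the weak-compactness/lower-semicontinuity step you use for $F^\#$, so the citation could be dropped.
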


\subsection{Application to the LQG metric}
\label{sec:intro-lqg-application}

We now specialize the preceding construction to the LQG metric.  The
pathwise response used below is proved in \cref{sec:lqg} from sequential
compactness of Weyl-perturbed geodesics.

Fix $0<\gamma<2$.  Let $h$, defined on $(\Omega,\cF,\Prob)$, be a whole-plane
Gaussian free field modulo additive constants, and let $D_h$ be a fixed
normalization of its $\gamma$-LQG metric.  Put
\[
 \xi:=\frac{\gamma}{d_\gamma},
\]
where $d_\gamma$ is the LQG metric dimension exponent
\cite[(1.5)]{GwynneMiller2021}.  Weyl scaling gives
$D_{h+c}=e^{\xi c}D_h$ for constants $c$, so the distance ratios below are
independent of the representative of $h$.

Let $Q\subset\C$ be countable and dense, and set
\[
 I:=\bigl\{\{z,w\}:z,w\in Q,\ z\ne w\bigr\}.
\]
For $e=\{z,w\}\in I$, write $D_h(e):=D_h(z,w)$.  Fix $e_0\in I$ and define
\[
 R_e(h):=\log\frac{D_h(e)}{D_h(e_0)},
 \qquad e\in I.
\]

For finite $E\subset I$, let
\[
 G_E:=\left(\bigcup_{e\in E}e,E\right)
\]
be the graph determined by $E$.  We call $E$ a forest if $G_E$ is acyclic.
Equivalently,
\[
 |E'|<\left|\bigcup_{e\in E'}e\right|
 \qquad(\varnothing\ne E'\subseteq E).
\]

We denote by $\mathcal L^m$ Lebesgue measure on $\R^m$.  The density
statement below holds throughout the subcritical range $0<\gamma<2$.

\begin{theorem}
\label{thm:intro-forest}
Let $m\ge1$ and let
$e_0,e_1,\ldots,e_m\in I$ be pairwise distinct.  Set
\[
 E_F:=\{e_0,e_1,\ldots,e_m\},
 \qquad
 N:=\bigcup_{e\in E_F}e.
\]
Suppose that the finite graph $(N,E_F)$ is acyclic; equivalently,
\[
 |E'|<\left|\bigcup_{e\in E'}e\right|
 \qquad\text{for every }\varnothing\ne E'\subseteq E_F.
\]
Then
\[
 \bigl(R_{e_1}(h),\ldots,R_{e_m}(h)\bigr)_\#\Prob
 \ll \mathcal L^m.
\]
\end{theorem}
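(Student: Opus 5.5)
The plan is to combine three ingredients described in the introduction: the closed form from \cref{thm:intro-image-closure}, the energy-image-density (EID) property of the induced Gaussian directional form, and almost-sure nondegeneracy of the response Gram matrix. Write $H$ for the Cameron--Martin space of the field, a Sobolev space $H^1$ modulo constants, and let $\nu_e$ be the normalized occupation measure of the a.s.\ unique $D_h$-geodesic for $e$. Then the pathwise derivative of $R_e$ in a smooth direction $f$ is
\[
 \partial_f R_e(h)=\xi\Bigl(\int f\,\dd\nu_e-\int f\,\dd\nu_{e_0}\Bigr).
\]
Its Riesz representative is $g_e=\xi\,(-\Delta)^{-1}(\nu_e-\nu_{e_0})$, modulo constants. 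I take as given from \cref{sec:lqg} that these $g_e$ lie in $H$ and are measurable sections of the image law, and that the form $\cE$ has EID. EID states that for $F=(R_{e_1},\dots,R_{e_m})$ one has $F_\#(\det\Gamma(F)\cdot\Prob)\ll\mathcal L^m$, where $\Gamma(F)_{ij}=\langle g_{e_i},g_{e_j}\rangle_H$. It therefore suffices to show $\det\Gamma(F)>0$ almost surely. Equivalently, $g_{e_1},\dots,g_{e_m}$ must be linearly independent in $H$ almost surely. Since $(-\Delta)^{-1}$ is injective on finite signed measures of total mass zero with finite energy, this reduces to: if $\sum_{i=1}^m c_i(\nu_{e_i}-\nu_{e_0})=0$ as a measure, then $c=0$.

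Setting $c_0:=-\sum_{i\ge1}c_i$, the relation becomes $\sum_{i=0}^m c_i\nu_{e_i}=0$, with $c_0,\dots,c_m$ not all zero unless $c=0$. I will prove by induction on $|E_F|$ that for an acyclic $E_F$ the measures $\{\nu_e\}_{e\in E_F}$ are almost surely linearly independent. This is the leaf-elimination argument.

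Since the graph is a forest, it has a leaf: a vertex $z\in N$ of degree one, incident to a unique edge $e^\ast=\{z,w\}$. The key geometric input is fixed-target confluence. Almost surely, for every small $r>0$, the geodesics from $z$ to the other vertices agree near $z$. In particular, a small neighbourhood $B_r(z)$ contains a positive-mass portion of the geodesic $z\to w$, and this portion is not charged by any geodesic of another edge $e\in E_F\setminus\{e^\ast\}$. Indeed, such an edge has both endpoints different from $z$, and a.s.\ a geodesic between points of $Q$ does not pass through another fixed point $z$ of $Q$; its closed trace therefore stays at positive distance from $z$. Restricting the relation $\sum c_e\nu_e=0$ to $B_r(z)$ gives $c_{e^\ast}\nu_{e^\ast}(B_r(z))=0$, and $\nu_{e^\ast}(B_r(z))>0$, so $c_{e^\ast}=0$. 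Removing $e^\ast$ leaves a forest with one fewer edge, and induction finishes. Only countably many null events are used, since the forest is finite and $Q$ is countable.

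The main obstacle is this localisation step. It needs (a) a.s.\ uniqueness of geodesics between fixed rational points, and (b) that no geodesic of a non-incident edge hits the fixed point $z$. Point (b) is where fixed-target confluence and the fact that $z$ is a fixed, non-special point are used. I would derive it from the statement that a fixed point a.s.\ is not on the geodesic between two other fixed points. That statement in turn follows because geodesics have zero Lebesgue measure and the law is translation-covariant, via a Fubini-type argument over the marked point.

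With $\det\Gamma(F)>0$ a.s., EID yields $F_\#\Prob\ll F_\#(\det\Gamma(F)\cdot\Prob)\ll\mathcal L^m$, which is the claim.
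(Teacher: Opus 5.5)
Your architecture matches the paper's: energy-image-density for the image form, reduction to almost-sure linear independence of the responses, injectivity of the Riesz map, and leaf elimination. The only geometric input needed is that a marked vertex $z$ does not lie on the geodesic of any forest edge not containing $z$. The gap is in how you justify that input.

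Your Fubini argument gives $\int_\C\Prob(z\in\gamma_{a,b})\,\dd z=\Ex[\mathcal L^2(\gamma_{a,b})]=0$, so $\Prob(z\in\gamma_{a,b})=0$ only for Lebesgue-a.e.\ $z$. Translation covariance cannot upgrade this to a prescribed $z$, because it moves $a,b$ together with $z$. Even full similarity covariance leaves the ratio $(z-a)/(b-a)$ invariant, so you only obtain the claim for almost every relative configuration. The theorem concerns arbitrary fixed vertices of an arbitrary countable dense $Q$ (for instance $z$ the Euclidean midpoint of $a$ and $b$), so this is not enough.

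Your appeal to confluence at the leaf does not close the hole either. The bound $\nu_{e^\ast}(B_r(z))>0$ is automatic, since $z$ is an endpoint of that geodesic. Geodesics from $z$ other than $\gamma_{e^\ast}$ are not forest edges, so their merging says nothing about the non-incident edges, whose geodesics do not start at $z$.

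The missing idea is to apply fixed-target confluence at the hypothetical interior marked point itself, as in \cref{lem:no-marked-interior}. Suppose $v=\gamma(t_0)\in Q$ with $0<t_0<1$ lies on the unique geodesic $\gamma$ from $a$ to $b$, and set $L=D_h(a,b)$. By uniqueness, $\eta_a(r)=\gamma(t_0-r/L)$ and $\eta_b(r)=\gamma(t_0+r/L)$ are the unit-speed geodesics from $v$ to $a$ and to $b$. Since $a,b$ lie outside a small $D_h$-ball around $v$, confluence at the target $v$ forces $\eta_a=\eta_b$ on some $[0,\delta]$. This contradicts $D_h(\eta_a(r),\eta_b(r))=2r>0$. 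Running this on one full-measure event for all $v\in Q$ gives $\gamma_e^h((0,1))\cap Q=\varnothing$ for every $e\in I$, which is exactly what your leaf step needs.

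Two smaller repairs are also needed.
\begin{itemize}
\item The paper's energy-image-density statement (\cref{cor:lqg-eid}) is for the $H^s(\C)$-valued form with $s>1$. There every finite signed measure lies in $H^{-s}$ and $\mathcal R_s$ is injective. In $\dot H^1$ you would instead have to prove finite logarithmic energy of the occupation measures yourself.
\item Since $R_{e_i}$ is unbounded and not in the cylinder core, apply the energy-image-density statement to $\arctan R_{e_i}$. Its Gram determinant is yours times $\prod_i(1+R_{e_i}^2)^{-2}>0$. Then pull back through the Lipschitz homeomorphism $y\mapsto(\arctan y_i)_i$.
\end{itemize}
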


Since both \(Q\) and the reference edge may be chosen arbitrarily, the same
conclusion holds for any fixed deterministic finite forest of pairs in \(\C\),
after choosing \(Q\) to contain its vertices.  The forest condition is sufficient;
no converse is asserted for cyclic graphs.

The rest of the paper is organized as follows.  \Cref{sec:closure} proves
\cref{thm:intro-image-closure} from \cref{ass:intro-response}.
\Cref{sec:lqg} derives the LQG Weyl response, realizes the response gradients
in a Sobolev Hilbert space, verifies \cref{ass:intro-response}, establishes the required
energy-image-density absolute continuity, and proves \cref{thm:intro-forest} by forest nondegeneracy.

\section{Image calculus and closure}
\label{sec:closure}

Retain \(X,\mu\), the response sections, and the cylinder presentations
from \cref{sec:intro}, and let \(\mathcal X^\mu\) be the
\(\mu\)-completion of \(\mathcal X\).  We first record the elementary
presentation algebra used below.  If
\[
 P=(\varphi;a_1,\ldots,a_n),\qquad
 Q=(\psi;b_1,\ldots,b_m),
\]
then linear combinations and products are represented by concatenating the
coordinate lists and using the corresponding linear combination or product of
the defining functions.  Thus, at the presentation level,
\[
 \begin{aligned}
 F_{\alpha P+\beta Q}&=\alpha F_P+\beta F_Q,
 &G_{\alpha P+\beta Q}&=\alpha G_P+\beta G_Q,\\
 F_{PQ}&=F_PF_Q,
 &G_{PQ}&=F_PG_Q+F_QG_P.
 \end{aligned}
\]
More generally, smooth compositions satisfy the usual finite-dimensional
chain rule.  These identities are purely presentation-level statements; the
descent to functions in \(L^2(\mu)\) is proved below.

\subsection{Cylinder density}

For a sub-$\sigma$-field $\mathcal G\subseteq\mathcal X$, write
$\mathcal G^\mu$ for its $\mu$-completion.  We first record a criterion for
cylinder density.

Recall from \cref{sec:intro} that $I$ is the countable index set of the
coordinate family $(r_a)_{a\in I}$.

\begin{proposition}
\label{prop:projective-density}
Choose finite sets
\[
 I_1\subset I_2\subset\cdots\subset I,
 \qquad \bigcup_{n\ge1}I_n=I.
\]
For each $n$, write $I_n=\{a_{n,1},\ldots,a_{n,m_n}\}$ and define
\[
 \mathbf r_n(x):=
 \bigl(r_{a_{n,1}}(x),\ldots,r_{a_{n,m_n}}(x)\bigr),
 \qquad x\in X.
\]
If
\begin{equation}
 \mathcal X^\mu=\sigma(r_a:a\in I)^\mu,
\label{eq:sigma-generation}
\end{equation}
then
\[
 \overline{\bigcup_{n\ge1}
 \{\varphi(\mathbf r_n):\varphi\in C_c^\infty(\R^{m_n})\}}
 ^{\,L^2(\mu)}
 =L^2(\mu).
\]
\end{proposition}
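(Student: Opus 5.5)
The plan is the standard martingale-convergence argument combined with finite-dimensional density. Set $\mathcal G_n:=\sigma(\mathbf r_n)=\sigma(r_a:a\in I_n)$. This is an increasing sequence of sub-$\sigma$-fields of $\mathcal X$, and $\bigvee_n\mathcal G_n=\sigma(r_a:a\in I)$ because $\bigcup_nI_n=I$. Let $\mathcal A$ denote the union of the spaces $\{\varphi(\mathbf r_n):\varphi\in C_c^\infty(\R^{m_n})\}$; since $I_n\subset I_{n+1}$, this union is increasing in $n$ (pad $\varphi$ with dummy coordinates, after a cutoff if one insists on compact support). The closure $\overline{\mathcal A}$ is therefore a closed linear subspace of $L^2(\mu)$. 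We will show that every $f\in L^2(\mu)$ orthogonal to $\mathcal A$ vanishes $\mu$-a.e.

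\textbf{Step 1 (each level).} Fix $n$ and put $\nu_n:=(\mathbf r_n)_\#\mu$, a finite Borel measure on $\R^{m_n}$. By Doob--Dynkin, every $\mathcal G_n$-measurable $L^2$ function has the form $\psi(\mathbf r_n)$ with $\psi\in L^2(\nu_n)$. Since $C_c^\infty(\R^{m_n})$ is dense in $L^2(\nu_n)$ for any finite Borel (hence Radon) measure on Euclidean space, we obtain
\[
L^2(X,\mathcal G_n,\mu)\subset\overline{\mathcal A}.
\]
Density follows by first approximating with $C_c$ functions (Lusin, or regularity of $\nu_n$) and then mollifying; uniform convergence with supports in a fixed compact set gives $L^2(\nu_n)$ convergence.

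\textbf{Step 2 (martingale limit).} Take $f\in L^2(\mu)$. By \eqref{eq:sigma-generation} there is a $\sigma(r_a:a\in I)$-measurable $\tilde f$ with $\tilde f=f$ $\mu$-a.e., so we may assume $f$ itself is $\sigma(r_a:a\in I)$-measurable. Put $f_n:=\Ex_\mu[f\mid\mathcal G_n]$. By the $L^2$ martingale convergence theorem (Lévy's upward theorem),
\[
f_n\to\Ex_\mu\bigl[f\mid\textstyle\bigvee_n\mathcal G_n\bigr]=f\quad\text{in }L^2(\mu).
\]
Each $f_n$ lies in $\overline{\mathcal A}$ by Step 1, and $\overline{\mathcal A}$ is closed, so $f\in\overline{\mathcal A}$.

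The only delicate point is the completion bookkeeping in Step 2, namely replacing $f$ by a version measurable with respect to the generated $\sigma$-field rather than its completion. Hypothesis \eqref{eq:sigma-generation} is exactly what provides this version, since any function measurable for a completed $\sigma$-field agrees a.e.\ with one measurable for the uncompleted $\sigma$-field. Everything else is routine. One could alternatively argue with a monotone class: the indicators $1_B$, $B\in\mathcal G_n$, lie in $\overline{\mathcal A}$, and $\bigcup_n\mathcal G_n$ is an algebra generating $\sigma(r_a:a\in I)$, so the approximation lemma for measures gives density of their span. The martingale route is shorter.
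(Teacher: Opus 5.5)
Your proposal is correct and is essentially the paper's proof: Doob--Dynkin plus density of $C_c^\infty(\R^{m_n})$ in $L^2((\mathbf r_n)_\#\mu)$ at each level, combined with $L^2$ martingale convergence along $\sigma(\mathbf r_n)$, with \eqref{eq:sigma-generation} supplying the identification of the limit. The orthogonality framing you announce at the start is never used, and the sets $\{\varphi(\mathbf r_n)\}$ are nested only after taking $L^2$-closures; your direct approximation argument needs neither, so neither is a gap.
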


\begin{proof}
Put $\mu_n=(\mathbf r_n)_\#\mu$.  Since $\mu_n$ is a probability measure on
$\R^{m_n}$, $C_c^\infty(\R^{m_n})$ is dense in $L^2(\mu_n)$.  Moreover,
\[
 \sigma(\mathbf r_1)\subset\sigma(\mathbf r_2)\subset\cdots,
 \qquad
 \bigvee_{n\ge1}\sigma(\mathbf r_n)=\sigma(r_a:a\in I).
\]
Hence martingale convergence and \eqref{eq:sigma-generation} give, for every
$F\in L^2(\mu)$,
\[
 \Ex_\mu[F\mid\sigma(\mathbf r_n)]\longrightarrow F
 \quad\text{in }L^2(\mu).
\]
Given $\varepsilon>0$, choose $n$ so that the preceding error is less than
$\varepsilon/2$.  By the Doob--Dynkin lemma,
\[
 \Ex_\mu[F\mid\sigma(\mathbf r_n)]=\psi_n(\mathbf r_n)
 \quad\mu\text{-a.e.}
\]
for some $\psi_n\in L^2(\mu_n)$.  Choose
$\varphi\in C_c^\infty(\R^{m_n})$ with
$\|\psi_n-\varphi\|_{L^2(\mu_n)}<\varepsilon/2$.  Then
\[
 \begin{aligned}
 \|F-\varphi(\mathbf r_n)\|_{L^2(\mu)}
 &\le
 \bigl\|F-\Ex_\mu[F\mid\sigma(\mathbf r_n)]\bigr\|_{L^2(\mu)}
 +\|\psi_n-\varphi\|_{L^2(\mu_n)}\\
 &<\varepsilon.
 \end{aligned}
\]
\end{proof}

Since $C_c^\infty(\R^{m_n})\subset C_b^\infty(\R^{m_n})$, the preceding
functions belong to $\Cyl$; in particular,
$\overline{\Cyl}^{\,L^2(\mu)}=L^2(\mu)$.  Density alone, however, does not
identify gradients attached to different presentations.

\subsection{Image divergence, presentation descent, and closure}

We now work under \cref{ass:intro-response}.  The first step is to derive on
the image space the integration-by-parts identity already anticipated in
\cref{thm:intro-image-closure}.

For $v\in V_0$ and $P\in\Pres$, pushforward by $J$, the response
compatibility in \cref{ass:intro-response}(iii), the ambient score identity,
and conditional expectation give
\begin{equation}
\begin{aligned}
 \int_X\partial_vF_P\,\dd\mu
 &=\Ex\bigl[(\partial_vF_P)(J)\bigr]
  =\Ex[\partial_vY_P]
  =\Ex[Y_P\beta_v]\\
 &=\Ex\!\left[F_P(J)\Ex[\beta_v\mid\sigma(J)]\right]
  =\int_XF_Pb_v\,\dd\mu.
\end{aligned}
\label{eq:image-log-derivative}
\end{equation}

\begin{proposition}
\label{prop:log-derivative}
Under \cref{ass:intro-response}, the expression in
\eqref{eq:intro-divergence-core} depends only on $U\in U_0$.  Hence it defines
a linear map $\Div_0:U_0\to L^2(\mu)$, and the integration-by-parts identity
\eqref{eq:intro-core-ibp} holds.
\end{proposition}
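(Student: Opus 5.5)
The plan is to show that for every presentation $U=\sum_{j}F_{P_j}v_j$ and every $P\in\Pres$,
\begin{equation*}
 \int_X F_P\,\Bigl(\sum_{j=1}^k\bigl(\partial_{v_j}F_{P_j}-F_{P_j}b_{v_j}\bigr)\Bigr)\,\dd\mu
 =-\int_X\langle G_P,U\rangle_{H_x}\,\dd\mu .
\end{equation*}
The right-hand side depends on $U$ only as an element of $L^2_\mu(H)$. Hence two presentations of $U$ will give sums whose difference is orthogonal to every $F_P$. Since $\Cyl$ is dense in $L^2(\mu)$ by \cref{ass:intro-response}(ii), that difference vanishes in $L^2(\mu)$, which is well-definedness. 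Linearity of $\Div_0$ then follows, because concatenating presentations is a presentation of the sum. Each summand lies in $L^2(\mu)$: $\partial_{v_j}F_{P_j}\in L^2(\mu)$ by (i), and $F_{P_j}b_{v_j}\in L^2(\mu)$ since $F_{P_j}$ is bounded and $b_{v_j}\in L^2(\mu)$ by conditional Jensen.

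By linearity in $j$, it suffices to prove the identity for a single term $F_Qv$ with $Q\in\Pres$, $v\in V_0$. Form the product presentation $PQ$. By the presentation algebra,
\[
F_{PQ}=F_PF_Q,\qquad G_{PQ}=F_PG_Q+F_QG_P .
\]
Pairing with $v_x$ gives $\partial_vF_{PQ}=F_P\,\partial_vF_Q+F_Q\,\partial_vF_P$ pointwise. Applying \eqref{eq:image-log-derivative} to the presentation $PQ$ yields
\[
\int_X\bigl(F_P\,\partial_vF_Q+F_Q\,\partial_vF_P\bigr)\dd\mu=\int_XF_PF_Q\,b_v\,\dd\mu .
\]
Rearranging, and using $F_Q\,\partial_vF_P=\langle G_P,F_Qv\rangle_{H_x}$, we get
\[
\int_XF_P\bigl(\partial_vF_Q-F_Qb_v\bigr)\dd\mu=-\int_X\langle G_P,F_Qv\rangle_{H_x}\,\dd\mu ,
\]
which is the single-term identity. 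Summing over $j$ gives \eqref{eq:intro-core-ibp}.

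The main thing to verify is integrability, so that every integral above is finite and the splitting is legitimate. Because $\varphi,\psi\in C_b^\infty$, both $F_P$ and $F_Q$ are bounded. By \cref{ass:intro-response}(i), $\partial_vF_P,\partial_vF_Q\in L^2(\mu)\subset L^1(\mu)$ and $g_a,v\in L^2_\mu(H)$, so $\langle G_P,F_Qv\rangle$ is integrable. Also $b_v\in L^2(\mu)$. One further point needs checking: \eqref{eq:image-log-derivative} requires \cref{ass:intro-response}(iii) for the presentation $PQ$. This holds because (iii) is assumed for all $P\in\Pres$, and products of presentations are presentations.
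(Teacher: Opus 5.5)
Your proposal is correct and follows essentially the same route as the paper: apply \eqref{eq:image-log-derivative} to the product presentation $PQ$, use the presentation-level product rule to get the single-term integration-by-parts identity, sum over the representation, and use the density of $\Cyl$ in $L^2(\mu)$ to conclude that the sum depends only on $U$. The only differences are that your $P$ and $Q$ are swapped relative to the paper's, and that you write out the integrability checks the paper leaves implicit.
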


\begin{proof}
Fix $P,Q\in\Pres$ and $v\in V_0$.  Apply
\eqref{eq:image-log-derivative} to the product presentation $PQ$.  The
presentation-level product rule gives
\[
\begin{aligned}
 \int_XF_P\partial_vF_Q\,\dd\mu
 &=\int_X\partial_vF_{PQ}\,\dd\mu
   -\int_XF_Q\partial_vF_P\,\dd\mu\\
 &=-\int_XF_Q\bigl(\partial_vF_P-F_Pb_v\bigr)\,\dd\mu.
\end{aligned}
\]
Therefore
\begin{equation}
 \langle G_Q,F_Pv\rangle_{L^2_\mu(H)}
 =-\langle F_Q,\partial_vF_P-F_Pb_v\rangle_{L^2(\mu)}.
\label{eq:ibp-simple-field}
\end{equation}
For a displayed representation
$U=\sum_{j=1}^kF_{P_j}v_j$, let
\[
 S_U:=\sum_{j=1}^k
 \bigl(\partial_{v_j}F_{P_j}-F_{P_j}b_{v_j}\bigr).
\]
Summing \eqref{eq:ibp-simple-field} gives
\[
 \langle G_Q,U\rangle_{L^2_\mu(H)}
 =-\langle F_Q,S_U\rangle_{L^2(\mu)},
 \qquad Q\in\Pres.
\]
If $U=0$, the density of $\Cyl$ from \cref{ass:intro-response}(ii) gives
$S_U=0$.  Applying this to the difference of two displayed representations
shows that $S_U$ depends only on $U$.  This proves the first assertion, and
the last display is exactly \eqref{eq:intro-core-ibp}.
\end{proof}

\begin{proposition}
\label{prop:closure}
Under \cref{ass:intro-response}, for every $P,Q\in\Pres$,
\[
 \|F_P-F_Q\|_{L^2(\mu)}=0
 \quad\Longrightarrow\quad
 \|G_P-G_Q\|_{L^2_\mu(H)}=0.
\]
Consequently, the assignment
\begin{equation}
 D_0F_P:=G_P
\label{eq:descended-gradient}
\end{equation}
defines a densely defined closable operator
\[
 D_0:\Cyl\subset L^2(\mu)\longrightarrow L^2_\mu(H).
\]
If $D:=\overline{D_0}$, then
\begin{equation}
 U_0\subset\Dom(D^*),\qquad
 D^*U=-\Div_0U\quad(U\in U_0).
\label{eq:adjoint-on-core}
\end{equation}
\end{proposition}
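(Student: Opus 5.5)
The plan is to derive everything from the integration-by-parts identity \eqref{eq:intro-core-ibp} of \cref{prop:log-derivative}, combined with the density statements in \cref{ass:intro-response}(ii).

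\emph{Presentation descent.} Let $P,Q\in\Pres$ with $F_P=F_Q$ $\mu$-a.e. For every $U\in U_0$, \eqref{eq:intro-core-ibp} applied to $P$ and to $Q$ gives
\[
 \langle G_P-G_Q,U\rangle_{L^2_\mu(H)}
 =-\langle F_P-F_Q,\Div_0U\rangle_{L^2(\mu)}=0 .
\]
Here $\Div_0U\in L^2(\mu)$ because $\partial_vF_P\in L^2(\mu)$ by \cref{ass:intro-response}(i). Moreover $F_Pb_v\in L^2(\mu)$, since $F_P$ is bounded and $b_v\in L^2(\mu)$ by conditional Jensen. I need $G_P-G_Q\in L^2_\mu(H)$. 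This holds because each $G_P$ is a finite sum of bounded coefficients $\partial_i\varphi(\cdots)$ times $g_{a_i}\in L^2_\mu(H)$. Since $U_0$ is dense in $L^2_\mu(H)$, we conclude $G_P=G_Q$ in $L^2_\mu(H)$.

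\emph{Well-definedness, linearity, density.} $D_0F:=G_P$ for any presentation $P$ of $F$ is therefore well defined on $\Cyl$. Linearity follows from the presentation-level identity $G_{\alpha P+\beta Q}=\alpha G_P+\beta G_Q$: the concatenated presentation represents $\alpha F_P+\beta F_Q$, so by descent its gradient is $D_0$ of that function. Dense domain is \cref{ass:intro-response}(ii).

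\emph{Closability and adjoint.} Identity \eqref{eq:intro-core-ibp} reads $\langle D_0F,U\rangle=\langle F,-\Div_0U\rangle$ for all $F\in\Cyl$, $U\in U_0$. Hence $U_0\subset\Dom(D_0^*)$ and $D_0^*U=-\Div_0U$. Since $U_0$ is dense in $L^2_\mu(H)$, $D_0^*$ is densely defined, and this is equivalent to closability of $D_0$.

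Concretely: let $F_n\to0$ in $L^2(\mu)$ with $D_0F_n\to\eta$. Then for each $U\in U_0$,
\[
 \langle\eta,U\rangle=\lim_n\langle F_n,-\Div_0U\rangle=0,
\]
so $\eta=0$. Finally, for $D=\overline{D_0}$ we have $D^*=D_0^*$ (the adjoint of the closure equals the adjoint), which gives \eqref{eq:adjoint-on-core}.

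The only delicate point is the integrability bookkeeping that makes every pairing finite, namely boundedness of $\varphi$ and its derivatives together with \cref{ass:intro-response}(i). I expect no real obstacle beyond invoking \cref{prop:log-derivative}.
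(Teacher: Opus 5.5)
Your proposal is correct and follows the paper's proof essentially step by step. You obtain presentation descent from the integration-by-parts identity \eqref{eq:intro-core-ibp} together with density of $U_0$, closability from the same pairing argument, and the adjoint identity from $(\overline{D_0})^*=D_0^*$; your integrability bookkeeping and the linearity check via concatenated presentations are harmless additions that the paper leaves implicit.
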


\begin{proof}
First suppose that $\|F_P\|_{L^2(\mu)}=0$.  By
\cref{prop:log-derivative},
\[
 \langle G_P,U\rangle_{L^2_\mu(H)}=0,
 \qquad U\in U_0.
\]
The density of $U_0$ in \cref{ass:intro-response}(ii) therefore gives
$\|G_P\|_{L^2_\mu(H)}=0$.  Applying the same argument to the difference of
two presentations yields
\[
 \|F_P-F_Q\|_{L^2(\mu)}=0
 \quad\Longrightarrow\quad
 \|G_P-G_Q\|_{L^2_\mu(H)}=0.
\]
Hence \eqref{eq:descended-gradient} is well defined on $\Cyl$.  Its domain is
dense by \cref{ass:intro-response}(ii).

For closability, let $F_n\in\Cyl$ satisfy
\[
 F_n\longrightarrow0\quad\text{in }L^2(\mu),\qquad
 D_0F_n\longrightarrow Z\quad\text{in }L^2_\mu(H).
\]
For $U\in U_0$, \cref{prop:log-derivative} gives
\[
 \langle Z,U\rangle_{L^2_\mu(H)}
 =-\lim_{n\to\infty}\langle F_n,\Div_0U\rangle_{L^2(\mu)}=0.
\]
Again density of $U_0$ yields $Z=0$.

Finally, for $F\in\Cyl$ and $U\in U_0$,
\[
 \langle D_0F,U\rangle_{L^2_\mu(H)}
 =\langle F,-\Div_0U\rangle_{L^2(\mu)}.
\]
Thus $U\in\Dom(D_0^*)$ and $D_0^*U=-\Div_0U$.  Since
$(\overline{D_0})^*=D_0^*$, \eqref{eq:adjoint-on-core} follows.
\end{proof}

We next pass the presentation-level chain and product rules to the closure.  Here
$C_b^1(\R)$ denotes the space of $C^1$ functions whose value and first
derivative are bounded.

\begin{proposition}
\label{prop:closed-calculus}
For $D=\overline{D_0}$, the following hold.
\begin{enumerate}[label=\textnormal{(\roman*)}]
\item If $F\in\Dom(D)$ and $\theta\in C_b^1(\R)$ has uniformly continuous
      derivative, then $\theta(F)\in\Dom(D)$ and
      \begin{equation}
       D\theta(F)=\theta'(F)DF.
      \label{eq:closed-chain}
      \end{equation}
\item If $F,G\in\Dom(D)\cap L^\infty(\mu)$, then $FG\in\Dom(D)$ and
      \begin{equation}
       D(FG)=F\,DG+G\,DF.
      \label{eq:closed-product}
      \end{equation}
\end{enumerate}
\end{proposition}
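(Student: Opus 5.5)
The plan is to prove both rules first on $\Cyl$, where they are presentation-level identities, and then pass to $\Dom(D)$ by approximation, using closedness of $D$ together with the presentation descent of \cref{prop:closure}.

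\emph{Chain rule on the core.} Take $F=F_P$ with $P=(\varphi;a_1,\ldots,a_n)$. When $\theta\in C_b^\infty(\R)$, the function $\theta\circ\varphi$ lies in $C_b^\infty(\R^n)$. Hence $\theta(F_P)=F_{(\theta\circ\varphi;a_1,\ldots,a_n)}$, and the finite-dimensional chain rule gives $G_{(\theta\circ\varphi;\cdot)}=\theta'(F_P)G_P$. By \cref{prop:closure} this identifies $D_0\theta(F_P)=\theta'(F_P)D_0F_P$, independently of the presentation.

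For general $\theta\in C_b^1(\R)$ with uniformly continuous derivative, mollify: $\theta_k:=\theta*\rho_{1/k}$. Then $\theta_k\in C_b^\infty$, and $\theta_k\to\theta$ and $\theta_k'\to\theta'$ uniformly; uniform convergence of $\theta_k'$ is exactly where uniform continuity of $\theta'$ is used. In addition, $\sup_k\|\theta_k'\|_\infty\le\|\theta'\|_\infty$. Consequently $\theta_k(F_P)\to\theta(F_P)$ in $L^2(\mu)$ and $\theta_k'(F_P)G_P\to\theta'(F_P)G_P$ in $L^2_\mu(H)$. Closedness then gives $\theta(F_P)\in\Dom(D)$ together with \eqref{eq:closed-chain} on $\Cyl$.

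\emph{Chain rule on $\Dom(D)$.} Take $F\in\Dom(D)$ and choose $F_n\in\Cyl$ with $F_n\to F$ in $L^2(\mu)$ and $D_0F_n\to DF$ in $L^2_\mu(H)$. Since $\theta$ is Lipschitz, $\theta(F_n)\to\theta(F)$ in $L^2(\mu)$. For the gradients, write
\[
 \theta'(F_n)DF_n-\theta'(F)DF
 =\theta'(F_n)(DF_n-DF)+(\theta'(F_n)-\theta'(F))DF .
\]
The first term tends to zero because $\theta'$ is bounded. For the second, pass to a subsequence with $F_n\to F$ $\mu$-a.e.; continuity of $\theta'$ and dominated convergence, with dominating function $4\|\theta'\|_\infty^2\|DF\|^2$, give convergence to zero. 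Closedness of $D$ then yields (i).

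\emph{Product rule.} On the core this is the presentation identity $G_{PQ}=F_PG_Q+F_QG_P$ descended through \cref{prop:closure}. For $F,G\in\Dom(D)\cap L^\infty(\mu)$, fix $M$ larger than both $\|F\|_\infty$ and $\|G\|_\infty$. Choose $\theta\in C_b^\infty(\R)$ with $\theta(t)=t$ on $[-M,M]$; note that $\theta\circ\varphi\in C_b^\infty$, so everything stays in $\Cyl$.

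Take core approximants $F_n\to F$ and $G_n\to G$ in graph norm, and replace them by $\tilde F_n:=\theta(F_n)$ and $\tilde G_n:=\theta(G_n)$. These remain in $\Cyl$ and are uniformly bounded by $\|\theta\|_\infty$. The argument used for (i) shows $\tilde F_n\to\theta(F)=F$ in graph norm, with $D\tilde F_n=\theta'(F_n)DF_n\to\theta'(F)DF=DF$, since $\theta'(F)=1$ a.e.; the same holds for $\tilde G_n$.

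Then $\tilde F_n\tilde G_n\to FG$ in $L^2(\mu)$, by uniform boundedness and a.e. convergence along a subsequence. For the gradients,
\[
 \tilde F_nD\tilde G_n+\tilde G_nD\tilde F_n\to F\,DG+G\,DF
\]
in $L^2_\mu(H)$, by the same bounded-times-convergent splitting combined with dominated convergence. Closedness gives (ii).

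\emph{Main obstacle.} The delicate step is the passage from the core to $\Dom(D)$. The coefficient $\theta'(F_n)$, or the factor $\tilde F_n$, converges only in measure, not uniformly, so the argument must work along a.e.-convergent subsequences with dominated convergence against the fixed limit $DF$. Since $D$ is closed, identifying the limit along a subsequence suffices.
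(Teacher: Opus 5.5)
Your proposal is correct and follows the paper's proof essentially step for step. The chain rule is proved on $\Cyl$ by mollifying $\theta$ and using closedness, then extended to $\Dom(D)$ with graph-norm approximants, an a.e.-convergent subsequence and dominated convergence; the product rule uses a smooth truncation equal to the identity on $[-M,M]$ applied to core approximants (where you should take one common subsequence along which both approximant sequences converge a.e., as the paper does).
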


\begin{proof}
Let $F\in\Cyl$.  Since $\theta'$ is uniformly continuous and
$\theta$ is Lipschitz, mollification gives $\theta_m\in C_b^\infty(\R)$ with
\[
 \|\theta_m-\theta\|_\infty
 +\|\theta_m'-\theta'\|_\infty\longrightarrow0.
\]
The presentation-level chain rule gives
\[
 D_0\theta_m(F)=\theta_m'(F)D_0F.
\]
Hence
\[
 \theta_m(F)\longrightarrow\theta(F)\quad\text{in }L^2(\mu),
 \qquad
 D_0\theta_m(F)\longrightarrow\theta'(F)D_0F
 \quad\text{in }L^2_\mu(H),
\]
and closedness of $D$ gives \eqref{eq:closed-chain} for $F\in\Cyl$.

Now let $F_n\in\Cyl$ converge to $F\in\Dom(D)$ in the graph norm of $D$.
Passing to a subsequence, assume $F_n\to F$ $\mu$-almost everywhere.  Since
$\theta$ is Lipschitz,
\[
 \|\theta(F_n)-\theta(F)\|_{L^2(\mu)}
 \le \|\theta'\|_\infty\|F_n-F\|_{L^2(\mu)}\longrightarrow0.
\]
Moreover,
\[
 \begin{aligned}
 D\theta(F_n)-\theta'(F)DF
 &=\theta'(F_n)(D_0F_n-DF)\\
 &\quad +(\theta'(F_n)-\theta'(F))DF.
 \end{aligned}
\]
The first term tends to zero in $L^2_\mu(H)$ because $\theta'$ is bounded,
and the second by dominated convergence.  Closedness proves
\eqref{eq:closed-chain}.

For the product rule, choose
\[
 M>\max\{\|F\|_\infty,\|G\|_\infty\}
\]
and $\tau\in C_b^\infty(\R)$ with $\tau(t)=t$ on $[-M,M]$.  Choose
$A_n,B_n\in\Cyl$ converging to $F,G$, respectively, in graph norm, and pass
to a common subsequence such that $A_n\to F$ and $B_n\to G$ almost
everywhere.  Put
\[
 F_n:=\tau(A_n),\qquad G_n:=\tau(B_n).
\]
Then $F_n,G_n\in\Cyl$ and
\[
 \|F_n\|_\infty,\|G_n\|_\infty\le\|\tau\|_\infty.
\]
The chain rule on cylinders gives
\[
 D_0F_n=\tau'(A_n)D_0A_n,
 \qquad
 D_0G_n=\tau'(B_n)D_0B_n.
\]
Since $\tau(F)=F$, $\tau'(F)=1$, and likewise for $G$,
\[
 F_n\longrightarrow F,
 \qquad D_0F_n\longrightarrow DF,
 \qquad
 G_n\longrightarrow G,
 \qquad D_0G_n\longrightarrow DG
\]
in the corresponding $L^2$ spaces.  For instance,
\[
 D_0F_n-DF
 =\tau'(A_n)(D_0A_n-DF)+(\tau'(A_n)-1)DF,
\]
where the first term tends to zero by graph convergence and the second by
dominated convergence.

The uniform bounds imply
\[
 F_nG_n\longrightarrow FG\quad\text{in }L^2(\mu).
\]
Using the presentation-level product rule,
\[
 \begin{aligned}
 D_0(F_nG_n)-(F\,DG+G\,DF)
 &=F_n(D_0G_n-DG)+(F_n-F)DG\\
 &\quad+G_n(D_0F_n-DF)+(G_n-G)DF.
 \end{aligned}
\]
The first and third terms tend to zero by the uniform bounds and graph
convergence; the second and fourth by dominated convergence.  Closedness of
$D$ gives \eqref{eq:closed-product}.
\end{proof}

A symmetric nonnegative form $(\cE,\mathcal D)$ on $L^2(\mu)$ is called
\emph{Markovian} if, for every normal contraction $C:\R\to\R$, that is,
\[
 C(0)=0,\qquad |C(s)-C(t)|\le |s-t|\quad(s,t\in\R),
\]
and every $F\in\mathcal D$,
\[
 C(F)\in\mathcal D,\qquad
 \cE(C(F),C(F))\le \cE(F,F).
\]

\begin{corollary}
\label{cor:closed-energy}
For $D=\overline{D_0}$, define
\begin{equation}
 \cE(F,G)
 :=\int_X\langle DF(x),DG(x)\rangle_{H_x}\,\dd\mu(x),
 \qquad F,G\in\Dom(D).
\label{eq:response-form}
\end{equation}
Then $(\cE,\Dom(D))$ is a densely defined closed symmetric Markov form on
$L^2(\mu)$.
\end{corollary}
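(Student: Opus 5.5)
The corollary follows from \cref{prop:closure} and \cref{prop:closed-calculus}. I will check the four properties separately, and only the Markov property needs any real argument.

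\emph{Symmetry, nonnegativity, density, closedness.}
\begin{itemize}
\item Symmetry and nonnegativity are immediate from \eqref{eq:response-form}, since $\langle\cdot,\cdot\rangle_{H_x}$ is a real inner product.
\item The domain $\Dom(D)\supset\Cyl$ is dense in $L^2(\mu)$ by \cref{ass:intro-response}(ii).
\item For closedness, the norm $\cE_1(F,F)^{1/2}=(\|F\|_{L^2(\mu)}^2+\|DF\|_{L^2_\mu(H)}^2)^{1/2}$ is exactly the graph norm of $D$. The operator $D=\overline{D_0}$ is closed by \cref{prop:closure}, so $(\Dom(D),\cE_1)$ is complete, which is the definition of a closed form.
\end{itemize}

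\emph{Markov property.} Let $C$ be a normal contraction and $F\in\Dom(D)$. I would regularize $C$ so that \cref{prop:closed-calculus}(i) applies.

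First, mollify $C$ and subtract the value at $0$. This gives $\theta_\varepsilon:=C*\rho_\varepsilon-(C*\rho_\varepsilon)(0)$, which is smooth with $\theta_\varepsilon(0)=0$ and $|\theta_\varepsilon'|\le1$. Moreover $\theta_\varepsilon\to C$ locally uniformly, hence pointwise.

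Second, $\theta_\varepsilon$ need not be bounded, and $\theta_\varepsilon'$ need not be uniformly continuous. To fix both, I would compose with a smooth odd truncation $\tau_k$ satisfying
\[
\tau_k(t)=t \text{ on } [-k,k],\qquad |\tau_k'|\le1,\qquad \tau_k \text{ bounded}.
\]
Set $\theta_{\varepsilon,k}:=\tau_k\circ\theta_\varepsilon$. This function lies in $C_b^1(\R)$ and has a uniformly continuous derivative: $\theta_\varepsilon$ is $1$-Lipschitz and $\tau_k'$ is constant outside a compact set, and on the relevant compact range the derivatives are uniformly continuous.

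Applying \eqref{eq:closed-chain} to $\theta_{\varepsilon,k}$ gives $\theta_{\varepsilon,k}(F)\in\Dom(D)$ with
\[
D\theta_{\varepsilon,k}(F)=\theta_{\varepsilon,k}'(F)\,DF .
\]
Since $|\theta_{\varepsilon,k}'|\le1$, this yields $\cE(\theta_{\varepsilon,k}(F))\le\cE(F)$.

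\emph{Passing to the limit.} This is the one delicate step. From $|\theta_{\varepsilon,k}(s)|\le|s|$ and pointwise convergence, dominated convergence gives
\[
\theta_{\varepsilon,k}(F)\to C(F)\quad\text{in } L^2(\mu)
\]
along a diagonal sequence $\varepsilon\to0$, $k\to\infty$. Along this sequence the values $\cE(\theta_{\varepsilon,k}(F))$ stay bounded by $\cE(F)$, so the sequence is bounded in the graph norm. The graph of $D$ is a closed subspace of a Hilbert space, hence weakly closed, so a subsequence converges weakly in the graph norm and its limit is $(C(F),DC(F))$. Therefore $C(F)\in\Dom(D)$. Finally, weak lower semicontinuity of the norm gives
\[
\cE(C(F),C(F))\le\liminf\cE(\theta_{\varepsilon,k}(F))\le\cE(F,F),
\]
which completes the Markov property.
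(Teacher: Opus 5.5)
Your proposal is correct and follows essentially the paper's route. The paper truncates by $T_j$ before mollifying, which lands directly in $C_b^\infty(\R)$, whereas you mollify first and then compose with a smooth truncation; both then apply \eqref{eq:closed-chain} to get $\cE(\theta(F),\theta(F))\le\cE(F,F)$ and pass to the limit by lower semicontinuity of the closed form, which you spell out via weak compactness of the graph of $D$. One small correction: your claim that $\theta_\varepsilon'$ ``need not be uniformly continuous'' is false, since it is even Lipschitz ($\theta_\varepsilon''=C'*\rho_\varepsilon'$ is bounded by $\|\rho_\varepsilon'\|_{L^1}$), and this fact, rather than your ``compact range'' remark (which does not control $\theta_\varepsilon'$ on all of $\R$, e.g.\ when $C$ is bounded), is what makes $\theta_{\varepsilon,k}'$ uniformly continuous.
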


\begin{proof}
Since $D$ is closed,
\[
 \|F\|_{L^2(\mu)}^2+\cE(F,F)
 =\|F\|_{L^2(\mu)}^2+\|DF\|_{L^2_\mu(H)}^2
\]
is a complete norm on $\Dom(D)$.  The domain is dense by
\cref{prop:closure}, while symmetry and nonnegativity follow directly from
\eqref{eq:response-form}.

It remains to prove that $\cE$ is Markovian.  Let $C$ be a normal
contraction as above.  For $T_j(t)=(-j)\vee(t\wedge j)$, choose a smooth
compactly supported probability density $\rho$ and set
\[
 \rho_\varepsilon(t):=\varepsilon^{-1}\rho(t/\varepsilon),\qquad
 C_j:=(T_j\circ C)*\rho_{\varepsilon_j}
   -\bigl((T_j\circ C)*\rho_{\varepsilon_j}\bigr)(0),
 \qquad \varepsilon_j\downarrow0.
\]
Then $C_j\in C_b^\infty(\R)$, $C_j(0)=0$, $|C_j'|\le1$,
$C_j(t)\to C(t)$ for every $t$, and $|C_j(t)|\le |t|$.  \Cref{prop:closed-calculus} gives
\[
 \cE(C_j(F),C_j(F))
 =\int_X|C_j'(F)|^2\|DF\|_{H_x}^2\,\dd\mu
 \le \cE(F,F).
\]
Dominated convergence gives $C_j(F)\to C(F)$ in $L^2(\mu)$, and the lower
semicontinuity of the closed form $\cE$ gives
\[
 C(F)\in\Dom(D),\qquad
 \cE(C(F),C(F))\le\cE(F,F),
\]
which proves the Markov property.
\end{proof}

\medskip
\noindent\textbf{Proof of Theorem~\ref{thm:intro-image-closure}.}
\Cref{prop:log-derivative} proves part~(i), \cref{prop:closure} proves
part~(ii), and \cref{cor:closed-energy}, using \cref{prop:closed-calculus},
proves part~(iii).

\section{The LQG metric}
\label{sec:lqg}

We first derive the response pathwise from Weyl-reweighted geodesics.  We then
reconstruct the response measurably from the projective metric, close the
resulting image-law calculus, and transfer Gaussian energy-image-density
absolute continuity to it.  The remaining subsections establish forest
nondegeneracy and prove the joint-density theorem.  Each stage is separated
below so that the pathwise, measure-theoretic, and geometric inputs remain
distinct.

\subsection{Pathwise Weyl response and geodesic occupations}

Let $M$ be a set.  Throughout this subsection, a path means a continuous map
from a compact interval into $M$.

\begin{definition*}
For a metric $G$ on $M$ and a path $P:[a,b]\to M$, define
\[
 \len(P;G):=
 \sup_{a=t_0<\cdots<t_n=b}
 \sum_{i=1}^n G\bigl(P(t_{i-1}),P(t_i)\bigr).
\]
We call $P$ $G$-rectifiable if $\len(P;G)<\infty$.  For $z\in M$ and $r>0$,
write
\[
 B_G(z,r):=\{w\in M:G(z,w)<r\},
 \qquad
 \overline B_G(z,r):=\{w\in M:G(z,w)\le r\}.
\]
We call $G$ proper if $\overline B_G(z,r)$ is compact for every $z\in M$ and
$r>0$.
\end{definition*}

\begin{definition*}
For $x,y\in M$, we call $\gamma\in C([0,1],M)$ a $G$-geodesic from $x$ to
$y$ if
\[
 \gamma(0)=x,\qquad \gamma(1)=y,\qquad
 G(\gamma(s),\gamma(t))=G(x,y)|s-t|,
 \quad s,t\in[0,1].
\]
We call $G$ geodesic if every $x,y\in M$ can be joined by a $G$-geodesic.
\end{definition*}

Let $D$ be a proper geodesic metric on $M$.  Write $C_b(M)$ for the bounded
continuous real-valued functions on $M$ and $\mathcal B(M)$ for the Borel
$\sigma$-field, and fix $f\in C_b(M)$.
For a $D$-rectifiable path $P$, put
\[
 L_P:=\len(P;D).
\]
If $P$ is nonconstant, let
$\widehat P:[0,L_P]\to M$ be its $D$-arclength parametrization and define
\[
 \pi_P(A):=\frac1{L_P}\int_0^{L_P}\mathbf1_A(\widehat P(s))\,\dd s,
 \qquad A\in\mathcal B(M).
\]
For $t\in\R$, set $\ell_t^f(P):=0$ when $P$ is constant, and otherwise define
\[
 \ell_t^f(P):=\int_0^{L_P}e^{tf(\widehat P(s))}\,\dd s.
\]
For $x,y\in M$, define the weighted path distance
\begin{equation}
 D_t^f(x,y):=\inf_{P:x\to y}\ell_t^f(P),
\label{eq:weyl-metric}
\end{equation}
where the infimum is over $D$-rectifiable paths from $x$ to $y$.  If a
nonconstant $P$ is parameterized at constant $D$-speed on $[0,1]$, then
\[
 \pi_P=P_\#(\mathcal L^1|_{[0,1]}),
 \qquad
 \ell_t^f(P)=L_P\int_0^1e^{tf(P(u))}\,\dd u.
\]

\begin{lemma}
\label{lem:weighted-length-identity}
With the preceding definitions, for every $t\in\R$ the following hold.
\begin{enumerate}[label=\textnormal{(\roman*)}]
\item $D_t^f$ is a metric on $M$ and
\[
 e^{-|t|\|f\|_\infty}D
 \le D_t^f
 \le e^{|t|\|f\|_\infty}D.
\]
\item For every nonconstant $D$-rectifiable path $P$,
\[
 \len(P;D_t^f)=\ell_t^f(P).
\]
Consequently, $D_t^f$ is a length metric.
\item $D_t^f$ is proper and geodesic.  Moreover, if $x\ne y$ are points of
$M$ and $P$ is a $D_t^f$-geodesic from $x$ to $y$, then $P$ is
$D$-rectifiable and
\[
 D_t^f(x,y)=\len(P;D_t^f)=\ell_t^f(P).
\]
\end{enumerate}
\end{lemma}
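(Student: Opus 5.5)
The proof proceeds in three stages, matching items (i)–(iii). Write $c:=|t|\|f\|_\infty$.

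\emph{Item (i).} For any $D$-rectifiable $P$ from $x$ to $y$ we have $e^{-c}L_P\le\ell_t^f(P)\le e^{c}L_P$. Taking infima over $P$ and using that $D$ is a geodesic (hence length) metric, so that $\inf_P L_P=D(x,y)$, gives
\[
 e^{-c}D\le D_t^f\le e^{c}D .
\]
Symmetry follows by reversing paths, since $\ell_t^f$ is invariant under reversal. The triangle inequality follows by concatenating paths, because $\ell_t^f$ is additive under concatenation. Positivity for $x\ne y$ comes from the lower bound.

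\emph{Item (ii).} Fix a nonconstant rectifiable $P$ with arclength parametrization $\widehat P$.

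First, $\len(P;D_t^f)\le\ell_t^f(P)$. For any partition, each term $D_t^f(P(t_{i-1}),P(t_i))$ is at most $\ell_t^f$ of the corresponding subarc, and these sum to $\ell_t^f(P)$.

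For the reverse inequality, I use uniform continuity of $f\circ\widehat P$ on the compact interval $[0,L_P]$. Given $\varepsilon>0$, choose a fine arclength partition $0=s_0<\dots<s_n=L_P$ on which $f\circ\widehat P$ oscillates by less than $\varepsilon$ on each piece. I then need a lower bound on $D_t^f(\widehat P(s_{i-1}),\widehat P(s_i))$.

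This step is the main obstacle. The optimal competitor path need not stay near $P$, so control of $f$ along $P$ says nothing directly about $f$ along the competitor. The fix is to use continuity of $f$ on $M$ rather than just along $P$. Since $P([0,1])$ is compact and $f$ is continuous, there is $\delta>0$ such that $|f(z)-f(\widehat P(s))|<\varepsilon$ whenever $D(z,\widehat P(s))<\delta$; this is a Lebesgue-number-type argument on the compact image. Now take the pieces of $D$-length much smaller than $\delta e^{-2c}$. Any competitor path between the endpoints of a piece either stays in the $\delta$-neighbourhood of its initial endpoint, where the weight is at least $e^{tf(\widehat P(s_{i-1}))-|t|\varepsilon}$ up to the oscillation, or it exits that neighbourhood, in which case $\ell_t^f\ge e^{-c}\delta$, which exceeds the cost of the piece itself.

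Either way,
\[
 D_t^f(\widehat P(s_{i-1}),\widehat P(s_i))\ \ge\ e^{-2|t|\varepsilon}\,e^{tf(\widehat P(s_{i-1}))}\,D(\widehat P(s_{i-1}),\widehat P(s_i)).
\]
The factor $D(\widehat P(s_{i-1}),\widehat P(s_i))$ is not the arclength $s_i-s_{i-1}$. To handle this, refine the partition further so that $\sum_i D(\widehat P(s_{i-1}),\widehat P(s_i))\ge L_P-\varepsilon$, which is possible by the definition of length. Combined with the bounded weights, the sum over pieces approaches the Riemann sum for $\ell_t^f(P)$. Letting $\varepsilon\to0$ gives $\len(P;D_t^f)\ge\ell_t^f(P)$.

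The length-metric claim then follows: by definition $D_t^f=\inf_P\ell_t^f(P)=\inf_P\len(P;D_t^f)$ over $D$-rectifiable paths. These are exactly the $D_t^f$-rectifiable paths, since the two metrics are bi-Lipschitz by (i).

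\emph{Item (iii).} By (i), $D_t^f$-closed balls are closed subsets of $D$-closed balls of radius $e^{c}r$. They are therefore compact, so $D_t^f$ is proper. The topologies of $D$ and $D_t^f$ agree, so $M$ is a proper length space under $D_t^f$, and the Hopf–Rinow/Arzelà–Ascoli argument gives geodesics. Concretely, take $D_t^f$-length-minimizing sequences parametrized proportionally to $D_t^f$-length; they are equi-Lipschitz into a compact ball, so a subsequence converges uniformly. Lower semicontinuity of length then shows the limit realizes the distance, and reparametrizing gives the constant-speed geodesic property.

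Finally, let $P$ be a $D_t^f$-geodesic from $x$ to $y$. It is $D_t^f$-Lipschitz, so its $D_t^f$-length equals $D_t^f(x,y)<\infty$. By (i) it is also $D$-rectifiable, and it is nonconstant since $x\ne y$. Hence (ii) gives
\[
 D_t^f(x,y)=\len(P;D_t^f)=\ell_t^f(P).
\]
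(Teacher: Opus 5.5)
Your proof is correct. Items (i) and (iii) follow the paper essentially verbatim: bi-Lipschitz comparison, reversal and concatenation, ball inclusion for properness, and Hopf--Rinow (which you unpack into the Arzel\`a--Ascoli and lower-semicontinuity argument rather than citing).

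The genuine difference is in (ii). The paper first proves a pointwise infinitesimal statement, $\lim_{z'\to z}D_t^f(z,z')/D(z,z')=e^{tf(z)}$. It uses the same exit dichotomy you use: a competitor either stays in a small $D$-ball where the weight is nearly $e^{tf(z)}$, or pays at least $e^{-|t|\|f\|_\infty}r$ to leave it. It then invokes the metric-speed theorem from Burago--Burago--Ivanov: $|\dot{\widehat P}|_D=1$ a.e., hence $|\dot{\widehat P}|_{D_t^f}=e^{tf(\widehat P)}$ a.e., and length is the integral of metric speed. This yields both inequalities at once.

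You proceed differently in three ways:
\begin{itemize}
\item You make the exit estimate uniform along the compact image $P([0,1])$, via uniform continuity of $f$ on a $D$-neighbourhood of it.
\item You obtain $\len(P;D_t^f)\le\ell_t^f(P)$ separately, from additivity of $\ell_t^f$ over subarcs.
\item You obtain the reverse inequality by a Riemann-sum comparison. The chord-versus-arc discrepancy is handled by refining until $\sum_iD(\widehat P(s_{i-1}),\widehat P(s_i))\ge L_P-\varepsilon$. This is compatible with your other requirements, since refinement only increases that sum. The resulting error is at most $e^{|t|\|f\|_\infty}\varepsilon$, because each chord is at most its arc.
\end{itemize}

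Your route is fully elementary and self-contained, at the cost of the uniformity and refinement bookkeeping. The paper's route is shorter and needs only pointwise continuity of $f$, but relies on the metric-derivative theory for Lipschitz curves.
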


\begin{proof}
Fix $t\in\R$ and set
\[
 w_t:=e^{tf},\qquad
 a_t:=e^{-|t|\|f\|_\infty},\qquad
 b_t:=e^{|t|\|f\|_\infty}.
\]
\smallskip
\noindent\emph{Step 1: comparison of the two metrics.}
Then $a_t\le w_t\le b_t$, so every $D$-rectifiable path $Q$ satisfies
\[
 a_t\len(Q;D)
 \le \ell_t^f(Q)
 \le b_t\len(Q;D).
\]
Taking the infimum over paths, and using a $D$-geodesic for the upper
bound, gives
\[
 a_tD(z,z')\le D_t^f(z,z')\le b_tD(z,z'),
 \qquad z,z'\in M.
\]
Together with reversal and concatenation of paths, this shows that
$D_t^f$ is a metric bi-Lipschitz equivalent to $D$, proving (i).

\smallskip
\noindent\emph{Step 2: local metric density and the length identity.}
We next identify the local metric density.  Fix $z\in M$ and
$0<\varepsilon<w_t(z)$.  By continuity of $w_t$, choose $r>0$ such that
\[
 |w_t(u)-w_t(z)|<\varepsilon
 \qquad\bigl(u\in B_D(z,r)\bigr).
\]
Suppose that
\[
 0<D(z,z')
 <\min\left\{r,\frac{a_tr}{w_t(z)-\varepsilon}\right\}.
\]
If a $D$-rectifiable path $Q$ from $z$ to $z'$ is contained in
$B_D(z,r)$, then
\[
 \ell_t^f(Q)
 \ge \bigl(w_t(z)-\varepsilon\bigr)\len(Q;D)
 \ge \bigl(w_t(z)-\varepsilon\bigr)D(z,z').
\]
If $Q$ leaves $B_D(z,r)$, its initial segment up to the first exit has
$D$-length at least $r$, and therefore
\[
 \ell_t^f(Q)
 \ge a_tr
 >\bigl(w_t(z)-\varepsilon\bigr)D(z,z').
\]
Taking the infimum over $Q$, and testing the reverse inequality with a
$D$-geodesic from $z$ to $z'$, whose image is contained in $B_D(z,r)$,
yields
\[
 \bigl(w_t(z)-\varepsilon\bigr)D(z,z')
 \le D_t^f(z,z')
 \le \bigl(w_t(z)+\varepsilon\bigr)D(z,z').
\]
Consequently,
\[
 \lim_{\substack{z'\to z\\ z'\ne z}}
 \frac{D_t^f(z,z')}{D(z,z')}
 =w_t(z).
\]

Fix a nonconstant $D$-rectifiable path $P$ and let
$\widehat P:[0,L_P]\to M$ be its $D$-arclength parametrization.  For a metric
$G$, write
\[
 |\dot{\widehat P}|_G(s)
 :=\lim_{u\to s}
 \frac{G(\widehat P(u),\widehat P(s))}{|u-s|}
\]
whenever the limit exists.  The bi-Lipschitz comparison makes $\widehat P$
Lipschitz for both $D$ and $D_t^f$.  The metric-speed theorem
\cite[Chapter~2]{BuragoBuragoIvanov2001} gives
$|\dot{\widehat P}|_D=1$ for almost every $s\in(0,L_P)$.  At each such
$s$, the preceding local limit gives
\[
 \begin{aligned}
 |\dot{\widehat P}|_{D_t^f}(s)
 &=\lim_{u\to s}
   \frac{D_t^f(\widehat P(u),\widehat P(s))}
        {D(\widehat P(u),\widehat P(s))}
   \frac{D(\widehat P(u),\widehat P(s))}{|u-s|}\\
 &=w_t(\widehat P(s)).
 \end{aligned}
\]
Using the metric-speed formula for length, we obtain
\[
 \begin{aligned}
 \len(P;D_t^f)
 &=\int_0^{L_P}|\dot{\widehat P}|_{D_t^f}(s)\,\dd s\\
 &=\int_0^{L_P}e^{tf(\widehat P(s))}\,\dd s
 =\ell_t^f(P).
 \end{aligned}
\]

The bi-Lipschitz comparison shows that a path is $D_t^f$-rectifiable if
and only if it is $D$-rectifiable.  Hence the length identity and the
definition of $D_t^f$ give
\[
 D_t^f(x,y)
 =\inf_{P:x\to y}\len(P;D_t^f),
\]
so $D_t^f$ is a length metric.  This proves (ii).

\smallskip
\noindent\emph{Step 3: properness and existence of geodesics.}
The two metrics induce the same topology, and, for $R>0$,
\[
 \overline B_{D_t^f}(x,R)
 \subseteq \overline B_D(x,a_t^{-1}R).
\]
The set on the left is $D$-closed, while the set on the right is compact.
Thus $D_t^f$ is proper.  The Hopf--Rinow theorem for length spaces
\cite[Section~2.5]{BuragoBuragoIvanov2001} implies that $D_t^f$ is geodesic.
Finally, if $P$ is a $D_t^f$-geodesic
between distinct points, the bi-Lipschitz comparison makes $P$
$D$-rectifiable, and (ii) gives
\[
 D_t^f(x,y)=\len(P;D_t^f)=\ell_t^f(P).
\]
This proves (iii).
\end{proof}

When $f$ is fixed, write $D_t:=D_t^f$.  For $x\ne y$, write
$L:=D(x,y)$ and $\Geo_D(x,y)$ for the set of $D$-geodesics from $x$ to $y$.
Every $\gamma\in\Geo_D(x,y)$ is $L$-Lipschitz and has image in the compact set
$\overline B_D(x,L)$.  Since the defining geodesic identity is preserved under
uniform limits, the Arzel\`a--Ascoli theorem shows that $\Geo_D(x,y)$ is compact
in the uniform topology.  Moreover, if $\gamma_n\to\gamma$ uniformly, then all
the images lie in a common compact set and hence
\[
 \int_M f\,\dd\pi_{\gamma_n}
 =\int_0^1 f(\gamma_n(u))\,\dd u
 \longrightarrow
 \int_0^1 f(\gamma(u))\,\dd u
 =\int_M f\,\dd\pi_\gamma.
\]
Thus $\gamma\mapsto\int_M f\,\dd\pi_\gamma$ is continuous on
$\Geo_D(x,y)$.
Let $t_n\to0$.  For each $n$, by \cref{lem:weighted-length-identity}(iii),
choose a $D_{t_n}$-geodesic from $x$ to $y$ and reparameterize it at constant
$D$-speed on $[0,1]$; denote the resulting path by $P_n$ and set
\[
 L_n:=\len(P_n;D).
\]
Since metric length is invariant under reparameterization,
\cref{lem:weighted-length-identity}(ii)--(iii) give
\[
 D_{t_n}(x,y)
 =\len(P_n;D_{t_n})
 =\ell_{t_n}^f(P_n).
\]

\begin{lemma}
\label{lem:weyl-compactness}
The following hold.
\begin{enumerate}[label=\textnormal{(\roman*)}]
\item $L_n\to L$.
\item $(P_n)$ is relatively compact in $C([0,1],M)$, and every uniform limit
      belongs to $\Geo_D(x,y)$.
\item If $P_{n_j}\to\gamma$ uniformly, then
\begin{equation}
 \pi_{P_{n_j}}\Longrightarrow\pi_\gamma.
\label{eq:occupation-convergence}
\end{equation}
\end{enumerate}
\end{lemma}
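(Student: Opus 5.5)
The argument rests on the bi-Lipschitz comparison of \cref{lem:weighted-length-identity}(i), which degenerates to equality as $t_n\to0$. For (i), write $a_n:=e^{-|t_n|\|f\|_\infty}$ and $b_n:=e^{|t_n|\|f\|_\infty}$, both tending to $1$. Since $P_n$ is $D$-rectifiable from $x$ to $y$, we have $L_n\ge D(x,y)=L$. In the other direction,
\[
 a_nL_n\le \ell_{t_n}^f(P_n)=D_{t_n}(x,y)\le b_nL,
\]
so $L\le L_n\le (b_n/a_n)L\to L$. In particular $L_n>0$, so each $P_n$ is nonconstant and its constant-speed reparametrization makes sense.

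For (ii), the constant $D$-speed parametrization gives
\[
 D(P_n(s),P_n(u))\le L_n|s-u|,
\]
so the $P_n$ are uniformly Lipschitz because $\sup_nL_n<\infty$. All images lie in $\overline B_D(x,\sup_nL_n)$, which is compact by properness, so Arzel\`a--Ascoli gives relative compactness in $C([0,1],M)$.

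Let $P_{n_j}\to\gamma$ uniformly. Passing to the limit in the Lipschitz bound gives $D(\gamma(s),\gamma(u))\le L|s-u|$, and clearly $\gamma(0)=x$, $\gamma(1)=y$. For the reverse inequality with $s<u$, use the triangle inequality:
\[
 L=D(x,y)\le D(x,\gamma(s))+D(\gamma(s),\gamma(u))+D(\gamma(u),y)\le Ls+D(\gamma(s),\gamma(u))+L(1-u).
\]
Hence $D(\gamma(s),\gamma(u))\ge L(u-s)$, so equality holds and $\gamma\in\Geo_D(x,y)$.

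For (iii), the remark after \cref{lem:weighted-length-identity} gives $\pi_{P_n}=(P_n)_\#\mathcal L^1|_{[0,1]}$ for the constant-speed parametrization. For any $g\in C_b(M)$,
\[
 \int_M g\,\dd\pi_{P_{n_j}}=\int_0^1 g(P_{n_j}(u))\,\dd u\longrightarrow\int_0^1 g(\gamma(u))\,\dd u .
\]
The convergence holds by bounded convergence, since $g(P_{n_j}(u))\to g(\gamma(u))$ pointwise by continuity of $g$. The limit equals $\int_M g\,\dd\pi_\gamma$ because $\gamma$ is a $D$-geodesic and hence has constant speed $L$, so its normalized arclength measure is also $\gamma_\#\mathcal L^1$. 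This is exactly \eqref{eq:occupation-convergence}.

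The main obstacle is (i): the lower bound $L_n\ge L$ and the upper bound through the weighted length must both be used. Without the squeeze, uniform limits would only be shown to be $L$-Lipschitz paths rather than geodesics. Everything else is standard compactness.
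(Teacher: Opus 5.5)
Your proposal is correct and follows the paper's proof essentially step for step. It uses the squeeze $L\le L_n\le e^{2|t_n|\|f\|_\infty}L$, then Arzel\`a--Ascoli on a compact $D$-ball with the triangle-inequality upgrade from $L$-Lipschitz to geodesic, and finally the limit of $\int_0^1 g(P_{n_j}(u))\,\dd u$. The only difference is cosmetic: in (iii) you justify that limit by bounded pointwise convergence, while the paper appeals to all paths lying in a common compact set, and either argument suffices.
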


\begin{proof}
Put $M_f:=\|f\|_\infty$.  By
\cref{lem:weighted-length-identity}, testing the upper bound with any
$D$-geodesic from $x$ to $y$ gives
\[
 e^{-|t_n|M_f}L_n
 \le \ell_{t_n}^f(P_n)
 =D_{t_n}(x,y)
 \le e^{|t_n|M_f}L,
\]
whereas $L_n\ge L$.  Hence
\[
 L\le L_n\le e^{2|t_n|M_f}L,
\]
so $L_n\to L$.

For $s,t\in[0,1]$,
\[
 D(P_n(s),P_n(t))\le L_n|s-t|,
 \qquad
 P_n([0,1])\subset
 \overline B_D\!\left(x,\sup_nL_n\right).
\]
The ball is compact, so Arzel\`a--Ascoli gives relative compactness.  If
$P_n\to\gamma$ uniformly along a subsequence, then
$\gamma(0)=x$, $\gamma(1)=y$, and
\[
 D(\gamma(s),\gamma(t))\le L|s-t|.
\]
For $0\le s<t\le1$,
\[
\begin{aligned}
 L=D(\gamma(0),\gamma(1))
 &\le D(\gamma(0),\gamma(s))
     +D(\gamma(s),\gamma(t))
     +D(\gamma(t),\gamma(1))\\
 &\le Ls+D(\gamma(s),\gamma(t))+L(1-t),
\end{aligned}
\]
which forces $D(\gamma(s),\gamma(t))=L(t-s)$.  Thus
$\gamma\in\Geo_D(x,y)$.

Finally, all paths take values in one compact set.  Hence, for every
$g\in C_b(M)$,
\[
 \int_M g\,\dd\pi_{P_n}
 =\int_0^1g(P_n(u))\,\dd u
 \longrightarrow
 \int_0^1g(\gamma(u))\,\dd u
 =\int_M g\,\dd\pi_\gamma,
\]
which proves \eqref{eq:occupation-convergence}.
\end{proof}

The compactness of $\Geo_D(x,y)$ and the continuity above allow us to define
\begin{align}
 \mathcal R_f^+(x,y)
 &:=L\min_{\gamma\in\Geo_D(x,y)}\int_M f\,\dd\pi_\gamma,
 \label{eq:weyl-right-response}\\
 \mathcal R_f^-(x,y)
 &:=L\max_{\gamma\in\Geo_D(x,y)}\int_M f\,\dd\pi_\gamma.
 \label{eq:weyl-left-response}
\end{align}
The minimum in the right response reflects that a positive perturbation favors
geodesics with smaller $f$-average.  For a negative perturbation, the favored
geodesics have larger $f$-average, which explains the maximum in the left
response.
Set $m(t):=D_t(x,y)$.  For a real-valued function $a$ defined near $0$, set
\[
 a'_+(0):=\lim_{t\downarrow0}\frac{a(t)-a(0)}{t},
 \qquad
 a'_-(0):=\lim_{t\uparrow0}\frac{a(t)-a(0)}{t},
\]
whenever the limits exist.  In the unique-geodesic case, define
\[
 |\Geo_D(x,y)|=1
 \qquad\Longrightarrow\qquad
 \Geo_D(x,y)=\{\gamma_{x,y}\}.
\]

\begin{proposition}
\label{prop:weyl-response}
The one-sided derivatives of $m$ at $0$ exist and satisfy
\[
 m'_+(0)=\mathcal R_f^+(x,y),
 \qquad
 m'_-(0)=\mathcal R_f^-(x,y).
\]
Whenever $\gamma_{x,y}$ is defined, $m$ is differentiable at $0$ and
\begin{equation}
 \left.\frac{\dd}{\dd t}\right|_{t=0}\log D_t(x,y)
 =\int_M f\,\dd\pi_{\gamma_{x,y}}.
\label{eq:weyl-log}
\end{equation}
\end{proposition}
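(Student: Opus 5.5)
The proof is a standard Danskin-type envelope argument: an upper bound for the difference quotient obtained by testing with unperturbed geodesics, and a lower bound obtained by testing with the perturbed geodesics $P_n$ and applying \cref{lem:weyl-compactness}. I treat $t\downarrow0$ first; $t\uparrow0$ is the same with inequalities reversed. The two-sided statement and \eqref{eq:weyl-log} then follow because $\mathcal R_f^+=\mathcal R_f^-=L\int f\,\dd\pi_{\gamma_{x,y}}$ when the geodesic set is a singleton, together with $m(0)=L$ and the chain rule for $\log$.

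\emph{Upper bound.} Fix $\gamma\in\Geo_D(x,y)$. It has constant $D$-speed $L$ on $[0,1]$, so
\[
m(t)\le\ell_t^f(\gamma)=L\int_0^1e^{tf(\gamma(u))}\,\dd u .
\]
Since $f$ is bounded, $e^{tf}=1+tf+O(t^2)$ uniformly. Hence
\[
m(t)-L\le tL\int f\,\dd\pi_\gamma+O(t^2),
\]
with the constant in $O(t^2)$ depending only on $\|f\|_\infty$ and $L$. For $t>0$, dividing and taking $\limsup$, then minimizing over $\gamma$, gives $\limsup_{t\downarrow0}(m(t)-L)/t\le\mathcal R_f^+$. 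For $t<0$, the division flips the inequality, and maximizing over $\gamma$ gives $\liminf_{t\uparrow0}(m(t)-L)/t\ge\mathcal R_f^-$.

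\emph{Lower bound.} Take any sequence $t_n\downarrow0$ realizing the $\liminf$ of $(m(t)-L)/t$, and choose the geodesics $P_n$ as in the setup preceding \cref{lem:weyl-compactness}. Since $L_n\ge L$ and $P_n$ has constant $D$-speed $L_n$,
\[
m(t_n)=L_n\int_0^1e^{t_nf(P_n(u))}\,\dd u\ge L\int_0^1e^{t_nf(P_n(u))}\,\dd u\ge L+t_nL\int f\,\dd\pi_{P_n}-Ct_n^2 .
\]
For $t_n>0$ this gives
\[
\frac{m(t_n)-L}{t_n}\ge L\int f\,\dd\pi_{P_n}-Ct_n .
\]
By \cref{lem:weyl-compactness}(ii)--(iii), along a further subsequence $P_n\to\gamma\in\Geo_D(x,y)$ and $\int f\,\dd\pi_{P_n}\to\int f\,\dd\pi_\gamma\ge\mathcal R_f^+/L$. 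Thus the $\liminf$ is at least $\mathcal R_f^+$, and together with the upper bound the right derivative exists and equals $\mathcal R_f^+$.

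For $t_n\uparrow0$ the same chain, after dividing by the negative $t_n$, bounds $(m(t_n)-L)/t_n$ above by $L\int f\,\dd\pi_{P_n}+C|t_n|$. The limiting geodesic then satisfies $L\int f\,\dd\pi_\gamma\le\mathcal R_f^-$, which gives the matching bound on the $\limsup$.

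The main subtlety is the step that discards the factor $L_n/L\ge1$. This is legitimate only because $e^{t_nf}>0$ and we want a lower bound on $m(t_n)$, so we never need a rate for $L_n\to L$. It must be handled by keeping the inequality in the direction $m(t_n)\ge L\int_0^1e^{t_nf(P_n)}\,\dd u$ before dividing by $t_n$, whose sign determines which one-sided bound results. Everything else is the uniform expansion of $e^{tf}$ and the subsequence compactness already supplied by \cref{lem:weyl-compactness}.
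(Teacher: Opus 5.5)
Your proof is correct and is essentially the paper's argument. You obtain the upper bound by testing with unperturbed geodesics and expanding $e^{tf}$ to second order, and the lower bound from $D_{t_n}$-geodesics, $L_n\ge L$, and the subsequential compactness of \cref{lem:weyl-compactness}. There are two cosmetic differences. You treat $t\uparrow0$ directly, whereas the paper uses the identity $D_t^f=D_{-t}^{-f}$. You also drop the factor $L_n/L\ge1$ before expanding, so you never need $L_n\to L$.
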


\begin{proof}
Put $M_f:=\|f\|_\infty$.  For $|t|\le1$,
\[
 |e^{ta}-1-ta|\le C_ft^2,
 \qquad |a|\le M_f,
 \qquad C_f:=\tfrac12M_f^2e^{M_f}.
\]
Testing \eqref{eq:weyl-metric} with $\gamma\in\Geo_D(x,y)$ gives
\[
 m(t)\le
 L+tL\int_M f\,\dd\pi_\gamma+C_fLt^2,
\]
and hence
\begin{equation}
 \limsup_{t\downarrow0}\frac{m(t)-L}{t}
 \le \mathcal R_f^+(x,y).
\label{eq:weyl-upper}
\end{equation}

For the reverse inequality, choose $t_n\downarrow0$ such that
\[
 \frac{m(t_n)-L}{t_n}
 \longrightarrow
 \liminf_{t\downarrow0}\frac{m(t)-L}{t},
\]
by \cref{lem:weighted-length-identity}(iii), choose a $D_{t_n}$-geodesic
and reparameterize it at constant $D$-speed; denote the resulting path by
$P_n$.  With $L_n:=\len(P_n;D)$, invariance of metric length under
reparameterization and \cref{lem:weighted-length-identity}(ii)--(iii) give
\[
 m(t_n)
 =D_{t_n}(x,y)
 =\ell_{t_n}^f(P_n).
\]
The same Taylor estimate therefore gives
\[
 m(t_n)
 \ge L_n+t_nL_n\int_M f\,\dd\pi_{P_n}-C_ft_n^2L_n.
\]
Since $L_n\ge L$,
\[
 \frac{m(t_n)-L}{t_n}
 \ge L_n\int_M f\,\dd\pi_{P_n}-C_ft_nL_n.
\]
By \cref{lem:weyl-compactness}, after passing to a subsequence,
$P_n\to\gamma$ uniformly for some $\gamma\in\Geo_D(x,y)$,
$L_n\to L$, and $\pi_{P_n}\Rightarrow\pi_\gamma$.  Therefore
\[
 \liminf_{t\downarrow0}\frac{m(t)-L}{t}
 \ge L\int_M f\,\dd\pi_\gamma
 \ge \mathcal R_f^+(x,y).
\]
Together with \eqref{eq:weyl-upper}, this gives
\[
 m'_+(0)=\mathcal R_f^+(x,y).
\]
For the left derivative, $D_t^f=D_{-t}^{-f}$ yields
\[
 \begin{aligned}
 m'_-(0)
 &= -\left.\frac{\dd}{\dd s}\right|_{s=0+}D_s^{-f}(x,y)\\
 &= -L\min_{\gamma\in\Geo_D(x,y)}\int_M(-f)\,\dd\pi_\gamma\\
 &= \mathcal R_f^-(x,y).
 \end{aligned}
\]

Whenever $\gamma_{x,y}$ is defined, \eqref{eq:weyl-right-response}--\eqref{eq:weyl-left-response} give
\[
 m'(0)=L\int_M f\,\dd\pi_{\gamma_{x,y}}.
\]
Since $m(0)=L>0$,
\[
 \left.\frac{\dd}{\dd t}\right|_{t=0}\log D_t(x,y)
 =\frac{m'(0)}{m(0)}
 =\int_M f\,\dd\pi_{\gamma_{x,y}},
\]
which is \eqref{eq:weyl-log}.
\end{proof}

\subsection{Measurable reconstruction of the response}

Fix \(s>1\) and set \(H_s:=H^s(\C)\).  Equip \(C(\C^2)\) with the
topology of local uniform convergence; with this topology it is Polish.  The
purpose of this subsection is to reconstruct the pathwise response measurably
from the projective distance ratios.  This will supply the response sections
needed for the image-law calculus.  Choose
\(\rho\in C_c^\infty(\C)\) with \(\int_\C\rho\,\dd z=1\), and for
\([u]\in\mathcal D'(\C)/\R\) write
\[
 u^\rho:=u-\langle u,\rho\rangle\mathbf 1,
\]
the representative satisfying \(\langle u^\rho,\rho\rangle=0\).  Define
\[
 \widehat D_h:=\frac{D_{h^\rho}}{D_{h^\rho}(e_0)}.
\]

When the underlying metric needs to be specified, write
\[
 \pi_\gamma^G:=\gamma_\#\bigl(\mathcal L^1|_{[0,1]}\bigr),
 \qquad \gamma\in\Geo_G(x,y),
\]
for the normalized occupation measure.

\begin{lemma}
\label{lem:normalized-metric-measurable}
The normalized metric \(\widehat D_h\) has a measurable
\(C(\C^2)\)-valued version.  For every representative
\(\widetilde h=h^\rho+c\), almost surely,
\[
 \widehat D_h
 =\frac{D_{\widetilde h}}{D_{\widetilde h}(e_0)}.
\]
Moreover, for every \(x\ne y\),
\[
 \Geo_{\widehat D_h}(x,y)=\Geo_{D_{\widetilde h}}(x,y),
 \qquad
 \pi_\gamma^{\widehat D_h}=\pi_\gamma^{D_{\widetilde h}}
 \quad
 \bigl(\gamma\in\Geo_{D_{\widetilde h}}(x,y)\bigr).
\]
\end{lemma}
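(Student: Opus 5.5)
The plan is to build $\widehat D_h$ from measurable ingredients already available for the LQG metric, and then read off the geodesic and occupation statements from invariance under scaling by a positive constant.

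\emph{Measurability.} By \cite{GwynneMiller2021}, the LQG metric is a measurable function of the field: there is a Borel map $\Phi$ from distributions to $C(\C^2)$ with $D_{\widetilde h}=\Phi(\widetilde h)$ almost surely for each whole-plane representative. The map $[u]\mapsto u^\rho$ is measurable from $\mathcal D'(\C)/\R$ to $\mathcal D'(\C)$, because $u\mapsto\langle u,\rho\rangle$ is continuous. Hence $D_{h^\rho}=\Phi(h^\rho)$ is a measurable $C(\C^2)$-valued random variable. Evaluation at $e_0=\{z_0,w_0\}$ is continuous on $C(\C^2)$, and $D_{h^\rho}(z_0,w_0)>0$ almost surely since $D_{h^\rho}$ is a metric. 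Division of a continuous function by a positive scalar is continuous. Setting $\widehat D_h$ equal to the quotient on this event, and to a fixed metric (say the Euclidean one) off it, gives a measurable version.

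\emph{Independence of the representative.} Let $\widetilde h=h^\rho+c$, where $c$ may be random, for instance $c=\langle h,\rho'\rangle$ for another test function. Weyl scaling gives $D_{h^\rho+c}=e^{\xi c}D_{h^\rho}$ almost surely, simultaneously for all points. I would state this carefully for random $c$, using the version of Weyl scaling in \cite{GwynneMiller2021} that holds a.s.\ simultaneously for all constants. The factor $e^{\xi c}$ then cancels in the ratio, giving $\widehat D_h=D_{\widetilde h}/D_{\widetilde h}(e_0)$ almost surely.

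\emph{Geodesics and occupations.} On the almost-sure event above we have $\widehat D_h=\lambda D_{\widetilde h}$ with $\lambda=1/D_{\widetilde h}(e_0)\in(0,\infty)$. By the paper's definition, $\gamma$ is a $G$-geodesic if and only if $G(\gamma(s),\gamma(t))=G(x,y)|s-t|$. Multiplying $G$ by $\lambda$ scales both sides of this identity equally, so
\[
 \Geo_{\widehat D_h}(x,y)=\Geo_{D_{\widetilde h}}(x,y)
\]
for every $x\ne y$ simultaneously. The normalized occupation measure $\pi^G_\gamma=\gamma_\#(\mathcal L^1|_{[0,1]})$ depends only on the parametrized path, not on $G$, so the two occupation measures coincide.

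\emph{Main obstacle.} The only delicate point is making the Weyl-scaling identity hold for all points and for random constants on one almost-sure event. I would cite the simultaneous-in-$c$ form of the Weyl scaling axiom and note that $D_{h^\rho}(e_0)$ is positive and finite almost surely.
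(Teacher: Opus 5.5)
Your proposal is correct and follows essentially the same route as the paper: measurability via the composition $[h]\mapsto h^\rho\mapsto D_{h^\rho}\mapsto D_{h^\rho}/D_{h^\rho}(e_0)$, cancellation of $e^{\xi c}$ in the ratio by Weyl scaling, and the observation that multiplying a metric by a positive constant preserves the geodesic identity, while $\gamma_\#(\mathcal L^1|_{[0,1]})$ does not depend on the metric at all. Your additional care about the null event where the denominator could vanish and about random constants $c$ (handled by the simultaneous-in-$f$ form of Weyl scaling) makes explicit what the paper's proof leaves implicit.
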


\begin{proof}
Measurability follows from the composition
\[
 [h]\longmapsto h^\rho
 \longmapsto D_{h^\rho}
 \longmapsto \frac{D_{h^\rho}}{D_{h^\rho}(e_0)},
\]
where the first map is continuous and the second is measurable
\cite{GwynneMiller2021}.  For \(\widetilde h=h^\rho+c\), Weyl scaling gives
\[
 \frac{D_{\widetilde h}}{D_{\widetilde h}(e_0)}
 =\frac{e^{\xi c}D_{h^\rho}}
        {e^{\xi c}D_{h^\rho}(e_0)}
 =\widehat D_h.
\]
Thus \(\widehat D_h=aD_{\widetilde h}\) with
\(a=D_{\widetilde h}(e_0)^{-1}>0\).  Hence
\[
 \widehat D_h(\gamma(s),\gamma(t))
 =aD_{\widetilde h}(\gamma(s),\gamma(t)),
 \qquad
 \widehat D_h(x,y)=aD_{\widetilde h}(x,y),
\]
which gives
\[
 \Geo_{\widehat D_h}(x,y)=\Geo_{D_{\widetilde h}}(x,y).
\]
For every common geodesic \(\gamma\),
\[
 \pi_\gamma^{\widehat D_h}
 =\gamma_\#\bigl(\mathcal L^1|_{[0,1]}\bigr)
 =\pi_\gamma^{D_{\widetilde h}},
\]
which proves the last assertion.
\end{proof}

For each $e\in I$, fix once and for all an ordering $(x_e,y_e)$ of its two
endpoints.  This choice is used only to orient parametrized geodesics and does not
affect their normalized occupation measures.

By the basic LQG metric properties
\cite{DubedatFalconetGwynnePfefferSun2020,GwynneMiller2021} and fixed-pair
geodesic uniqueness \cite[Theorem~1.2]{MillerQian2020}, countability of $I$
gives an event
$\Omega_{\mathrm{LQG}}\in\cF$ with $\Prob(\Omega_{\mathrm{LQG}})=1$ such that,
for every $h\in\Omega_{\mathrm{LQG}}$, the space $(\C,D_h)$ is proper and
geodesic, $D_h$ induces the Euclidean topology,
\[
 D_{h+f}=e^{\xi f}\cdot D_h\qquad(f\in C(\C)),
\]
and every $e\in I$ has a unique $D_h$-geodesic between its endpoints.  Let
$\gamma_e^h$ be the unique one oriented from $x_e$ to $y_e$, so that
\[
 \Geo_{D_h}(x_e,y_e)=\{\gamma_e^h\}.
\]
Define the normalized occupation measure on all of $\Omega$ by
\begin{equation}
 \pi_e(h):=
 \begin{cases}
 (\gamma_e^h)_\#\bigl(\mathcal L^1|_{[0,1]}\bigr),
   & h\in\Omega_{\mathrm{LQG}},\\
 \delta_0,&h\notin\Omega_{\mathrm{LQG}}.
 \end{cases}
\label{eq:lqg-occupation}
\end{equation}
At this stage, $\pi_e(h)$ is used only as a pathwise object.  Its measurable
realization from the projective distance-ratio data is constructed in the
proof of \cref{prop:lqg-gradient-measurable}.

\begin{proposition}
\label{prop:lqg-response}
For every $h\in\Omega_{\mathrm{LQG}}$, $e\in I$, and $f\in C_b(\C)$,
\begin{equation}
 \left.\frac{\dd}{\dd t}\right|_{t=0}
 \log D_{h+tf}(e)
 =\xi\int_\C f\,\dd\pi_e(h).
\label{eq:lqg-log-response}
\end{equation}
Consequently, the distance-ratio coordinate $R_e$ defined in the Introduction satisfies
\begin{equation}
 \left.\frac{\dd}{\dd t}\right|_{t=0}R_e(h+tf)
 =\xi\int_\C f\,\dd\bigl(\pi_e(h)-\pi_{e_0}(h)\bigr).
\label{eq:lqg-projective-response}
\end{equation}
\end{proposition}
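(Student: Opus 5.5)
Fix $h\in\Omega_{\mathrm{LQG}}$, $e\in I$ with endpoints $x=x_e$, $y=y_e$, and $f\in C_b(\C)$. The plan is to reduce \eqref{eq:lqg-log-response} to \cref{prop:weyl-response} applied on $M=\C$ with the metric $D:=D_h$, which by the choice of $\Omega_{\mathrm{LQG}}$ is proper and geodesic, and with the bounded continuous weight $\xi f$. The key identification is
\[
 D_{h+tf}=D_t^{\xi f}\qquad(t\in\R),
\]
where $D_t^{\xi f}$ is the weighted path metric \eqref{eq:weyl-metric} built from $D_h$. The right-hand side is defined as an infimum of $\int_0^{L_P}e^{t\xi f(\widehat P(s))}\,\dd s$ over $D_h$-rectifiable paths. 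The left-hand side is the Weyl-scaled metric $e^{\xi tf}\cdot D_h$ furnished on $\Omega_{\mathrm{LQG}}$, since $tf\in C(\C)$. In the LQG literature \cite{DubedatFalconetGwynnePfefferSun2020,GwynneMiller2021}, $e^{g}\cdot D$ for continuous $g$ is defined exactly as the infimum over $D$-continuous paths of $\int_0^{\len(P;D)}e^{g(P(s))}\,\dd s$ with $P$ in $D$-arclength parametrization. Paths of infinite $D$-length contribute $+\infty$ because $e^{\xi tf}$ is bounded below. So the two infima coincide.

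With this identification, \cref{prop:weyl-response} applies with $D=D_h$ and weight $\xi f$. Since $\Geo_{D_h}(x,y)=\{\gamma_e^h\}$ by the definition of $\Omega_{\mathrm{LQG}}$, we are in the unique-geodesic case. Then \eqref{eq:weyl-log} gives
\[
 \left.\frac{\dd}{\dd t}\right|_{t=0}\log D_{h+tf}(x,y)=\int_\C \xi f\,\dd\pi_{\gamma_e^h}.
\]
Here $\pi_{\gamma_e^h}$ is the normalized occupation measure. For a geodesic parametrized on $[0,1]$ at constant speed this is $(\gamma_e^h)_\#(\mathcal L^1|_{[0,1]})=\pi_e(h)$, by \eqref{eq:lqg-occupation} and the remark after \eqref{eq:weyl-metric}. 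Pulling out the constant $\xi$ yields \eqref{eq:lqg-log-response}. The orientation of $e$ is irrelevant, since $D_h$ is symmetric and reversal preserves occupation measures.

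For \eqref{eq:lqg-projective-response}, write
\[
 R_e(h+tf)=\log D_{h+tf}(e)-\log D_{h+tf}(e_0).
\]
Apply \eqref{eq:lqg-log-response} to $e$ and to $e_0$, both of which lie in $I$, and subtract. Both derivatives exist, so the difference is differentiable at $t=0$ with the stated value. One should also note that $R_e(h+tf)$ does not depend on the additive-constant representative, because constants cancel in the ratio. This makes the left side meaningful for $h$ modulo constants.

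The main obstacle is the identification $D_{h+tf}=D_t^{\xi f}$: matching the paper's weighted-length definition with the literature's definition of Weyl scaling for continuous functions. The two points to check are the treatment of non-rectifiable paths and the fact that the unbounded-domain setting causes no trouble, since $f$ is bounded. The remaining steps are direct citations of \cref{prop:weyl-response}.
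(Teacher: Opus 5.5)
Your proposal is correct and follows the paper's proof: Weyl scaling on $\Omega_{\mathrm{LQG}}$, then \cref{prop:weyl-response} applied to $D_h$ with weight $\xi f$ in the unique-geodesic case, then subtracting the formula for $e_0$. The only difference is that you make explicit the identification $e^{\xi tf}\cdot D_h=D_t^{\xi f}$, including why non-rectifiable paths contribute $+\infty$ when $f$ is bounded, which the paper uses without comment.
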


\begin{proof}
Fix $h\in\Omega_{\mathrm{LQG}}$, $e\in I$, and $f\in C_b(\C)$.  By Weyl scaling,
\[
 D_{h+tf}=e^{\xi tf}\cdot D_h.
\]
Applying \cref{prop:weyl-response} to $D_h$ with weight $\xi f$ gives
\[
 \left.\frac{\dd}{\dd t}\right|_{t=0}\log D_{h+tf}(e)
 =\xi\int_\C f\,\dd\pi_e(h),
\]
which proves \eqref{eq:lqg-log-response}.  Since
\[
 R_e(h+tf)=\log D_{h+tf}(e)-\log D_{h+tf}(e_0),
\]
subtracting the formula for $e_0$ gives \eqref{eq:lqg-projective-response}.
\end{proof}

We next realize the signed occupation measures as vectors in $H_s$.
Identify \(\C\) with \(\R^2\) and, for Schwartz functions, use
\[
 \widehat f(\zeta):=\int_{\R^2}e^{-iz\cdot\zeta}f(z)\,\dd z,
 \qquad
 f(z)=\int_{\R^2}e^{iz\cdot\zeta}\widehat f(\zeta)
 \,\frac{\dd\zeta}{(2\pi)^2},
\]
with
\[
 \|f\|_{H^s}^2
 :=\int_{\R^2}(1+|\zeta|^2)^s|\widehat f(\zeta)|^2
 \,\frac{\dd\zeta}{(2\pi)^2}.
\]
Let \(H^{-s}(\C)=H_s^*\) and let
\(\mathcal R_s:H^{-s}(\C)\to H_s\) be the Riesz isomorphism; in Fourier
variables it is multiplication by \((1+|\zeta|^2)^{-s}\).  Define
\[
 k_s(z,w):=\int_{\R^2}e^{i(z-w)\cdot\zeta}
 (1+|\zeta|^2)^{-s}\,\frac{\dd\zeta}{(2\pi)^2}.
\]
Since $s>1$, the kernel $k_s$ is bounded and continuous.

\begin{proposition}
\label{prop:measure-hilbertization}
\begin{enumerate}[label=\textnormal{(\roman*)}]
\item There is \(C_{2,s}<\infty\) such that every finite signed Radon measure
\(\tau\) on \(\C\) belongs to \(H^{-s}(\C)\), and
\begin{equation}
 \|\tau\|_{H^{-s}}
 \le C_{2,s}\|\tau\|_{\mathrm{TV}}.
\label{eq:sobolev-measure}
\end{equation}
\item For every such \(\tau\), \(\mathcal R_s\tau\) has the bounded
continuous representative
\begin{equation}
 (\mathcal R_s\tau)(z)=\int_{\C}k_s(z,w)\,\tau(\dd w).
\label{eq:bessel-potential}
\end{equation}
\item For finite signed Radon measures \(\nu,\eta\) on \(\C\),
\begin{equation}
 \langle \mathcal R_s\nu,\mathcal R_s\eta\rangle_{H_s}
 =\iint_{\C\times\C}k_s(z,w)\,\nu(\dd z)\eta(\dd w).
\label{eq:kernel-energy}
\end{equation}
\end{enumerate}
\end{proposition}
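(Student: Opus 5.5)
The plan is to work entirely on the Fourier side.

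For part (i), I would first use that a finite signed Radon measure \(\tau\) has a bounded continuous Fourier transform
\[
 \widehat\tau(\zeta)=\int_\C e^{-iw\cdot\zeta}\,\tau(\dd w),
 \qquad
 |\widehat\tau(\zeta)|\le\|\tau\|_{\mathrm{TV}}.
\]
Next, for a Schwartz function \(\varphi\), Fubini and Fourier inversion give
\[
 \langle\tau,\varphi\rangle
 =\int_{\R^2}\overline{\widehat\tau(\zeta)}\,\widehat\varphi(\zeta)\,\frac{\dd\zeta}{(2\pi)^2}.
\]
Applying Cauchy--Schwarz with weights \((1+|\zeta|^2)^{\pm s/2}\) yields
\[
 |\langle\tau,\varphi\rangle|
 \le\Bigl(\int(1+|\zeta|^2)^{-s}|\widehat\tau|^2\frac{\dd\zeta}{(2\pi)^2}\Bigr)^{1/2}\|\varphi\|_{H^s}
 \le C_{2,s}\|\tau\|_{\mathrm{TV}}\|\varphi\|_{H^s}.
\]
Here
\[
 C_{2,s}^2=\int(1+|\zeta|^2)^{-s}\frac{\dd\zeta}{(2\pi)^2}=k_s(0,0),
\]
which is finite precisely because \(s>1\) in dimension two. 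Density of Schwartz functions in \(H^s\) then extends \(\tau\) to a bounded functional on \(H_s\). Hence \(\tau\in H^{-s}\), with the bound \eqref{eq:sobolev-measure}.

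For part (ii), \(\mathcal R_s\tau\) is the element of \(H_s\) with Fourier transform \((1+|\zeta|^2)^{-s}\widehat\tau\). This function lies in \(L^1(\R^2)\), since \(|\widehat\tau|\) is bounded and \(s>1\). So its inverse Fourier transform is a bounded continuous function, and it agrees a.e. with the \(L^2\) element. Writing out that inverse transform and exchanging the \(\zeta\)- and \(w\)-integrals by Fubini gives \eqref{eq:bessel-potential}. Fubini applies because the integrand is dominated by \((1+|\zeta|^2)^{-s}\) times \(|\tau|\), which is integrable on \(\R^2\times\C\). Boundedness and continuity of \(k_s\) (dominated convergence) give the same properties for the representative directly, with sup bound \(k_s(0,0)\|\tau\|_{\mathrm{TV}}\).

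For part (iii), Plancherel in \(H_s\) gives
\[
 \langle\mathcal R_s\nu,\mathcal R_s\eta\rangle_{H_s}
 =\int(1+|\zeta|^2)^{-s}\,\widehat\nu(\zeta)\,\overline{\widehat\eta(\zeta)}\,\frac{\dd\zeta}{(2\pi)^2}.
\]
Expanding both transforms as integrals and applying Fubini, with the same integrable domination as in (ii), turns this into \eqref{eq:kernel-energy}. The ordering of \(z,w\) in \(k_s\) does not matter: \(k_s\) is real and symmetric, because the Fourier multiplier is even.

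The only delicate point is (i)–(ii): a measure is not a tempered \(L^2\) object, so one must check that the Riesz map acts as the stated multiplier on it. This follows once \(\tau\) is identified as a functional on \(H_s\) through the pairing formula above. Everything else is Fubini with the integrable weight \((1+|\zeta|^2)^{-s}\).
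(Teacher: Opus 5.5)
Your proposal is correct and follows essentially the same Fourier-side route as the paper: the bound $|\widehat\tau|\le\|\tau\|_{\mathrm{TV}}$, the integrability of $(1+|\zeta|^2)^{-s}$ for $s>1$, Fubini, and the multiplier characterization of $\mathcal R_s$. The differences are minor. You prove (i) by Cauchy--Schwarz in Fourier variables, which gives the explicit constant $C_{2,s}=k_s(0,0)^{1/2}$, where the paper cites the Sobolev embedding $H^s\hookrightarrow C_b$. You obtain (iii) directly from Plancherel, where the paper combines the Riesz identity with (ii).
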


\begin{proof}
For \(f\in H_s\), Sobolev embedding gives
\[
 \left|\int_{\C}f\,\dd\tau\right|
 \le \|\tau\|_{\mathrm{TV}}\|f\|_\infty
 \le C_{2,s}\|\tau\|_{\mathrm{TV}}\|f\|_{H_s},
\]
which proves (i).  For (ii), put
\[
 u_\tau(z):=\int_{\C}k_s(z,w)\,\tau(\dd w),
 \qquad
 \widehat\tau(\zeta):=\int_{\C}e^{-iw\cdot\zeta}\,\tau(\dd w).
\]
Since \(|\widehat\tau|\le\|\tau\|_{\mathrm{TV}}\) and
\((1+|\zeta|^2)^{-s}\in L^1(\R^2)\), Fubini gives
\[
 u_\tau(z)=\int_{\R^2}e^{iz\cdot\zeta}(1+|\zeta|^2)^{-s}
 \widehat\tau(\zeta)\,\frac{\dd\zeta}{(2\pi)^2}.
\]
Hence
\[
 \|u_\tau\|_{H_s}^2
 =\int_{\R^2}(1+|\zeta|^2)^{-s}|\widehat\tau(\zeta)|^2
 \,\frac{\dd\zeta}{(2\pi)^2}<\infty,
\]
and the multiplier characterization gives \(u_\tau=\mathcal R_s\tau\).
Finally,
\[
 \begin{aligned}
 \langle\mathcal R_s\nu,\mathcal R_s\eta\rangle_{H_s}
 &=\langle\nu,\mathcal R_s\eta\rangle_{H^{-s},H_s}\\
 &=\int_{\C}\!\int_{\C}k_s(z,w)\,\eta(\dd w)\nu(\dd z),
 \end{aligned}
\]
and the double integral is absolutely convergent because \(k_s\) is bounded
and the measures are finite.  This proves (iii).
\end{proof}

Set
\begin{equation}
 \nu_e(h):=\pi_e(h)-\pi_{e_0}(h),
 \qquad
 \widetilde g_e(h):=\xi \mathcal R_s\nu_e(h).
\label{eq:lqg-gradient}
\end{equation}
By \eqref{eq:sobolev-measure},
\begin{equation}
 \|\widetilde g_e(h)\|_{H_s}\le2\xi C_{2,s}.
\label{eq:lqg-gradient-bound}
\end{equation}
Also set
\[
 J(h):=(R_e(h))_{e\in I}\in\R^I,
 \qquad
 \mu_L:=J_\#\Prob.
\]
Thus \(\widetilde g_e\) is the pathwise response, while the next proposition
realizes it as a measurable function of the image variable \(J\).

\begin{proposition}
\label{prop:lqg-gradient-measurable}
For every \(e\in I\), there is a measurable map
\[
 g_e:\R^I\longrightarrow H_s
\]
such that
\[
 g_e(J(h))=\widetilde g_e(h)\quad\Prob\text{-a.s.},
 \qquad
 \|g_e(x)\|_{H_s}\le2\xi C_{2,s}
 \quad\mu_L\text{-a.e. }x.
\]
In particular, \(g_e\in L^2(\mu_L;H_s)\).
\end{proposition}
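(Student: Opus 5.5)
The plan is to show that $\widetilde g_e$ is a measurable function of $\widehat D_h$ alone, and then that $\widehat D_h$ is a measurable function of $J(h)$. The bound and the $L^2$ statement are then inherited from \eqref{eq:lqg-gradient-bound}, since $\mu_L$ is a probability measure.

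\emph{Step 1: reconstruct the metric from $J$.} On $\Omega_{\mathrm{LQG}}$ we have
\[
 \widehat D_h(z,w)=\exp(R_{\{z,w\}}(h)),\qquad z\ne w\in Q,
\]
and $\widehat D_h(z,z)=0$. The metric $\widehat D_h$ is continuous and $Q^2$ is dense in $\C^2$, so $\widehat D_h$ is the unique continuous extension of this data.

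To make this a Borel map, define $\Phi:\R^I\to C(\C^2)$ as follows. Let $A\subset\R^I$ be the set of $x$ for which $(z,w)\mapsto e^{x_{\{z,w\}}}$ on $Q^2$ (set to $0$ on the diagonal) is uniformly continuous on bounded sets. This is a countable intersection of countable unions of conditions on countably many coordinates, hence Borel. On $A$, let $\Phi(x)$ be the continuous extension; off $A$, let $\Phi(x)=0$. Each evaluation $\Phi(x)(z,w)$ is a pointwise limit of coordinate functions, hence Borel. Evaluations at points generate the Borel $\sigma$-field of the Polish space $C(\C^2)$, so $\Phi$ is measurable. By construction $\Phi(J(h))=\widehat D_h$ on $\Omega_{\mathrm{LQG}}$.

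\emph{Step 2: reconstruct the occupation measure from the metric.} By \cref{lem:normalized-metric-measurable}, for $h\in\Omega_{\mathrm{LQG}}$ the set $\Geo_{\widehat D_h}(x_e,y_e)$ is the singleton $\{\gamma_e^h\}$, and $\pi_e(h)=\pi^{\widehat D_h}_{\gamma_e^h}$. This reduces the problem to a deterministic measurable selection.

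Let $\mathcal G\subset C(\C^2)$ be the Borel set of metrics $G$ inducing the Euclidean topology, proper and geodesic, with a unique geodesic between $x_e$ and $y_e$. On $\mathcal G$, define $\Psi(G)$ to be the occupation measure of that unique geodesic; off $\mathcal G$, set $\Psi(G)=\delta_0$.

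The measurability argument goes as follows. The relation
\[
 K:=\{(G,\gamma):\gamma\in\Geo_G(x_e,y_e)\}\subset C(\C^2)\times C([0,1],\C)
\]
is closed, because the geodesic identity is preserved under joint local uniform limits. Over $\mathcal G$, each section of $K$ is a singleton. A Borel set with singleton sections has Borel graph, so the selection $G\mapsto\gamma_G$ is Borel; this is Lusin–Souslin, or simply the fact that a map between Polish spaces with Borel graph is Borel.

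The map $\gamma\mapsto\gamma_\#\mathcal L^1|_{[0,1]}$ is continuous into finite measures with the weak topology. Finally, $\tau\mapsto\xi\mathcal R_s\tau$ is measurable into $H_s$. To see this, note that $H_s$ is separable, so by Pettis it suffices to check measurability of $\langle\mathcal R_s\tau,\phi\rangle_{H_s}=\int\phi\,\dd\tau$ for $\phi$ in a dense set of Schwartz functions. These pairings are continuous in $\tau$ for the weak topology.

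\emph{Step 3: assemble.} Set
\[
 g_e:=\xi\mathcal R_s\bigl(\Psi_e\circ\Phi-\Psi_{e_0}\circ\Phi\bigr).
\]
This is Borel, and $g_e(J(h))=\widetilde g_e(h)$ for $h\in\Omega_{\mathrm{LQG}}$, that is, $\Prob$-a.s. The norm bound holds for all $x$: $\Psi$ always outputs a difference of two probability measures, so \eqref{eq:sobolev-measure} bounds it by $2\xi C_{2,s}$. In particular $g_e\in L^2(\mu_L;H_s)$.

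\emph{Main obstacle.} The delicate point is the Borel measurability of $\mathcal G$ and of the selection in Step 2. If checking properness and the geodesic property measurably proves awkward, I would avoid it: replace $\mathcal G$ by the simpler Borel set of $G$ whose $K$-section is a singleton. This set contains $\widehat D_h$ almost surely, and it is Borel because it is the complement of the projection of a closed set of pairs $(\gamma,\gamma')$ with $d(\gamma,\gamma')\ge 1/n$, taken over $n$. These projections are $\sigma$-compact once we restrict to equi-Lipschitz paths inside a bounded region, and we can do that by intersecting with countably many bounds on $G(x_e,y_e)$. If needed, I would fall back on universal measurability, which still yields a Borel version after modification on a $\mu_L$-null set.
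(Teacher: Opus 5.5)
Your skeleton is the paper's: reconstruct $\widehat D_h$ from $J$, cut out geodesics by the closed relation $\{(G,\gamma):\gamma\in\Geo_G(x_e,y_e)\}$, select a geodesic, push forward to $\pi_e$, and apply $\xi\mathcal R_s$. Step~1 is a valid and more explicit substitute for the paper's Lusin--Souslin inversion of the restriction map $\operatorname{Res}$. Your set $A$ is exactly $S^{-1}(\operatorname{Res}(C(\C^2)))$, and $\Phi$ is Borel because point evaluations generate the Borel $\sigma$-field. Your Pettis argument for $\tau\mapsto\mathcal R_s\tau$ can likewise replace the paper's kernel computation showing that $P\mapsto P_\#(\mathcal L^1|_{[0,1]})$ is continuous into $H^{-s}(\C)$.

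The weak point is the main line of Step~2: you assert without proof that $\mathcal G$ is Borel, and the repair you sketch does not work as written.
\begin{itemize}
\item ``At least two geodesics'' is a projection of a closed set, so a priori it is only analytic. ``Exactly one'' also requires the analytic condition ``at least one''; the complement you describe gives only ``at most one''.
\item Your $F_\sigma$ claim needs the sections to lie in a fixed compact subset of $C([0,1],\C)$, which is not $\sigma$-compact. A bound on $G(x_e,y_e)$ controls geodesics in the Lipschitz sense with respect to $G$, not the Euclidean metric, so there is no reason for them to be Euclidean equi-Lipschitz.
\end{itemize}
A genuinely Borel argument does exist, but it is a different one. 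On the Borel class of proper metrics inducing the Euclidean topology, $G_j\to G$ locally uniformly forces the $G_j$-geodesics from $x_e$ to $y_e$ to stay in a fixed disk. They are also equicontinuous, because $G_j(\gamma_j(s),\gamma_j(t))\to0$ and $G$ separates points. Hence the sets ``at least one geodesic'' and ``two geodesics at uniform distance $\ge 1/n$'' are relatively closed there.

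Your fallback, on the other hand, is exactly the paper's proof. The projection $B_e$ of the closed relation is analytic, and Jankov--von Neumann gives a universally measurable selector on $B_e$. Uniqueness is used only at the random point $\widehat D_h$, to identify the selection with $\gamma_e^h$ almost surely. The composite with your Borel $\Phi$ is universally measurable, hence $\mu_L$-a.e.\ equal to a Borel map. The proposition only requires the a.s.\ identity and a $\mu_L$-a.e.\ bound, so that route completes the proof. What the Borel-graph route would buy, if finished correctly, is a Borel $g_e$ satisfying the bound at every $x$.
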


\begin{proof}
The construction has four stages.  We first reconstruct the normalized metric
from its values on the dense set \(Q\), then select a geodesic measurably, pass
from the selected path to its occupation measure, and finally apply the Riesz
map.

Define
\[
 \operatorname{Res}:C(\C^2)\longrightarrow\R^{Q^2},
 \qquad
 \operatorname{Res}(D):=(D(q,r))_{(q,r)\in Q^2}.
\]
Here \(C(\C^2)\) carries the local-uniform topology fixed above and
\(\R^{Q^2}\) the product topology.  The map \(\operatorname{Res}\) is
continuous and injective.  Hence, by the Lusin--Souslin theorem
\cite[Theorem~15.1]{Kechris1995},
\[
 A:=\operatorname{Res}(C(\C^2))\in\mathcal B(\R^{Q^2}),
 \qquad
 \operatorname{Res}^{-1}:A\longrightarrow C(\C^2)
 \text{ is measurable}.
\]
Let \(S:\R^I\to\R^{Q^2}\) be given by
\[
 S(x)_{(q,r)}:=
 \begin{cases}
 0,&q=r,\\
 e^{x_{\{q,r\}}},&q\ne r,
 \end{cases}
\]
and let \(D_*(z,w):=|z-w|\).  Define the reconstructed
\(C(\C^2)\)-valued map
\[
 D^{\mathrm{rec}}(x):=
 \begin{cases}
 \operatorname{Res}^{-1}(S(x)),&S(x)\in A,\\
 D_*,&S(x)\notin A.
 \end{cases}
\]
Then \(D^{\mathrm{rec}}:\R^I\to C(\C^2)\) is measurable and
\[
 S(J(h))=\operatorname{Res}(\widehat D_h),
 \qquad
 D^{\mathrm{rec}}(J(h))=\widehat D_h
 \quad\Prob\text{-a.s.}
\]

Fix \(e\in I\), and use the ordered endpoints \((x_e,y_e)\) fixed above.
Let
\[
 \mathsf G_e\subset C(\C^2)\times C([0,1],\C)
\]
consist of the pairs \((D,P)\) satisfying
\[
 P(0)=x_e,\qquad P(1)=y_e,\qquad
 D(P(s),P(t))=|s-t|D(x_e,y_e)
 \quad(s,t\in\mathbb Q\cap[0,1]).
\]
Since \(D\) and \(P\) are continuous,
\[
 \begin{aligned}
 &D(P(s),P(t))=|s-t|D(x_e,y_e)
 &&(s,t\in\mathbb Q\cap[0,1])\\
 &\qquad\Longleftrightarrow\qquad
 D(P(s),P(t))=|s-t|D(x_e,y_e)
 &&(s,t\in[0,1]).
 \end{aligned}
\]
Thus, when \(D\) is a metric inducing the Euclidean topology, the
corresponding fiber is precisely \(\Geo_D(x_e,y_e)\).  If \(D_n\to D\)
locally uniformly and \(P_n\to P\) uniformly, then the ranges of \(P_n\) and
\(P\) are eventually contained in a common compact subset of \(\C\), and
\[
 \sup_{s,t\in[0,1]}
 \bigl|D_n(P_n(s),P_n(t))-D(P(s),P(t))\bigr|
 \longrightarrow0.
\]
The endpoint conditions also pass to the uniform limit.  Hence
\(\mathsf G_e\) is closed.  Set
\[
 B_e:=\operatorname{proj}_1\mathsf G_e.
\]
Since \(\mathsf G_e\) is closed, \(B_e\) is analytic and therefore universally
measurable.  The Jankov--von Neumann theorem
\cite[Theorem~18.1]{Kechris1995} gives a universally measurable selector
\(P_e^0:B_e\to C([0,1],\C)\) such that
\[
 (D,P_e^0(D))\in\mathsf G_e
 \qquad(D\in B_e).
\]
Fix the deterministic path
\[
 P_e^*(u):=(1-u)x_e+uy_e,
 \qquad u\in[0,1],
\]
and define
\[
 P_e(D):=
 \begin{cases}
  P_e^0(D),&D\in B_e,\\
  P_e^*,&D\notin B_e.
 \end{cases}
\]
Then \(P_e:C(\C^2)\to C([0,1],\C)\) is universally measurable and
\[
 (D,P_e(D))\in\mathsf G_e
 \qquad(D\in B_e).
\]
Since \(\widehat D_h\) and \(D_h\) have the same geodesics and the
endpoint ordering is fixed, uniqueness gives
\[
 P_e(D^{\mathrm{rec}}(J(h)))=\gamma_e^h
 \quad\Prob\text{-a.s.}
\]

\medskip
\noindent We next pass from the selected path to its occupation measure.  The
map
\[
 C([0,1],\C)\ni P
 \longmapsto
 P_\#\bigl(\mathcal L^1|_{[0,1]}\bigr)
 \in H^{-s}(\C)
\]
is continuous.  Indeed, if \(P_n\to P\) uniformly, then
\[
\begin{aligned}
 &\left\|(P_n)_\#(\mathcal L^1|_{[0,1]})
          -P_\#(\mathcal L^1|_{[0,1]})\right\|_{H^{-s}}^2\\
 &=\int_0^1\!\int_0^1
 \bigl[k_s(P_n(u),P_n(v))-k_s(P_n(u),P(v))\\
 &\hspace{36mm}-k_s(P(u),P_n(v))+k_s(P(u),P(v))\bigr]
 \,\dd u\,\dd v
 \longrightarrow0
\end{aligned}
\]
by bounded continuity of \(k_s\).  Consequently,
\[
 x\longmapsto
 (P_e(D^{\mathrm{rec}}(x)))_\#
 \bigl(\mathcal L^1|_{[0,1]}\bigr)
\]
is universally measurable.  Hence it is measurable with respect to the
\(\mu_L\)-completion of \(\mathcal B(\R^I)\).  Since \(\R^I\) is standard
Borel and \(H^{-s}(\C)\) is a separable Hilbert space, it agrees
\(\mu_L\)-almost everywhere with a Borel map
\[
 p_e:\R^I\longrightarrow H^{-s}(\C).
\]
Thus
\[
 p_e(x)
 =(P_e(D^{\mathrm{rec}}(x)))_\#
   \bigl(\mathcal L^1|_{[0,1]}\bigr)
 \quad\mu_L\text{-a.e. }x.
\]
Since \(\mu_L=J_\#\Prob\), the reconstruction identity yields
\[
 p_e(J(h))=\pi_e(h)
 \quad\Prob\text{-a.s.}
\]

\medskip
\noindent Finally, define
\[
 g_e(x):=\xi\mathcal R_s\bigl(p_e(x)-p_{e_0}(x)\bigr).
\]
Then
\[
 g_e(J(h))
 =\xi\mathcal R_s\bigl(\pi_e(h)-\pi_{e_0}(h)\bigr)
 =\widetilde g_e(h)
 \quad\Prob\text{-a.s.}
\]
and \eqref{eq:lqg-gradient-bound}, together with \(\mu_L=J_\#\Prob\), gives
\[
 \mu_L\!\left(\|g_e\|_{H_s}>2\xi C_{2,s}\right)
 =\Prob\!\left(\|g_e(J(h))\|_{H_s}>2\xi C_{2,s}\right)=0.
\]
\end{proof}

\subsection{Closed calculus on the projective metric law}

We now place the measurable response sections in the abstract framework of
\cref{sec:intro}.  The only remaining inputs are the Cameron--Martin
integration-by-parts identity and the two density conditions in
\cref{ass:intro-response}.

Let \(K=\dot H^1(\C)/\R\) be the Cameron--Martin space
\cite{Sheffield2007},
\[
 \langle f,g\rangle_K
 =\frac1{2\pi}\int_\C\nabla f\cdot\nabla g\,\dd z,
\]
so that \(H_s\hookrightarrow K\) continuously.  Let
\(W:K\to L^2(\Prob)\) be the Gaussian logarithmic derivative normalized by
\[
 \Ex[\Phi(h+tf)]
 =\Ex\!\left[\Phi(h)
   \exp\!\left\{tW(f)-\frac12t^2\|f\|_K^2\right\}\right],
 \qquad f\in K,
\]
for bounded measurable \(\Phi\).  Choose an orthonormal basis
\((f_n)_{n\ge1}\subset C_c^\infty(\C)\) of \(H_s\), and let \(V_0\) be its
rational linear span.

For \(x=(x_e)_{e\in I}\in\R^I\), write \(R_e(x):=x_e\).  We apply the
abstract calculus of \cref{sec:intro} on \((\R^I,\mu_L)\) with the constant
Hilbert space \(H_s\), taking \(r_e=R_e\) and the response sections \(g_e\)
from \cref{prop:lqg-gradient-measurable}.  Let \(\Pres_L,\Cyl_L\) and
\(F_P,Y_P,G_P\) denote the corresponding presentation, cylinder, pullback,
and response objects defined in \cref{sec:intro}.

\begin{proposition}
\label{prop:lqg-closed}
With the preceding definitions, we have:
\begin{enumerate}[label=\textnormal{(\roman*)}]
\item The assignment \(F_P\mapsto G_P\) descends to a densely defined closable
operator
\[
 D_{L,0}:\Cyl_L\subset L^2(\mu_L)\longrightarrow L^2(\mu_L;H_s).
\]
\item With \(D_L:=\overline{D_{L,0}}\),
\[
 \cE_L(F,G)
 :=\frac12\int\langle D_LF,D_LG\rangle_{H_s}\,\dd\mu_L,
 \qquad F,G\in\Dom(D_L),
\]
is a densely defined closed symmetric Markov form on \(L^2(\mu_L)\).
\end{enumerate}
\end{proposition}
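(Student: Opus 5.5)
The plan is to verify \cref{ass:intro-response} for the data $(\R^I,\mu_L)$, $H_x\equiv H_s$, $r_e=R_e$, the sections $g_e$ of \cref{prop:lqg-gradient-measurable}, and $V_0$ the rational span of $(f_n)$, with $v_x\equiv v$ constant. Once this is done, \cref{thm:intro-image-closure} gives (i) and the closed Markov form $\int\langle D_LF,D_LG\rangle\,\dd\mu_L$. Multiplying by the constant $\tfrac12$ preserves closedness, density, symmetry and the Markov property, which gives (ii).

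Item (i) of the assumption is immediate. Each $g_e$ is bounded by $2\xi C_{2,s}$, and each $v\in V_0$ is a constant vector. Hence $\partial_vF_P=\sum_i\partial_i\varphi(\cdot)\langle g_{a_i},v\rangle$ is bounded.

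For item (ii), scalar density follows from \cref{prop:projective-density}. Here $\mathcal X=\mathcal B(\R^I)$ is generated by the coordinates $R_e$, so \eqref{eq:sigma-generation} holds trivially. For vector density in $L^2(\mu_L;H_s)$, I would use that finite sums $\sum F_jv_j$ with $F_j\in L^2(\mu_L)$ and $v_j$ in the span of $(f_n)$ are dense, since $(f_n)$ is an orthonormal basis of $H_s$. Then I approximate each $F_j$ by cylinders and each real coefficient by rationals, using $\|Fv\|=\|F\|_{L^2}\|v\|_{H_s}$.

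Item (iii) is the substantive step. Take $Y_P=\varphi(R_{a_1}(h),\dots,R_{a_n}(h))$ and define $\partial_vY_P:=\frac{\dd}{\dd t}|_{t=0}Y_P(h+tv)$. By \cref{prop:lqg-response}, on $\Omega_{\mathrm{LQG}}$ this equals $\sum_i\partial_i\varphi\,\xi\int v\,\dd\nu_{a_i}(h)$; note $v\in C_c^\infty\subset C_b$. By \cref{prop:measure-hilbertization}, $\xi\int v\,\dd\nu_a=\langle\xi\mathcal R_s\nu_a,v\rangle_{H_s}=\langle\widetilde g_a(h),v\rangle$. Combined with $g_a(J)=\widetilde g_a$ a.s., this gives $\partial_vY_P=(\partial_vF_P)(J)$ a.s.

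For the ambient score I would take $\beta_v=W(v)$; it lies in $L^2$ because $v\in H_s\hookrightarrow K$. By Cameron--Martin, $\Ex[Y_P(h+tv)]=\Ex[Y_P\exp\{tW(v)-\tfrac{t^2}2\|v\|_K^2\}]$. Differentiating at $t=0$ gives $\Ex[\partial_vY_P]=\Ex[Y_PW(v)]$. On the left, I interchange by dominated convergence. The difference quotients are bounded by $\|\nabla\varphi\|_\infty$ times the Lipschitz bound $|R_e(h+tv)-R_e(h)|\le2\xi|t|\|v\|_\infty$, which follows from Weyl scaling $D_{h+tv}=e^{\xi tv}\cdot D_h$ and \cref{lem:weighted-length-identity}(i). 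On the right, differentiation under the expectation is justified because $Y_P$ is bounded and the exponential martingale has Gaussian moments. The conditional image score $b_v$ exists by Doob--Dynkin, since $\R^I$ is standard Borel.

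The main obstacle I expect is the careful justification that the pathwise derivative coincides a.s. with the Cameron--Martin derivative. This includes checking that $h+tv$ still lies in $\Omega_{\mathrm{LQG}}$-type good events for the relevant computations; this is fine because the Weyl identity there holds for all $f\in C(\C)$. It also includes checking that $R_e$ is well defined on representatives mod constants.
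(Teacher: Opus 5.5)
Your proposal is correct and follows the paper's proof essentially step by step. You verify \cref{ass:intro-response} using the uniformly bounded sections $g_e$, projective cylinder density together with tensor-product density, and the ambient score $W(v)$ obtained by differentiating the Cameron--Martin formula under the Weyl Lipschitz bound; you then apply \cref{thm:intro-image-closure} and rescale by $\tfrac12$, which is exactly the paper's argument (your explicit rational-coefficient approximation just spells out what the paper calls ``the usual density of finite tensor products'').
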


\begin{proof}
We verify \cref{ass:intro-response}.

For \cref{ass:intro-response}(i), let
\(P=(\varphi;e_1,\ldots,e_n)\in\Pres_L\).  By
\cref{prop:lqg-gradient-measurable},
\[
 \|G_P(x)\|_{H_s}
 \le 2\xi C_{2,s}\sum_{i=1}^n\|\partial_i\varphi\|_\infty
 \quad\text{for }\mu_L\text{-a.e. }x.
\]
Hence
\[
 g_e\in L^2(\mu_L;H_s),\qquad
 \|f\|_{L^2(\mu_L;H_s)}=\|f\|_{H_s}\quad(f\in V_0),
\]
and
\[
 \|\partial_fF_P\|_{L^2(\mu_L)}
 \le \|f\|_{H_s}\,\|G_P\|_{L^2(\mu_L;H_s)}<\infty.
\]

For \cref{ass:intro-response}(ii), since \(I\) is countable,
\[
 \sigma(R_e:e\in I)=\mathcal B(\R^I).
\]
Thus \cref{prop:projective-density} gives
\[
 \overline{\Cyl_L}^{\,L^2(\mu_L)}=L^2(\mu_L).
\]
Combining the scalar density just proved with the density of \(V_0\) in
\(H_s\), the usual density of finite tensor products gives
\[
 \overline{\Span\{F_Pf:P\in\Pres_L,\ f\in V_0\}}
 ^{\,L^2(\mu_L;H_s)}
 =L^2(\mu_L;H_s).
\]

For \cref{ass:intro-response}(iii), fix \(f\in V_0\) and
\(P=(\varphi;e_1,\ldots,e_n)\in\Pres_L\).  By
\eqref{eq:lqg-projective-response}, the Riesz identity, and
\cref{prop:lqg-gradient-measurable},
\[
 \begin{aligned}
 \partial_fR_e(h)
 &=\xi\int_\C f\,\dd\nu_e(h)\\
 &=\langle \widetilde g_e(h),f\rangle_{H_s}\\
 &=\langle g_e(J(h)),f\rangle_{H_s}
 \end{aligned}
 \qquad\Prob\text{-a.s.}
\]
Consequently the finite-dimensional chain rule yields
\[
 \begin{aligned}
 \partial_fY_P(h)
 &=\sum_{i=1}^n
   \partial_i\varphi(R_{e_1}(h),\ldots,R_{e_n}(h))
   \langle g_{e_i}(J(h)),f\rangle_{H_s}\\
 &=\langle G_P(J(h)),f\rangle_{H_s}
 = (\partial_fF_P)(J(h))
 \end{aligned}
 \qquad\Prob\text{-a.s.}
\]
Weyl comparison also gives
\[
 |R_e(h+tf)-R_e(h)|\le2\xi|t|\|f\|_\infty,
 \qquad e\in I,
\]
and therefore
\[
 \left|\frac{Y_P(h+tf)-Y_P(h)}{t}\right|
 \le2\xi\|f\|_\infty
       \sum_{i=1}^n\|\partial_i\varphi\|_\infty,
 \qquad t\ne0,
\]
for \(\Prob\)-almost every \(h\).  The displayed bound justifies
differentiating the left-hand expectation by dominated convergence.  On the
right, \(Y_P\) is bounded and the Gaussian logarithmic derivative has finite
exponential moments, so the Cameron--Martin formula may also be differentiated
at zero.
We obtain
\[
 \begin{aligned}
 \Ex[\partial_fY_P]
 &=\left.\frac{\dd}{\dd t}\right|_{t=0}\Ex[Y_P(h+tf)]\\
 &=\left.\frac{\dd}{\dd t}\right|_{t=0}
   \Ex\!\left[Y_P(h)e^{tW(f)-t^2\|f\|_K^2/2}\right]\\
 &=\Ex[Y_PW(f)].
 \end{aligned}
\]
Thus
\[
 \beta_f:=W(f)
\]
is an ambient score.  Since \(\R^I\) is standard Borel, choose a measurable
\(b_f:\R^I\to\R\) such that
\[
 b_f(J)=\Ex[W(f)\mid\sigma(J)]
 \quad\Prob\text{-a.s.}
\]
Then
\[
 \|b_f\|_{L^2(\mu_L)}^2
 \le \Ex[|W(f)|^2]
 =\|f\|_K^2<\infty.
\]
Hence \cref{ass:intro-response} holds.  Applying
\cref{thm:intro-image-closure} gives the stated descent and closability, and
the response form without the factor $1/2$ is closed and Markovian.  Therefore
its positive scalar multiple $\cE_L$ is also a closed Markov form.
\end{proof}

\subsection{Ambient Gaussian realization}

We first construct the ambient Gaussian realization needed for the
absolute-continuity transfer.  Recall that \(h\) denotes the whole-plane GFF
modulo additive constants fixed in \cref{sec:intro-lqg-application}; as above,
\(h^\rho\) is its representative satisfying
\(\langle h^\rho,\rho\rangle=0\).  The construction uses weighted local
\(H^{-3}\)-norms.  The weights are chosen so that the Cameron--Martin space
embeds continuously and the GFF has finite second moment in the resulting
Hilbert space.

\begin{lemma}
\label{lem:ambient-hilbert-realization}
There is a separable Hilbert space
\[
 E\subset\mathcal D'(\C)/\R
\]
such that the canonical inclusion
\[
 \iota:K=\dot H^1(\C)/\R\longrightarrow E
\]
is continuous and injective, the inclusion
\(E\hookrightarrow\mathcal D'(\C)/\R\) is continuous, and \(h\) admits
an \(E\)-valued version satisfying
\begin{equation}
 \Ex\bigl[\|h\|_E^2\bigr]<\infty.
 \label{eq:ambient-gff-E-second-moment}
\end{equation}
\end{lemma}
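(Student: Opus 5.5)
The construction will be a weighted sum of local negative Sobolev norms. Fix a smooth partition of unity $(\chi_k)_{k\ge1}\subset C_c^\infty(\C)$, locally finite, with $\supp\chi_k\subset B(z_k,2)$ for centers $z_k$ on a unit lattice. For a class $[u]\in\mathcal D'(\C)/\R$ we work with the representative $u^\rho$ from the preceding subsection. We then set
\[
 \|[u]\|_E^2:=\sum_{k\ge1}w_k\,\|\chi_ku^\rho\|_{H^{-3}(\C)}^2 ,
\]
where the weights $w_k>0$ are chosen below. The space $E$ consists of the classes with finite norm. The map $[u]\mapsto(\chi_ku^\rho)_k$ embeds $E$ isometrically into the weighted $\ell^2$-sum of copies of $H^{-3}(\C)$. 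Its image is closed: if $\chi_ku_n^\rho$ converges in each $H^{-3}$, then $u_n^\rho$ converges in $\mathcal D'$ because the partition is locally finite, and $\langle u^\rho,\rho\rangle=0$ passes to the limit. Hence $E$ is a separable Hilbert space. For the same reason, $\sum_k\chi_k=1$ gives continuity of $E\hookrightarrow\mathcal D'(\C)/\R$: a test function $\phi$ meets only finitely many $\chi_k$, and $\langle u^\rho,\phi\rangle=\sum_k\langle\chi_ku^\rho,\phi\rangle$.

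Next comes the embedding of $K$. For $f\in K$, normalize $f^\rho$ by $\langle f^\rho,\rho\rangle=0$. A Poincaré-type estimate on balls, comparing the mean of $f^\rho$ over $B(z_k,2)$ with its $\rho$-mean along a chain of about $|z_k|$ overlapping unit balls, gives
\[
 \|\chi_kf^\rho\|_{H^{-3}}\le\|\chi_kf^\rho\|_{L^2}\le C(1+|z_k|)\|f\|_K .
\]
This bound is crude but sufficient. Choosing $w_k:=(1+|z_k|)^{-6}$ makes $\sum_kw_k(1+|z_k|)^2<\infty$, so $\iota$ is bounded. Injectivity holds because $\iota f=0$ forces $f^\rho=0$ in $\mathcal D'$.

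For the GFF, we compute the second moment term by term:
\[
 \Ex\|\chi_kh^\rho\|_{H^{-3}}^2=\iint\chi_k(z)\chi_k(w)\,k_3(z,w)\,\Ex[h^\rho(z)h^\rho(w)]\,\dd z\,\dd w,
\]
interpreted via smooth approximations, with $k_3$ the Bessel kernel of \cref{prop:measure-hilbertization}. The covariance of $h^\rho$ is $-\log|z-w|$ plus terms of size $O(\log(2+|z|)+\log(2+|w|))$ coming from the $\rho$-normalization. On $\supp\chi_k\times\supp\chi_k$ the logarithmic singularity is integrable and $k_3$ is bounded, so the $k$-th term is $O(\log^2(2+|z_k|))$. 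Since $\sum_kw_k\log^2(2+|z_k|)<\infty$, monotone convergence gives \eqref{eq:ambient-gff-E-second-moment}. The same bound shows that the series defining $\|h\|_E$ is almost surely finite. Together with measurability of each $\chi_kh^\rho$ as an $H^{-3}$-valued variable (it is a pairing of $h$ against countably many test functions), this yields an $E$-valued version of $h$.

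The main obstacle is the step handling the growth of the $\rho$-normalized field and of Cameron--Martin elements at infinity, that is, the covariance bound and the Poincaré chaining. I would first obtain both as explicit polynomial-in-$|z_k|$ bounds, and then absorb them with the steep weights $w_k$. The choice of $H^{-3}$ rather than a sharper index is made only to keep the local estimates trivially valid.
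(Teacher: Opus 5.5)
Your proposal is correct and is essentially the paper's construction. In both, $E$ is the space of classes whose weighted sum of localized $H^{-3}$-norms of the $\rho$-normalized representative is finite, realized isometrically as a closed subspace of a Hilbert direct sum; a local Poincar\'e bound gives $K\hookrightarrow E$, a second-moment computation for the localized field gives $\Ex\|h\|_E^2<\infty$, and countably many test-function pairings give measurability.

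The differences are only in implementation. The paper uses nested exhausting cutoffs $\chi_m$ rather than a lattice partition of unity, so closedness of the range is expressed through the compatibility relations $\chi_m w_{m+1}=w_m$ instead of your identity $u^\rho=\sum_k\chi_k u^\rho$. More importantly, the paper chooses the weights adaptively, $c_m=2^{-m}/(1+A_m+B_m)$ with $A_m=\Ex\|\chi_m h^\rho\|_{H^{-3}}^2$ and $B_m=\sup_{\|f\|_K\le1}\|\chi_m f^\rho\|_{H^{-3}}^2$. It therefore only needs these local constants to be finite, and never has to prove the polynomial and logarithmic growth bounds in $|z_k|$ that you single out as the main obstacle.
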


\begin{proof}
We first obtain uniform local bounds for the GFF and for Cameron--Martin
directions, then choose summable weights and verify the two required
embeddings.

Choose \(\chi_m\in C_c^\infty(\C)\), \(m\ge1\), such that
\[
 0\le\chi_m\le1,\qquad
 \chi_m\equiv1\text{ on }B_m(0)\cup\supp\rho,
 \qquad
 \chi_{m+1}\equiv1\text{ on }\supp\chi_m.
\]
For a distribution class \([u]\in\mathcal D'(\C)/\R\), let \(u^\rho\) be
its unique representative satisfying \(\langle u^\rho,\rho\rangle=0\).
For a compactly supported distribution \(v\), initially interpret its
\(H^{-3}\)-norm as the extended Fourier norm
\[
 \|v\|_{H^{-3}(\C)}^2
 :=\int_{\R^2}(1+|\zeta|^2)^{-3}
   |\langle v,e^{-iz\cdot\zeta}\rangle|^2
   \,\frac{\dd\zeta}{(2\pi)^2}\in[0,\infty].
\]
It is finite exactly when \(v\in H^{-3}(\C)\).  For
\(v=\chi_mh^\rho\), the scalar Fourier coordinates are measurable in the
underlying sample point for fixed \(\zeta\) and continuous in \(\zeta\) for
each sample point, hence jointly measurable.  Thus this formula defines a
measurable extended random variable.
Set
\begin{equation}
 A_m:=\Ex\!\left[\|\chi_m h^\rho\|_{H^{-3}(\C)}^2\right],
 \qquad
 B_m:=\sup_{\|f\|_K\le1}
       \|\chi_m f^\rho\|_{H^{-3}(\C)}^2.
 \label{eq:ambient-Am-Bm}
\end{equation}
Both quantities are finite.  Indeed, for
\[
 \psi_0^\rho:=\psi-\left(\int_\C\psi\,\dd z\right)\rho,
 \qquad \int_\C\psi_0^\rho\,\dd z=0,
\]
the zero-integral condition gives, for \(|\zeta|\le1\),
\[
 \begin{aligned}
 |\widehat{\psi_0^\rho}(\zeta)|
 &=\left|\int_\C
   \bigl(e^{-iz\cdot\zeta}-1\bigr)\psi_0^\rho(z)\,\dd z\right|\\
 &\le |\zeta|\int_\C |z|\,|\psi_0^\rho(z)|\,\dd z\\
 &\le C_{K,\rho}|\zeta|\,\|\psi\|_{H^1(\C)},
 \qquad \supp\psi\subset K.
 \end{aligned}
\]
Here \(\psi_0^\rho\) has support in a fixed compact set and
\(\|\psi_0^\rho\|_{L^1}\le C_{K,\rho}\|\psi\|_{H^1}\).  Hence
\[
 \int_{|\zeta|\le1}
 \frac{|\widehat{\psi_0^\rho}(\zeta)|^2}{|\zeta|^2}
 \,\frac{\dd\zeta}{(2\pi)^2}
 \le C_{K,\rho}\|\psi\|_{H^1(\C)}^2.
\]
For \(|\zeta|\ge1\), Plancherel gives
\[
 \begin{aligned}
 \int_{|\zeta|\ge1}
 \frac{|\widehat{\psi_0^\rho}(\zeta)|^2}{|\zeta|^2}
 \,\frac{\dd\zeta}{(2\pi)^2}
 &\le \|\psi_0^\rho\|_{L^2(\C)}^2\\
 &\le C_{K,\rho}\|\psi\|_{H^1(\C)}^2.
 \end{aligned}
\]
Combining the low- and high-frequency estimates with the covariance of the
whole-plane GFF \cite{Sheffield2007} gives
\begin{equation}
 \begin{aligned}
 \Ex\bigl[|\langle h^\rho,\psi\rangle|^2\bigr]
 &=2\pi\int_{\R^2}
   \frac{|\widehat{\psi_0^\rho}(\zeta)|^2}{|\zeta|^2}
   \,\frac{\dd\zeta}{(2\pi)^2}\\
 &\le C_{K,\rho}\|\psi\|_{H^1(\C)}^2,
 \qquad \supp\psi\subset K,
 \end{aligned}
 \label{eq:ambient-local-covariance-bound}
\end{equation}
for every compact \(K\subset\C\).  The same estimate holds for complex-valued
tests by complexification.  Moreover,
\[
 \|\chi_m e^{-iz\cdot\zeta}\|_{H^1(\C)}^2
 \le C_m(1+|\zeta|^2).
\]
Consequently, Tonelli's theorem gives
\[
 \begin{aligned}
 A_m
 &=\int_{\R^2}(1+|\zeta|^2)^{-3}
   \Ex\bigl[|\langle h^\rho,\chi_m e^{-iz\cdot\zeta}\rangle|^2\bigr]
   \,\frac{\dd\zeta}{(2\pi)^2}\\
 &\le C_m\int_{\R^2}(1+|\zeta|^2)^{-2}
   \,\frac{\dd\zeta}{(2\pi)^2}<\infty.
 \end{aligned}
\]
On the other hand, the local Poincar\'e inequality for the representative
\(f^\rho\) gives
\[
 \|\chi_m f^\rho\|_{H^{-3}(\C)}
 \le \|\chi_m f^\rho\|_{L^2(\C)}
 \le C_m\|f\|_K,
\]
so \(B_m<\infty\).

Define
\begin{equation}
 c_m:=\frac{2^{-m}}{1+A_m+B_m}>0
 \label{eq:ambient-cm-weights}
\end{equation}
and
\begin{equation}
 \begin{aligned}
 E
 &:=\left\{[u]\in\mathcal D'(\C)/\R:
      \|[u]\|_E^2<\infty\right\},\\
 \|[u]\|_E^2
 &:=\sum_{m\ge1}c_m
      \|\chi_m u^\rho\|_{H^{-3}(\C)}^2.
 \end{aligned}
 \label{eq:ambient-E-definition}
\end{equation}
The map
\[
 \mathcal I_E:E\longrightarrow\bigoplus_{m\ge1}H^{-3}(\C),
 \qquad
 \mathcal I_E([u])
 :=\bigl(c_m^{1/2}\chi_m u^\rho\bigr)_{m\ge1},
\]
is an isometry.  Its range is the closed subspace determined by
\[
 \supp v_m\subseteq\supp\chi_m,
 \qquad
 \chi_m\bigl(c_{m+1}^{-1/2}v_{m+1}\bigr)
 =c_m^{-1/2}v_m,
 \qquad
 \left\langle c_m^{-1/2}v_m,\rho\right\rangle=0,
 \qquad m\ge1.
\]
Indeed, these relations are closed in the Hilbert direct sum.  Conversely,
suppose that \(v=(v_m)_{m\ge1}\) satisfies them and put
\(w_m:=c_m^{-1/2}v_m\).  If
\(\phi\in C_c^\infty(\C)\), choose \(m\) with
\(\supp\phi\subset B_m(0)\) and set
\[
 \langle u^\rho,\phi\rangle:=\langle w_m,\phi\rangle.
\]
Since \(\chi_m\equiv1\) on \(B_m(0)\) and
\(\chi_mw_{m+1}=w_m\), this definition is independent of \(m\).
It defines \(u^\rho\in\mathcal D'(\C)\); the normalization relations give
\(\langle u^\rho,\rho\rangle=0\), and the compatibility relations give
\(\chi_m u^\rho=w_m\) for every \(m\).  Thus \(v=\mathcal I_E([u])\), so the
range is closed and \(E\) is a separable Hilbert space.
Furthermore,
\begin{equation}
 \|\iota f\|_E^2
 \le\left(\sum_{m\ge1}c_mB_m\right)\|f\|_K^2
 \le\|f\|_K^2,
 \qquad f\in K,
 \label{eq:ambient-K-to-E}
\end{equation}
so \(\iota:K\to E\) is continuous.  It is injective because a class in
\(K\) whose normalized representative vanishes as a distribution is the zero
class.

To verify the second embedding, let \(\mathcal B\subset C_c^\infty(\C)\) be
bounded.  Such a family has a common compact support, and
\[
 \mathcal B_0
 :=\left\{
 \phi-\left(\int_\C\phi\,\dd z\right)\rho:
 \phi\in\mathcal B
 \right\}
\]
is again bounded in \(C_c^\infty(\C)\), has a common compact support, and
annihilates constants.  Choose \(m\) such that \(\chi_m\equiv1\) on the common
support of \(\mathcal B_0\).  Then
\begin{equation}
 \begin{aligned}
 \sup_{\phi\in\mathcal B}|\langle u^\rho,\phi\rangle|
 &=\sup_{\psi\in\mathcal B_0}|\langle [u],\psi\rangle|\\
 &\le c_m^{-1/2}\|[u]\|_E
      \sup_{\psi\in\mathcal B_0}\|\psi\|_{H^3(\C)}.
 \end{aligned}
 \label{eq:ambient-E-to-distributions}
\end{equation}
The normalization projection
\[
 u\longmapsto u-\langle u,\rho\rangle\mathbf1
\]
is continuous for the strong topology of \(\mathcal D'(\C)\).  It factors
through the quotient and identifies \(\mathcal D'(\C)/\R\) with the closed
hyperplane of \(\rho\)-normalized
distributions.  Thus \eqref{eq:ambient-E-to-distributions} proves that
\(E\hookrightarrow\mathcal D'(\C)/\R\) is continuous for the strong quotient
topology.

Finally, by \eqref{eq:ambient-Am-Bm}--\eqref{eq:ambient-cm-weights}
and Tonelli's theorem,
\begin{equation}
 \Ex\!\left[\sum_{m\ge1}c_m
 \|\chi_mh^\rho\|_{H^{-3}(\C)}^2\right]
 =\sum_{m\ge1}c_mA_m
 \le\sum_{m\ge1}2^{-m}<\infty.
 \label{eq:ambient-gff-E-moment}
\end{equation}
Hence the displayed series is finite almost surely.  To see measurability,
choose a countable dense set \(\{\psi_j\}_{j\ge1}\subset C_c^\infty(\C)\) in
\(H^3(\C)\).  The Borel \(\sigma\)-field of the separable space
\(H^{-3}(\C)\) is generated by the coordinates against the \(\psi_j\), and
\[
 \langle \chi_mh^\rho,\psi_j\rangle
 =\langle h^\rho,\chi_m\psi_j\rangle
\]
is measurable for every \(j\).  Thus the \(H^{-3}\)-valued coordinate maps
\(\chi_mh^\rho\) are measurable, so
\[
 \bigl(c_m^{1/2}\chi_mh^\rho\bigr)_{m\ge1}
\]
is a measurable random element of the Hilbert direct sum and belongs almost
surely to the closed range of \(\mathcal I_E\).  Applying the continuous
inverse of \(\mathcal I_E\) on its range gives an \(E\)-valued version of \(h\)
and proves \eqref{eq:ambient-gff-E-second-moment}.
\end{proof}

\begin{lemma}
\label{lem:ambient-gaussian-structure}
Let \(E\) and \(\iota:K\to E\) be furnished by
\cref{lem:ambient-hilbert-realization}, and set
\[
 \Prob_h:=h_\#\Prob.
\]
Then the following hold.
\begin{enumerate}[label=\textnormal{(\roman*)}]
\item \(\Prob_h\) is a centered Gaussian Radon measure on \(E\).
\item If \(\iota^*:E'\to K\) is the Hilbert adjoint, then
\begin{equation}
 \int_E\ell_1(u)\ell_2(u)\,\Prob_h(\dd u)
 =\langle\iota^*\ell_1,\iota^*\ell_2\rangle_K,
 \qquad \ell_1,\ell_2\in E'.
 \label{eq:ambient-covariance-factorization}
\end{equation}
Equivalently, the covariance operator of \(\Prob_h\) is
\[
 Q=\iota\iota^*:E'\longrightarrow E.
\]
\item The Cameron--Martin space of \(\Prob_h\) is \(\iota(K)\), with
\[
 \|\iota f\|_{H(\Prob_h)}=\|f\|_K,
 \qquad f\in K.
\]
There is a unique linear isometry
\[
 W_E:K\longrightarrow L^2(\Prob_h)
\]
such that
\begin{equation}
 W_E(\iota^*\ell)=\ell
 \quad\text{in }L^2(\Prob_h),
 \qquad \ell\in E'.
 \label{eq:ambient-linear-functional}
\end{equation}
After identifying \(f\in K\) with \(\iota f\in E\), the Cameron--Martin
formula is
\begin{equation}
 \int_E\Phi(u+tf)\,\Prob_h(\dd u)
 =\int_E\Phi(u)
   \exp\!\left\{tW_E(f)(u)-\frac12t^2\|f\|_K^2\right\}\Prob_h(\dd u)
 \label{eq:ambient-cm-formula}
\end{equation}
for bounded Borel \(\Phi\), \(f\in K\), and \(t\in\R\).
\end{enumerate}
\end{lemma}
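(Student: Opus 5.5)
We plan to reduce everything to the finite-dimensional Gaussian structure of the GFF tested against smooth functions, and then use the abstract theory of Gaussian measures on separable Hilbert spaces. The starting point is the $E$-valued version of $h$ from \cref{lem:ambient-hilbert-realization}.

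\emph{Gaussianity.} Every $\ell\in E'$ is, by the Riesz identification and the isometry $\mathcal I_E$, a limit in $E'$ of finite combinations of functionals of the form $[u]\mapsto\langle\chi_mu^\rho,\psi\rangle$ with $\psi\in C_c^\infty(\C)$. We take the $\psi$ from the countable dense family $\{\psi_j\}$ used in that proof, paired through the $H^{-3}$–$H^3$ duality. On $h$ each such functional equals $\langle h^\rho,\chi_m\psi\rangle$, which is a centered Gaussian variable. By \eqref{eq:ambient-gff-E-second-moment}, $\ell(h)$ is an $L^2(\Prob)$-limit of centered Gaussians, hence centered Gaussian. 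Since $E$ is separable, $\Prob_h$ is Radon, and (i) follows.

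\emph{Covariance.} For (ii) we first treat $\ell$ of the form $\ell(u)=\langle u^\rho,\phi\rangle$ with $\phi\in C_c^\infty$. For $f\in K$ we then have $\ell(\iota f)=\langle f^\rho,\phi\rangle=\langle f,\phi_0^\rho\rangle_{L^2}$, where $\phi_0^\rho$ is the mean-zero test function built from $\phi$ and $\rho$. This gives $\iota^*\ell=2\pi(-\Delta)^{-1}\phi_0^\rho$ in $K$, and hence
\[
\|\iota^*\ell\|_K^2=2\pi\int\frac{|\widehat{\phi_0^\rho}(\zeta)|^2}{|\zeta|^2}\,\frac{\dd\zeta}{(2\pi)^2}.
\]
By \eqref{eq:ambient-local-covariance-bound} this equals $\Ex[\ell(h)^2]$. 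Polarization yields \eqref{eq:ambient-covariance-factorization} on a set of functionals. We then extend to all of $E'$ by density and continuity, since both sides are continuous in $\ell\in E'$: the left side by the second moment bound, the right side because $\iota^*$ is bounded. The main obstacle is to check that these distributional functionals are indeed dense in $E'$. Our plan is to use injectivity of $E\hookrightarrow\mathcal D'/\R$: if $e\in E$ is annihilated by all of them, then $e^\rho=0$ as a distribution, so $e=0$. The identity $Q=\iota\iota^*$ is a restatement of (ii).

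\emph{Cameron--Martin space.} For (iii) we invoke the standard characterization: the Cameron--Martin space of a Gaussian measure with covariance $\iota\iota^*$, where $\iota$ is an injective bounded operator from a Hilbert space, is $\iota(K)$ with the transported norm. Concretely, the reproducing kernel space is the completion of $\{Q\ell\}$ under $\|\iota^*\ell\|_K$. The range of $\iota^*$ is dense in $K$ because $\iota$ is injective, so this completion is isometric to $K$ via $\iota$.

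\emph{The map $W_E$ and the Cameron--Martin formula.} Define $W_E$ on the dense subspace $\iota^*(E')$ by \eqref{eq:ambient-linear-functional}. It is well defined and isometric by (ii): if $\iota^*\ell=0$, then $\ell=0$ in $L^2(\Prob_h)$. It therefore extends uniquely to a linear isometry on $K$. Finally, \eqref{eq:ambient-cm-formula} is the classical Cameron--Martin theorem for $\Prob_h$ with shift $\iota f$ in the Cameron--Martin space. We verify it first for $f=\iota^*\ell$ by comparing characteristic functionals. For general $f$ we approximate, using $L^2$ convergence of $W_E(f_n)$, convergence of the exponential densities in $L^1$, and total variation continuity of the shifted laws in the Cameron--Martin norm.
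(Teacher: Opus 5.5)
Your proposal is correct and follows the paper's proof essentially step for step: Gaussianity via $L^2$-limits of the smooth coordinate functionals; the covariance identity first for $\ell_\phi([u])=\langle u^\rho,\phi\rangle$ from the GFF covariance, then for all of $E'$ by density and continuity; dense range of $\iota^*$ from injectivity of $\iota$; and $W_E$ as the isometric extension of $\iota^*\ell\mapsto\ell$. The differences are minor. You prove density of the $\ell_\phi$ in $E'$ by an annihilator argument using $E\subset\mathcal D'(\C)/\R$, whereas the paper reads it off the direct-sum realization; you in fact already had this from (i), since $\langle\chi_m u^\rho,\psi\rangle=\ell_{\chi_m\psi}(u)$. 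You also sketch the Cameron--Martin formula instead of citing it, and there weak convergence of the shifted laws (from $\iota f_n\to\iota f$ in $E$) suffices in place of total-variation continuity.
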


\begin{proof}
For (i), the direct-sum realization in the proof of
\cref{lem:ambient-hilbert-realization} shows that restrictions to \(E\) of
finite-support coordinate functionals are dense in \(E'\).  Each such
coordinate functional is an \(E'\)-limit of finite linear combinations of
\[
 [u]\longmapsto\langle u^\rho,\psi\rangle,
 \qquad \psi\in C_c^\infty(\C),
\]
because \(C_c^\infty(\C)\) is dense in \(H^3(\C)\).  Their values on \(h\)
are centered Gaussian.  If \(\ell_j\to\ell\) in \(E'\), then
\[
 \Ex\bigl[|\ell_j(h)-\ell(h)|^2\bigr]
 \le \|\ell_j-\ell\|_{E'}^2\Ex\bigl[\|h\|_E^2\bigr]
 \longrightarrow0.
\]
Hence \(\ell(h)\) is centered Gaussian for every \(\ell\in E'\), so
\(\Prob_h\) is a centered Gaussian measure on \(E\).  Since \(E\) is
separable Hilbert, \(\Prob_h\) is Radon.

For (ii), first let
\[
 \ell_\psi([u]):=\langle u^\rho,\psi\rangle,
 \qquad \psi\in C_c^\infty(\C).
\]
The element \(\iota^*\ell_\psi\in K\) is characterized by
\[
 \langle \iota^*\ell_\psi,f\rangle_K
 =\ell_\psi(\iota f)
 =\langle f^\rho,\psi\rangle,
 \qquad f\in K.
\]
Therefore the defining covariance of the whole-plane GFF gives, for
\(\psi,\varphi\in C_c^\infty(\C)\),
\[
 \Ex\bigl[\ell_\psi(h)\ell_\varphi(h)\bigr]
 =\langle\iota^*\ell_\psi,\iota^*\ell_\varphi\rangle_K.
\]
The density used in part~(i), together with
\eqref{eq:ambient-gff-E-second-moment} and the continuity of
\(\iota^*:E'\to K\), extends this identity to all
\(\ell_1,\ell_2\in E'\), proving
\eqref{eq:ambient-covariance-factorization}.  By the definition of the
covariance operator, this is equivalent to \(Q=\iota\iota^*\).

For (iii), injectivity of \(\iota\) gives
\[
 \overline{\operatorname{Ran}\iota^*}^{\,K}
 =(\ker\iota)^\perp=K.
\]
By part~(ii), the rule
\[
 \iota^*\ell\longmapsto \ell
\]
is a well-defined isometry from \(\operatorname{Ran}\iota^*\) into
\(L^2(\Prob_h)\), so it extends uniquely to the linear isometry
\(W_E:K\to L^2(\Prob_h)\) in
\eqref{eq:ambient-linear-functional}.  The standard
Cameron--Martin-space characterization of a centered Gaussian measure
\cite{Bogachev1998}, applied to
\(Q=\iota\iota^*\) and the density of \(\operatorname{Ran}\iota^*\) in \(K\),
identifies the Cameron--Martin space with \(\iota(K)\), with norm transported
from \(K\).  Under this identification, \(W_E\) is the associated Gaussian
linear functional (Paley--Wiener map).  The Cameron--Martin theorem then gives
\eqref{eq:ambient-cm-formula}.
\end{proof}

Fix the space \(E\) furnished by \cref{lem:ambient-hilbert-realization}, and
identify \(K\) with its image under \(\iota\).  Using the same global Borel
LQG metric realization as above, define, for \(u\in E\) and \(e\in I\),
\begin{equation}
 R_e(u):=\log\frac{D_{u^\rho}(e)}{D_{u^\rho}(e_0)},
 \qquad
 J(u):=(R_e(u))_{e\in I}.
 \label{eq:ambient-ratio-map}
\end{equation}
The maps in \eqref{eq:ambient-ratio-map} are Borel because
\(E\hookrightarrow\mathcal D'(\C)/\R\), \([u]\mapsto u^\rho\), and the fixed
metric realization are Borel.  Here the first two maps are continuous for the
strong distribution topology, and the identity from the strong to the weak
distribution topology used for the metric realization is continuous.  These
maps agree \(\Prob_h\)-almost surely with the
versions used above, and therefore
\begin{equation}
 \mu_L=J_\#\Prob_h.
 \label{eq:ambient-image-law}
\end{equation}
Retain the orthonormal basis
\((f_n)_{n\ge1}\subset C_c^\infty(\C)\) of \(H_s\).  On the full
\(\Prob_h\)-measure set on which Weyl scaling holds simultaneously for all
continuous perturbations, constants cancel from the distance ratios and
\begin{equation}
 \begin{aligned}
 |R_e(u+af_n)-R_e(u+bf_n)|
 &\le \xi|a-b|\bigl(\sup_\C f_n-\inf_\C f_n\bigr)\\
 &\le2\xi|a-b|\,\|f_n\|_\infty,
 \qquad e\in I,\quad a,b\in\R.
 \end{aligned}
 \label{eq:ambient-line-Lipschitz}
\end{equation}
Pulling back \eqref{eq:ambient-cm-formula} by \(h\) gives the
Cameron--Martin identity used in \cref{prop:lqg-closed}; uniqueness of the
Gaussian logarithmic derivative therefore identifies \(W_E(f)\circ h\) with
the score \(W(f)\) used there.  We henceforth also write \(W(f)\) for
\(W_E(f)\) on \((E,\Prob_h)\).

\subsection{Energy-image-density on the ambient space}

We next introduce the directional Sobolev structure associated with the basis
\((f_n)_{n\ge1}\).  The goal is to obtain energy-image-density absolute
continuity on the ambient Gaussian space before transferring it to the
projective metric law.

For \(0\ne f\in E\), let \(AC_f\) denote the \(L^2(\Prob_h)\)-classes
admitting a Borel representative locally absolutely continuous on every
affine line parallel to \(f\).  If \(0\ne f\in K\), \(U\in AC_f\), and
\(\widetilde U\) is such a representative, set
\[
 \partial_fU
 :=\left[
 u\longmapsto
 \left.\frac{\dd}{\dd t}\right|_{t=0}\widetilde U(u+tf)
 \right]_{L^0(\Prob_h)}.
\]
For $0\ne f\in K$, Cameron--Martin quasi-invariance and Fubini's theorem show that
this derivative exists \(\Prob_h\)-almost everywhere and that its
\(L^0(\Prob_h)\)-class is independent of the chosen representative.  Let
\(\cD_s\) consist of the \(U\in L^2(\Prob_h)\) such that
\[
 U\in\bigcap_{n\ge1}AC_{f_n},
 \qquad
 \sum_{n\ge1}|\partial_{f_n}U|^2\in L^1(\Prob_h).
\]
For \(U,Z\in\cD_s\), define
\[
 \nabla_sU:=\sum_{n\ge1}\partial_{f_n}U\,f_n,
 \qquad
 \Gamma_s(U,Z):=\langle\nabla_sU,\nabla_sZ\rangle_{H_s},
 \qquad
 \cE_s(U,Z):=\frac12\int_E\Gamma_s(U,Z)\,\dd\Prob_h.
\]
Then
\[
 \|\nabla_sU\|_{L^2(\Prob_h;H_s)}^2
 =\int_E\sum_{n\ge1}|\partial_{f_n}U|^2\,\dd\Prob_h<\infty.
\]
For \(F=(F^1,\ldots,F^m)\in(\cD_s)^m\), write
\[
 \Gamma_s[F]:=\bigl(\Gamma_s(F^i,F^j)\bigr)_{i,j=1}^m.
\]

\begin{lemma}
\label{lem:ambient-directional-eid}
With the preceding definitions, the following hold.
\begin{enumerate}[label=\textnormal{(\roman*)}]
\item The operator
\[
 \nabla_s:\cD_s\subset L^2(\Prob_h)
 \longrightarrow L^2(\Prob_h;H_s)
\]
is closed.
\item For every \(m\ge1\) and \(F\in(\cD_s)^m\),
\begin{equation}
 F_\#\bigl(\det\Gamma_s[F]\cdot\Prob_h\bigr)\ll\mathcal L^m.
\label{eq:ambient-eid}
\end{equation}
\end{enumerate}
\end{lemma}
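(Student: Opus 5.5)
The plan is to reduce both parts to the classical theory of directional derivatives along Cameron--Martin directions on an abstract Wiener space, and then to apply Bouleau--Hirsch energy-image-density for the closed Gaussian form. Concretely, \((E,\iota(K),\Prob_h)\) is an abstract Wiener space by \cref{lem:ambient-gaussian-structure}. The vectors \(f_n\in C_c^\infty(\C)\subset H_s\subset K\) are Cameron--Martin directions, although they are not orthonormal in \(K\).

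For (i), fix \(n\) and consider \(\partial_{f_n}\) on \(AC_{f_n}\cap L^2\) with \(\partial_{f_n}U\in L^2\). The first step is closedness of each single-direction operator. Disintegrate \(\Prob_h\) along lines parallel to \(f_n\): write \(u=v+t\,f_n/\|f_n\|_K\) with \(t=W(f_n)(u)/\|f_n\|_K\), where \(t\) is a standard Gaussian independent of \(v\). In this coordinate, \(AC_{f_n}\) is the space of functions absolutely continuous in \(t\) for a.e.\ \(v\), and the derivative is the one-dimensional weak derivative. Closedness then reduces to the one-dimensional fact that on \((\R,N(0,1))\) the operator \(d/dt\) with the \(AC\) domain is closed. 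This is Hamza's condition, since the Gaussian density is positive and continuous. It passes to the product by Fubini: if \(U_k\to U\) and \(\partial_{f_n}U_k\to Z\) in \(L^2\), then along a subsequence the convergence is fiberwise in \(L^2\) for a.e.\ \(v\). Fiberwise closedness gives an absolutely continuous fiber representative with derivative \(Z\), and measurable dependence on \(v\) yields a Borel representative.

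Having this for every \(n\), take \(U_k\in\cD_s\) with \(U_k\to U\) in \(L^2\) and \(\nabla_sU_k\to\Xi\) in \(L^2(\Prob_h;H_s)\). Because \((f_n)\) is orthonormal in \(H_s\), each coordinate \(\langle\nabla_sU_k,f_n\rangle_{H_s}=\partial_{f_n}U_k\) converges in \(L^2\) to \(\langle\Xi,f_n\rangle\). Single-direction closedness then gives \(U\in AC_{f_n}\) with \(\partial_{f_n}U=\langle\Xi,f_n\rangle\). Summing the squares shows \(U\in\cD_s\) and \(\nabla_sU=\Xi\).

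For (ii), I would identify \(\cE_s\) as a closed Dirichlet form that admits a carré du champ \(\Gamma_s\). This form is the classical gradient form built from countably many directions, of the type studied by Albeverio--Röckner and Bouleau--Hirsch. It should be described as a countable product-type ``Dirichlet structure'': for each \(n\), the one-dimensional Ornstein--Uhlenbeck-type form along the line \(f_n\) is a structure satisfying \(EID\). There are two delicate points. The first is that the \(f_n\) are not \(K\)-orthogonal, so the measure is not literally a product in these coordinates. I would handle this by the Bouleau--Hirsch criterion, which only needs a countable family of directions along which the measure disintegrates with absolutely continuous conditional laws; for Gaussian measures and Cameron--Martin directions this holds. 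The second is to verify the chain rule \(\nabla_s\varphi(F)=\sum_i\partial_i\varphi(F)\nabla_sF^i\) for \(\varphi\in C^1_b\), which follows fiberwise from the one-dimensional absolute-continuity chain rule.

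With these in hand, I would invoke the Bouleau--Hirsch theorem. Its proof proceeds by approximating with finitely many directions \(f_1,\dots,f_N\). On the finite-dimensional span, Fubini reduces the statement to Federer's coarea/area formula for Sobolev maps on \(\R^N\) with absolutely continuous measure. The approximation uses the fact that \(\det\bigl(\sum_{n\le N}\partial_{f_n}F^i\partial_{f_n}F^j\bigr)\) increases to \(\det\Gamma_s[F]\) as \(N\to\infty\); the truncated matrices increase in the Loewner order, so their determinants increase. I expect the main obstacle to be exactly this finite-direction EID step. A set \(A\) of Lebesgue measure zero must satisfy \(\int\mathbf 1_A(F)\det\Gamma^{(N)}[F]\,\dd\Prob_h=0\). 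I would prove this by conditioning on the orthogonal complement (in \(K\)) of \(\Span\{f_1,\dots,f_N\}\), where the conditional law is a nondegenerate Gaussian on an \(N\)-dimensional affine space. On each such fiber, \(F\) is a map \(\R^N\to\R^m\) with ACL coordinates and locally integrable derivatives, and the area formula with \(m\le N\) gives the null statement, via the standard Bouleau--Hirsch induction on \(m\). Monotone convergence in \(N\) then gives \eqref{eq:ambient-eid}.
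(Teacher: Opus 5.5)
Your proposal is correct. For (ii) it uses the same tool as the paper: the Bouleau--Hirsch energy-image-density theorem for the form built from countably many admissible directions. The routes differ in what is proved by hand. The paper proves nothing directly. It checks the three hypotheses of Bouleau--Hirsch's Proposition~5 and Theorem~3 and then reads off closedness and the energy-image-density statement. Those hypotheses are:
\begin{itemize}
\item Cameron--Martin admissibility of each $f_n$;
\item strict admissibility, via the everywhere positive version $k_f=\frac{a}{\sqrt{2\pi}}e^{-w_f^2/(2a^2)}$ of $\dd\Prob_h/\dd\sigma_f$, which is bounded below on bounded segments of $f$-lines;
\item $\sum_n|\ell(f_n)|^2=\|\ell|_{H_s}\|_{H_s^*}^2<\infty$, by Parseval.
\end{itemize}
You instead prove (i) directly: one-dimensional Gaussian disintegration along each $f_n$, Hamza closedness on $(\R,N(0,1))$, Fubini, and assembly of the coordinates through $H_s$-orthonormality. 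For (ii) you sketch the mechanism behind the cited theorem: finite-direction Gaussian fibres, the coarea formula for Sobolev maps, and Loewner-monotone convergence of the truncated Gram determinants. This makes (i) self-contained, and your fibre argument for (ii) never uses the summability condition. The cost is more measure-theoretic bookkeeping than a citation.

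Two points in your route should be made explicit.
\begin{itemize}
\item The decomposition $u=v+t\,f_n/\|f_n\|_K$ needs a Borel version $w$ of $W(f_n)$ that is exactly affine on every $f_n$-line, i.e.\ $w(u+tf_n)=w(u)+t\|f_n\|_K^2$ for all $u\in E$ and $t\in\R$. The paper builds such a version as $w_f$ from the full-measure Borel subspace $L_f\ni f$. This is what makes $v$ a genuine Borel function, constant on each $f_n$-line. That in turn lets you pass from almost every fibre to a representative that is absolutely continuous on \emph{every} $f_n$-line, as membership in $AC_{f_n}$ requires.
\item If you cite Bouleau--Hirsch as a theorem rather than rerunning your fibre argument, its hypotheses are not just ``absolutely continuous conditional laws''. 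You also need the lower bound on the line densities and the summability $\sum_n|\ell(f_n)|^2<\infty$ for $\ell\in E'$. Both hold here, for the reasons listed above.
\end{itemize}
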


\begin{proof}
The cited criterion has three inputs in the present setting: admissibility of
the directions, strict positivity of the corresponding line densities on
bounded line segments, and square summability against every continuous linear
functional.  We verify these inputs in this order.

By the Cameron--Martin formula, every \(0\ne f_n\in H_s\subset K\) is
admissible in the sense of
\cite{BouleauHirsch1986AbsoluteContinuity}:
\[
 \Prob_h(\,\cdot-tf_n)\sim\Prob_h,
 \qquad t\in\R.
\]
By \cite[Proposition~5 and Theorem~3]{BouleauHirsch1986AbsoluteContinuity}, it
is enough to verify, for every \(n\), that
\[
 \sigma_{f_n}(A):=\int_\R\Prob_h(A-tf_n)\,\dd t,
 \qquad A\in\mathcal B(E),
\]
is equivalent to \(\Prob_h\) with a positive density
\(k_n=\dd\Prob_h/\dd\sigma_{f_n}\) satisfying
\[
 \inf_{|t|\le T}k_n(u+tf_n)>0
 \qquad(u\in E,\ T>0),
\]
and that
\[
 \sum_{n\ge1}|\ell(f_n)|^2<\infty,
 \qquad \ell\in E'.
\]
Here \(E'\) is the continuous dual of \(E\).

Fix \(0\ne f\in H_s\) and put \(a:=\|f\|_K\).  Let
\[
 \sigma_f(A):=\int_\R\Prob_h(A-tf)\,\dd t.
\]
Under the Riesz identification, the adjoint
\(\iota^*:E'\to K\) has dense range.  Choose \(\ell_j\in E'\) with
\(\iota^*\ell_j\to f\) in \(K\).  Since
\(W(\iota^*\ell)(u)=\ell(u)\) for \(\Prob_h\)-a.e. \(u\),
\[
 \|\ell_j-W(f)\|_{L^2(\Prob_h)}
 =\|\iota^*\ell_j-f\|_K\longrightarrow0.
\]
After passing to a subsequence, \(\ell_j(u)\to W(f)(u)\) for
\(\Prob_h\)-a.e. \(u\).  Hence
\[
 L_f:=\{u\in E:\lim_j\ell_j(u)\text{ exists in }\R\}
 \in\mathcal B(E),
 \qquad
 \Prob_h(L_f)=1.
\]
Since the \(\ell_j\) are linear, \(L_f\) is a vector subspace of \(E\).
Moreover,
\[
 \ell_j(f)=\langle\iota^*\ell_j,f\rangle_K\longrightarrow a^2,
\]
so \(f\in L_f\).  By Hahn--Banach choose \(\lambda\in E'\) with
\(\lambda(f)=1\), and define
\[
 w_f(u):=
 \begin{cases}
 \displaystyle
 \lim_j\ell_j\bigl(u-\lambda(u)f\bigr)+\lambda(u)a^2,
 &u-\lambda(u)f\in L_f,\\[4pt]
 \lambda(u)a^2,&u-\lambda(u)f\notin L_f.
 \end{cases}
\]
Then \(w_f\) is Borel measurable and
\[
 w_f(u+tf)=w_f(u)+ta^2,
 \qquad u\in E,\ t\in\R.
\]
On \(L_f\),
\[
 w_f(u)
 =\lim_j\ell_j(u)-\lambda(u)\lim_j\ell_j(f)+\lambda(u)a^2
 =\lim_j\ell_j(u),
\]
so \(w_f=W(f)\) \(\Prob_h\)-a.e.

For \(A\in\mathcal B(E)\), the Cameron--Martin formula and Tonelli give
\[
\begin{aligned}
 \sigma_f(A)
 &=\int_\R\Prob_h(A-tf)\,\dd t\\
 &=\int_A\!\int_\R
   \exp\!\left\{tw_f(u)-\frac12t^2a^2\right\}\dd t\,\Prob_h(\dd u)\\
 &=\int_A\frac{\sqrt{2\pi}}a
   \exp\!\left\{\frac{w_f(u)^2}{2a^2}\right\}\Prob_h(\dd u).
\end{aligned}
\]
Thus \(\sigma_f\sim\Prob_h\), and
\[
 k_f(u)=\frac a{\sqrt{2\pi}}
 \exp\!\left\{-\frac{w_f(u)^2}{2a^2}\right\}
\]
is an everywhere positive version of \(\dd\Prob_h/\dd\sigma_f\).  Hence, for
\(u\in E\) and \(T>0\),
\[
 \inf_{|t|\le T}k_f(u+tf)
 \ge\frac a{\sqrt{2\pi}}
 \exp\!\left\{-\frac{(|w_f(u)|+Ta^2)^2}{2a^2}\right\}>0.
\]
Together with Cameron--Martin admissibility, this verifies strict
admissibility of every \(f_n\).

Finally, the continuous embedding \(H_s\hookrightarrow E\) gives
\(\ell|_{H_s}\in H_s^*\) for \(\ell\in E'\), and Parseval yields
\[
 \sum_{n\ge1}|\ell(f_n)|^2
 =\|\ell|_{H_s}\|_{H_s^*}^2<\infty.
\]
With all three inputs verified, Proposition~5 gives the closed form
\((\cE_s,\cD_s)\).  Since
\[
 \|U\|_{L^2(\Prob_h)}^2+2\cE_s(U,U)
 =\|U\|_{L^2(\Prob_h)}^2
  +\|\nabla_sU\|_{L^2(\Prob_h;H_s)}^2,
\]
this is equivalent to the closedness of \(\nabla_s\), while Theorem~3 gives
\eqref{eq:ambient-eid}.
\end{proof}

We will also use the consequence of the proof that
\[
 \sigma_{f_n}\sim\Prob_h,
 \qquad n\ge1.
\]

\begin{lemma}
\label{lem:linewise-measurable-repair}
Let \(0\ne f\in E\) and let \(U:E\to\R\) be Borel with
\(U\in L^2(\Prob_h)\).  Suppose there exists \(C<\infty\) such that, for
\(\Prob_h\)-a.e. \(u\),
\[
 |U(u+tf)-U(u+sf)|\le C|t-s|,
 \qquad s,t\in\R.
\]
Then there is a Borel map
\(\widehat U:E\to\R\) such that
\begin{enumerate}[label=\textnormal{(\roman*)}]
\item \(t\mapsto\widehat U(u+tf)\) is \(C\)-Lipschitz on \(\R\) for every
      \(u\in E\);
\item for \(\Prob_h\)-a.e. \(u\),
\[
 \widehat U(u+tf)=U(u+tf),
 \qquad t\in\R.
\]
\end{enumerate}
In particular, \(\widehat U=U\) \(\Prob_h\)-a.e. and the
\(L^2(\Prob_h)\)-class of \(U\) belongs to \(AC_f\).
\end{lemma}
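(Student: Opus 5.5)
We will construct \(\widehat U\) by restricting \(U\) to a dense countable set of line parameters and taking a Lipschitz extension along each line. Because the construction only uses the countably many values \(U(u+qf)\), \(q\in\mathbb Q\), measurability is automatic. For \(u\in E\), set
\[
 \Lambda(u):=\sup_{\substack{q,r\in\mathbb Q\\ q\ne r}}
 \frac{|U(u+qf)-U(u+rf)|}{|q-r|}\in[0,\infty].
\]
This is a Borel function of \(u\), as a countable supremum of Borel functions. Since \(\Lambda(u+tf)\) is obtained from \(\Lambda(u)\) by a rational reindexing only when \(t\in\mathbb Q\), we work with the stronger line-invariant set
\[
 G:=\Bigl\{u\in E:\ |U(u+tf)-U(u+sf)|\le C|t-s|\ \ \forall s,t\in\R\Bigr\}.
\]
This set is invariant under \(u\mapsto u+tf\) for every \(t\in\R\), and \(\Prob_h(G)=1\) by hypothesis, at least after taking a measurable subset of full measure. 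Because \(G\) itself need not be Borel, we instead use the Borel set
\[
 G_{\mathbb Q}:=\{u:\Lambda(u)\le C\}.
\]
It contains \(G\), so it has full (inner) measure, and it is invariant under all real translations along \(f\).

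The invariance of \(G_{\mathbb Q}\) is the delicate point. The quantity \(\Lambda(u+tf)\) involves the values of \(U\) at \(u+(t+q)f\), which are not rational points of the line through \(u\) when \(t\notin\mathbb Q\). To avoid this, we fix a Borel transversal parametrization, exactly as in the proof of \cref{lem:ambient-directional-eid}. Choose \(\lambda\in E'\) with \(\lambda(f)=1\), set \(\pi(u):=u-\lambda(u)f\), and write \(u=\pi(u)+\lambda(u)f\). Then \(\pi\) is continuous linear, constant along lines parallel to \(f\), and \(\lambda(u+tf)=\lambda(u)+t\). We define
\[
 \Lambda(u):=\sup_{q\ne r\in\mathbb Q}\frac{|U(\pi(u)+qf)-U(\pi(u)+rf)|}{|q-r|},
 \qquad G_{\mathbb Q}:=\{\Lambda\le C\}.
\]
Both \(\Lambda\) and \(G_{\mathbb Q}\) depend only on \(\pi(u)\), so they are Borel and genuinely line-invariant. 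Every \(u\) in the hypothesis's full-measure set lies in \(G_{\mathbb Q}\), because its line is the line through \(\pi(u)\); hence \(\Prob_h(G_{\mathbb Q})=1\). For \(u\in G_{\mathbb Q}\), the function \(q\mapsto U(\pi(u)+qf)\) is \(C\)-Lipschitz on \(\mathbb Q\) and so has a unique \(C\)-Lipschitz extension \(\Phi_u\) to \(\R\). We put
\[
 \widehat U(u):=\lim_{j\to\infty}U\bigl(\pi(u)+q_j(u)f\bigr)\quad(u\in G_{\mathbb Q}),
\]
where \(q_j(u):=\lfloor 2^j\lambda(u)\rfloor 2^{-j}\) is a Borel rational approximation of \(\lambda(u)\). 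Off \(G_{\mathbb Q}\) we set \(\widehat U:=0\). Then \(\widehat U(u)=\Phi_u(\lambda(u))\); the limit exists by the Lipschitz bound, and \(\widehat U\) is Borel as a pointwise limit of Borel maps. Since \(\pi(u+tf)=\pi(u)\) and \(\lambda(u+tf)=\lambda(u)+t\), we get \(\widehat U(u+tf)=\Phi_u(\lambda(u)+t)\). This is \(C\)-Lipschitz in \(t\) on \(G_{\mathbb Q}\) and constant off it, which proves (i).

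For (ii), take \(u\) in the hypothesis's full-measure set. The map \(t\mapsto U(u+tf)=U(\pi(u)+(\lambda(u)+t)f)\) is continuous and agrees with \(\Phi_u(\lambda(u)+t)\) whenever \(\lambda(u)+t\in\mathbb Q\). By density and continuity of both sides, it agrees for all \(t\). Taking \(t=0\) gives \(\widehat U=U\) \(\Prob_h\)-a.e. Finally, \(\widehat U\) is a Borel representative that is Lipschitz, hence locally absolutely continuous, on every affine line parallel to \(f\), so the class of \(U\) lies in \(AC_f\). The main obstacle is the one addressed first: the naive full-measure set is not invariant under non-rational shifts, which is why we parametrize lines through the transversal projection \(\pi\) rather than through base points.
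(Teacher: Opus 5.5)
Your proposal is correct and follows the paper's proof essentially step for step. Both use the transversal projection $u\mapsto u-\lambda(u)f$ with $\lambda(f)=1$, the Borel set of transversal points whose rational line restriction is $C$-Lipschitz, the dyadic approximations $2^{-j}\lfloor 2^j\lambda(u)\rfloor$ to define the Lipschitz extension as a Borel limit, and uniqueness of the Lipschitz extension to obtain (ii). You should delete the first, base-point-indexed definition of $\Lambda$ and $G_{\mathbb Q}$, together with the incorrect claim that it is line-invariant, since only the redefinition through $\pi(u)$ is actually used.
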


\begin{proof}
We construct a common Lipschitz version by choosing a measurable transversal
to the affine lines parallel to \(f\) and extending the rational restrictions
on each such line.

By Hahn--Banach choose \(\lambda\in E'\) with \(\lambda(f)=1\), and put
\[
 q(u):=u-\lambda(u)f\in\ker\lambda.
\]
For \(x\in\ker\lambda\), let \(r\mapsto U(x+rf)\) be restricted to
\(\mathbb Q\), and let \(B\subset\ker\lambda\) be the set on which this
restriction is \(C\)-Lipschitz.  Equivalently,
\[
 B=\bigl\{x\in\ker\lambda:
 |U(x+rf)-U(x+sf)|\le C|r-s|\ \text{for all }r,s\in\mathbb Q\bigr\}.
\]
The set \(B\) is a Borel subset of \(\ker\lambda\).  For \(x\in B\), denote by \(L_x:\R\to\R\) the
unique \(C\)-Lipschitz extension.  If
\[
 r_k(t):=2^{-k}\lfloor2^kt\rfloor,
\]
then
\[
 L_x(t)=\lim_{k\to\infty}U(x+r_k(t)f),
 \qquad x\in B.
\]
Define
\[
 \widehat U(u):=
 \begin{cases}
 L_{q(u)}(\lambda(u)),&q(u)\in B,\\
 0,&q(u)\notin B.
 \end{cases}
\]
On \(q^{-1}(B)\),
\[
 \widehat U(u)
 =\lim_{k\to\infty}
 U\bigl(q(u)+r_k(\lambda(u))f\bigr),
\]
so \(\widehat U\) is Borel measurable; it is zero on the Borel complement.
Since
\[
 q(u+tf)=q(u),
 \qquad
 \lambda(u+tf)=\lambda(u)+t,
\]
every affine \(f\)-line restriction of \(\widehat U\) is \(C\)-Lipschitz.
If the original restriction through \(u\) is \(C\)-Lipschitz, then
\(q(u)\in B\) and uniqueness of the Lipschitz extension gives
\[
 \widehat U(u+tf)=U(u+tf),
 \qquad t\in\R.
\]
This proves (i)--(ii).  Taking \(t=0\) in (ii) gives
\(\widehat U=U\) \(\Prob_h\)-a.e., hence the class of \(U\) lies in \(AC_f\).
\end{proof}

\subsection{Exact pullback to the projective metric law}

It remains to identify the ambient directional gradient of a pulled-back image
function with the closed gradient already constructed on the projective metric
law.  We first prove the identity on the cylinder core and then pass to the
closed domain.

Set \(W_L^{1,2}:=\Dom(D_L)\).  For \(F,G\in W_L^{1,2}\), define
\[
 \Gamma_L(F,G)(x):=\langle D_LF(x),D_LG(x)\rangle_{H_s}
 \quad\text{for }\mu_L\text{-a.e. }x.
\]

\begin{proposition}
\label{prop:lqg-exact-pullback}
With the preceding definitions, the following hold.
\begin{enumerate}[label=\textnormal{(\roman*)}]
\item For every \(F\in W_L^{1,2}\),
\[
 F(J)\in\cD_s,
 \qquad
 \nabla_s(F(J))=(D_LF)(J).
\]
\item For \(F,G\in W_L^{1,2}\),
\begin{equation}
 \Gamma_s(F(J),G(J))=\Gamma_L(F,G)(J)
 \quad\Prob_h\text{-a.e.}
\label{eq:lqg-exact-pullback}
\end{equation}
\end{enumerate}
In particular,
\begin{equation}
 \cE_s(F(J),G(J))=\cE_L(F,G),
 \qquad F,G\in W_L^{1,2}.
 \label{eq:lqg-exact-form-pullback}
\end{equation}
\end{proposition}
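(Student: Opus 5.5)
The plan is to prove (i) first on the cylinder core $\Cyl_L$, then pass to all of $W_L^{1,2}$ using the closedness of $\nabla_s$ from \cref{lem:ambient-directional-eid}(i). Part (ii) and \eqref{eq:lqg-exact-form-pullback} then follow by taking $H_s$-inner products and integrating against $\Prob_h$, using $\mu_L=J_\#\Prob_h$ from \eqref{eq:ambient-image-law}.

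\emph{Core step.} Fix $P=(\varphi;e_1,\ldots,e_k)\in\Pres_L$ and put $U:=F_P(J)=\varphi(R_{e_1},\ldots,R_{e_k})$ on $E$. This is Borel and bounded. By \eqref{eq:ambient-line-Lipschitz}, for $\Prob_h$-a.e.\ $u$ the map $t\mapsto U(u+tf_n)$ is Lipschitz with constant $C_n=2\xi\|f_n\|_\infty\sum_i\|\partial_i\varphi\|_\infty$. \Cref{lem:linewise-measurable-repair} then gives a linewise Lipschitz Borel representative, so $U\in AC_{f_n}$ for every $n$.

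To identify $\partial_{f_n}U$ we need the pathwise response along every point of almost every line, not only at $t=0$. By Cameron--Martin quasi-invariance, $u+tf_n$ lies in the full-measure set $\Omega_{\mathrm{LQG}}$ (transported to $E$) for a.e.\ $t$, for $\Prob_h$-a.e.\ $u$. At such points \cref{prop:lqg-response} and the chain rule give
\[
 \frac{\dd}{\dd t}U(u+tf_n)=\langle G_P(J(u+tf_n)),f_n\rangle_{H_s}.
\]
Here we use \cref{prop:lqg-gradient-measurable} to replace $\widetilde g_e$ by $g_e\circ J$; this is legitimate a.e.\ along lines by the same Fubini argument, using $\sigma_{f_n}\sim\Prob_h$. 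Hence
\[
 \partial_{f_n}U=\langle (D_{L,0}F_P)(J),f_n\rangle_{H_s}\quad\Prob_h\text{-a.e.}
\]
Parseval in $H_s$ then yields
\[
 \sum_n|\partial_{f_n}U|^2=\|(D_{L,0}F_P)(J)\|_{H_s}^2,
\]
which is bounded by \cref{prop:lqg-gradient-measurable}. So $U\in\cD_s$ and $\nabla_sU=(D_{L,0}F_P)(J)$.

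\emph{Closure step.} For $F\in W_L^{1,2}$, choose $F_k\in\Cyl_L$ with $F_k\to F$ and $D_{L,0}F_k\to D_LF$ in graph norm. Since $\mu_L=J_\#\Prob_h$, composition with $J$ is an isometry $L^2(\mu_L)\to L^2(\Prob_h)$ and $L^2(\mu_L;H_s)\to L^2(\Prob_h;H_s)$. Therefore $F_k(J)\to F(J)$ and $\nabla_s(F_k(J))=(D_{L,0}F_k)(J)\to(D_LF)(J)$. Closedness of $\nabla_s$ gives $F(J)\in\cD_s$ with $\nabla_s(F(J))=(D_LF)(J)$, which proves (i).

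For (ii), compute
\[
 \Gamma_s(F(J),G(J))=\langle (D_LF)(J),(D_LG)(J)\rangle_{H_s}=\Gamma_L(F,G)(J).
\]
Integrating, with the factor $\tfrac12$ present in both $\cE_s$ and $\cE_L$, gives \eqref{eq:lqg-exact-form-pullback}.

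\emph{Main obstacle.} The delicate point is the core identification of the ambient derivative $\partial_{f_n}(F_P(J))$ with $\langle G_P(J),f_n\rangle$. The derivative in \cref{prop:lqg-response} is computed pathwise at $t=0$ on $\Omega_{\mathrm{LQG}}$, whereas $AC_{f_n}$ derivatives are defined via a repaired representative. I would handle this through the following chain:
\begin{itemize}
\item the a.e.\ derivative of a Lipschitz function along the line equals the pathwise derivative wherever the latter exists;
\item Fubini along lines, using $\sigma_{f_n}\sim\Prob_h$, to transfer full-measure statements from $\Prob_h$ to a.e.\ point of a.e.\ line;
\item the $\Prob_h$-a.e.\ equality $g_e(J)=\widetilde g_e$ from \cref{prop:lqg-gradient-measurable}.
\end{itemize}
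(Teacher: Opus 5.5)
Your proposal is correct and follows the paper's proof essentially step for step. The shared steps are: the linewise Lipschitz repair on the cylinder core; a single full-measure set on which the pathwise response and $g_e(J)=\widetilde g_e$ hold simultaneously, transferred to almost every point of almost every $f_n$-line via quasi-invariance and $\sigma_{f_n}\sim\Prob_h$; Parseval; and closure using the closedness of $\nabla_s$ and the isometry coming from $\mu_L=J_\#\Prob_h$. The one point to state explicitly, which the paper handles by intersecting with the full-measure set from the repair lemma, is that you differentiate the repaired representative, and that this representative agrees with $U$ on the entire line through almost every base point.
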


\begin{proof}
We begin with the cylinder core, where the pathwise response formula is
available along every basis direction after choosing suitable linewise
representatives.

First let \(P=(\varphi;e_1,\ldots,e_N)\in\Pres_L\) and put
\[
 U_P(u):=F_P(J(u)),
 \qquad u\in E.
\]
The Weyl comparison fixed above gives, for every \(n\ge1\) and
\(\Prob_h\)-almost every \(u\),
\[
 |U_P(u+af_n)-U_P(u+bf_n)|
 \le C_{P,n}|a-b|,
 \qquad a,b\in\R,
\]
where
\[
 C_{P,n}:=2\xi\|f_n\|_\infty
 \sum_{i=1}^N\|\partial_i\varphi\|_\infty.
\]
By \cref{lem:linewise-measurable-repair}, \(U_P\in AC_{f_n}\).  Let
\(\widehat U_{P,n}\) be the repaired Borel representative furnished by that
lemma.

The response identities in
\cref{prop:lqg-response,prop:measure-hilbertization,prop:lqg-gradient-measurable}
are constructed from the same global metric realization.  Therefore their
combined identity transfers to the canonical \(E\)-realization under
\(\Prob_h\).  By countability of \(I\times\mathbb N\), choose a Borel set
\(\Omega_*\subset E\) with \(\Prob_h(\Omega_*)=1\) such that, for
\(u\in\Omega_*\), \(e\in I\), and \(n\ge1\),
\[
 \left.\frac{\dd}{\dd t}\right|_{t=0}R_e(u+tf_n)
 =\langle g_e(J(u)),f_n\rangle_{H_s}.
\]
Quasi-invariance gives, for \(T>0\),
\[
 \int_E\int_{-T}^T
 \mathbf1_{\Omega_*^c}(u+tf_n)\,\dd t\,\Prob_h(\dd u)=0.
\]
Hence, for \(\Prob_h\)-a.e. base point \(u\), one has
\(u+tf_n\in\Omega_*\) for Lebesgue-a.e. \(t\).  Intersecting with the
full-measure set supplied by \cref{lem:linewise-measurable-repair}, we may also
assume
\[
 \widehat U_{P,n}(u+tf_n)=U_P(u+tf_n),
 \qquad t\in\R.
\]
For Lebesgue-a.e. \(t\), put \(v=u+tf_n\in\Omega_*\).  The ordinary
chain rule and the defining property of \(\Omega_*\) give
\[
 \begin{aligned}
 \frac{\dd}{\dd t}\widehat U_{P,n}(u+tf_n)
 &=\sum_{i=1}^N
   \partial_i\varphi\bigl(R_{e_1}(v),\ldots,R_{e_N}(v)\bigr)
   \left.\frac{\dd}{\dd r}\right|_{r=0}R_{e_i}(v+rf_n)\\
 &=\langle G_P(J(v)),f_n\rangle_{H_s}.
 \end{aligned}
\]
Thus the equality holds for
\((\Prob_h\otimes\mathcal L^1)\)-almost every \((u,t)\).  Since
\[
 ((u,t)\mapsto u+tf_n)_\#(\Prob_h\otimes\mathcal L^1)
 =\sigma_{f_n}\sim\Prob_h,
\]
the linewise derivative class therefore satisfies
\[
 \partial_{f_n}U_P
 =\langle G_P(J),f_n\rangle_{H_s}
 \quad\Prob_h\text{-a.e.}
\]
for every \(n\).  Parseval yields
\[
 \sum_{n\ge1}|\partial_{f_n}U_P|^2
 =\|G_P(J)\|_{H_s}^2\in L^1(\Prob_h),
\]
so
\[
 U_P\in\cD_s,
 \qquad
 \nabla_sU_P=G_P(J).
\]

We now pass from the cylinder core to the closed domain.  Let
\(F\in W_L^{1,2}\), and choose \(F_k\in\Cyl_L\) with
\[
 F_k\to F\quad\text{in }L^2(\mu_L),
 \qquad
 D_{L,0}F_k\to D_LF\quad\text{in }L^2(\mu_L;H_s).
\]
Since \(\mu_L=J_\#\Prob_h\),
\[
 \bigl(F_k(J),\nabla_s(F_k(J))\bigr)
 \longrightarrow
 \bigl(F(J),(D_LF)(J)\bigr)
\]
in \(L^2(\Prob_h)\times L^2(\Prob_h;H_s)\).  By
\cref{lem:ambient-directional-eid}(i),
\[
 F(J)\in\cD_s,
 \qquad
 \nabla_s(F(J))=(D_LF)(J).
\]
This proves (i), and (ii) follows by taking the \(H_s\)-inner product.
\end{proof}

For \(F=(F^1,\ldots,F^m)\in(W_L^{1,2})^m\), write
\[
 \Gamma_L[F]:=\bigl(\Gamma_L(F^i,F^j)\bigr)_{i,j=1}^m.
\]

\begin{corollary}
\label{cor:lqg-eid}
For every \(m\ge1\) and \(F\in(W_L^{1,2})^m\),
\begin{equation}
 F_\#\bigl(\det\Gamma_L[F]\cdot\mu_L\bigr)\ll\mathcal L^m.
\label{eq:lqg-eid}
\end{equation}
\end{corollary}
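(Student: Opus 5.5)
The corollary follows by pulling back to the ambient Gaussian space and pushing forward again. Fix \(m\ge1\) and \(F=(F^1,\ldots,F^m)\in(W_L^{1,2})^m\), and set
\[
 \widetilde F:=F\circ J=(F^1(J),\ldots,F^m(J)).
\]
By \cref{prop:lqg-exact-pullback}(i), each \(F^i(J)\in\cD_s\). Hence \(\widetilde F\in(\cD_s)^m\), and \cref{lem:ambient-directional-eid}(ii) applies to give
\[
 \widetilde F_\#\bigl(\det\Gamma_s[\widetilde F]\cdot\Prob_h\bigr)\ll\mathcal L^m.
\]

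Next we identify the ambient Gram matrix with the pulled-back image Gram matrix. By \eqref{eq:lqg-exact-pullback}, applied entrywise to the finitely many pairs \((i,j)\), we have \(\Gamma_s(F^i(J),F^j(J))=\Gamma_L(F^i,F^j)(J)\) \(\Prob_h\)-a.e. Taking a common null set, this gives
\[
 \det\Gamma_s[\widetilde F]=(\det\Gamma_L[F])\circ J
 \qquad\Prob_h\text{-a.e.}
\]
The function \(\det\Gamma_L[F]\) is nonnegative and lies in \(L^1(\mu_L)\) only up to integrability issues. This causes no difficulty, because absolute continuity of a possibly \(\sigma\)-finite pushforward is tested on Lebesgue-null Borel sets.

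Finally we transfer back using \eqref{eq:ambient-image-law}, \(\mu_L=J_\#\Prob_h\). Let \(A\subset\R^m\) be a Borel set with \(\mathcal L^m(A)=0\). Choose Borel representatives of \(F^i\) and of \(\det\Gamma_L[F]\) on \(\R^I\); composing with the Borel map \(J\) yields representatives of the ambient objects. The change-of-variables formula for image measures then gives
\[
 \int_{\R^I}\mathbf 1_A(F(x))\det\Gamma_L[F](x)\,\mu_L(\dd x)
 =\int_E\mathbf 1_A(\widetilde F(u))\det\Gamma_s[\widetilde F](u)\,\Prob_h(\dd u)=0,
\]
where the second equality is the ambient statement. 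This is exactly \eqref{eq:lqg-eid}.

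The only point needing care is bookkeeping of versions. The \(\mu_L\)-a.e. defined objects \(F^i\) and \(\Gamma_L\) must pull back to \(\Prob_h\)-a.e. well-defined objects independent of the chosen representatives. This holds precisely because \(\mu_L=J_\#\Prob_h\): a \(\mu_L\)-null set pulls back to a \(\Prob_h\)-null set. Once this is noted, the argument above is routine.
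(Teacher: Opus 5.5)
Your proposal is correct and follows the paper's own argument: pull back along \(J\) via \cref{prop:lqg-exact-pullback}, apply the ambient energy-image-density statement \cref{lem:ambient-directional-eid}(ii), and transfer back through \(\mu_L=J_\#\Prob_h\). Your explicit handling of representatives and of the nonnegative, possibly non-integrable density \(\det\Gamma_L[F]\) spells out what the paper's one-line identity leaves implicit.
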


\begin{proof}
By \cref{prop:lqg-exact-pullback}, \(\mu_L=J_\#\Prob_h\), and
\cref{lem:ambient-directional-eid}(ii),
\[
 F_\#\bigl(\det\Gamma_L[F]\cdot\mu_L\bigr)
 =(F(J))_\#\bigl(\det\Gamma_s[F(J)]\cdot\Prob_h\bigr)
 \ll\mathcal L^m.
\]
\end{proof}

The choice \(H_s=H^s(\C)\) is analytic: the resulting response energy
depends on \(s\) and on the planar parameterization, and is not asserted to
be an intrinsic LQG covariance or a conformally invariant Dirichlet form.

\subsection{Forest nondegeneracy}

By \eqref{eq:lqg-gradient} and the injectivity of \(\mathcal R_s\), it is enough
to prove linear independence of the signed occupation measures.  We use the
following consequence of fixed-target confluence: geodesics directed away from
a fixed marked target, with their other endpoints outside a sufficiently small
metric ball, agree on a nontrivial initial segment.  By fixed-target confluence
\cite[Theorem~1.3]{GwynneMiller2020Confluence} and countability of \(Q\), fix
\(\Omega_{\mathrm{conf}}\subset\Omega_{\mathrm{LQG}}\) with
\(\Prob(\Omega_{\mathrm{conf}})=1\) on which this holds simultaneously for all
marked targets.

\begin{lemma}
\label{lem:no-marked-interior}
For every \(h\in\Omega_{\mathrm{conf}}\) and \(e\in I\),
\[
 \gamma_e^h((0,1))\cap Q=\varnothing.
\]
\end{lemma}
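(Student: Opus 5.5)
We argue by contradiction. Suppose $h\in\Omega_{\mathrm{conf}}$, $e=\{x_e,y_e\}\in I$, and $q:=\gamma_e^h(t_0)\in Q$ for some $t_0\in(0,1)$. Since $\gamma_e^h$ is a geodesic between distinct points, $q\ne x_e,y_e$. The idea is to produce two distinct $D_h$-geodesics between a pair of points of $Q$, which contradicts the uniqueness built into $\Omega_{\mathrm{LQG}}$. Alternatively, we show that the geodesic from $q$ to one endpoint would have to coincide with two different paths near $q$.

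The first step is geodesic restriction. Subpaths of geodesics are geodesics, and the pairs $\{q,x_e\}$ and $\{q,y_e\}$ both lie in $I$. Uniqueness therefore identifies $\gamma_{\{q,x_e\}}^h$ and $\gamma_{\{q,y_e\}}^h$, up to orientation, with the two pieces $\gamma_e^h|_{[0,t_0]}$ and $\gamma_e^h|_{[t_0,1]}$ after reparameterization. Read away from $q$, these two geodesics leave $q$ in "opposite directions". Concatenated they form a geodesic through $q$, so their only common point is $q$: a point $w$ on both pieces other than $q$ would satisfy $D_h(x_e,w)=D_h(x_e,q)+D_h(q,w)$ and also $D_h(x_e,w)=D_h(x_e,q)-D_h(q,w)$, forcing $D_h(q,w)=0$.

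The second step applies fixed-target confluence with the marked target $q\in Q$. Both geodesics are directed away from $q$, and their other endpoints $x_e,y_e$ lie at positive distance from $q$, hence outside a sufficiently small metric ball around $q$. By the defining property of $\Omega_{\mathrm{conf}}$ they agree on a nontrivial initial segment: there is $\delta>0$ such that the two geodesics, parameterized by $D_h$-arclength from $q$, coincide on $[0,\delta]$. This contradicts the first step, since the point at arclength $\delta$ would be a common point different from $q$. Hence $\gamma_e^h((0,1))\cap Q=\varnothing$.

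The main obstacle is matching the precise form of the confluence statement with this use. The statement of \cite[Theorem~1.3]{GwynneMiller2020Confluence} may be phrased for geodesics from the target to all points outside a ball $B_{D_h}(q,r)$ with a random radius $\varepsilon<r$. We need it to hold simultaneously for every $q\in Q$ and to apply to the specific endpoints $x_e,y_e$. Countability of $Q$ handles the simultaneity, and choosing $r<\min\{D_h(q,x_e),D_h(q,y_e)\}$ handles the endpoints. We must also check that the geodesics in the confluence statement are the unique ones $\gamma^h_{\{q,x_e\}}$ and $\gamma^h_{\{q,y_e\}}$, which holds on $\Omega_{\mathrm{LQG}}$ because those pairs lie in $I$.
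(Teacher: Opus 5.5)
Your proposal is correct and follows essentially the same route as the paper: split the geodesic at the marked interior point $q$, use that the two halves, read from $q$, are geodesics to the marked target whose far endpoints lie outside a small ball $B_{D_h}(q,r)$, and invoke fixed-target confluence at $q$ to force them to coincide on $[0,\delta]$. The paper reaches the contradiction through the constant-speed identity $D_h(\eta_a(r),\eta_b(r))=2r>0$, while you show that the two halves meet only at $q$; this is the same computation, and your opening idea about producing two distinct geodesics is not needed.
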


\begin{proof}
Fix \(h\in\Omega_{\mathrm{conf}}\) and \(e=\{a,b\}\in I\).  Orient
the unique geodesic from \(a\) to \(b\) and denote this constant-speed
parametrization by \(\gamma\).  It is either \(\gamma_e^h\) or its reversal, so
it has the same image and interior image.  Set
\[
 L:=D_h(a,b)>0.
\]
Suppose
\[
 v=\gamma(t_0)\in Q,
 \qquad 0<t_0<1.
\]
Then \(v\ne a,b\).  The two subpaths of \(\gamma\) from \(a\) and \(b\) to
\(v\) are geodesics; by uniqueness they are the marked geodesics to \(v\).
Parameterize their reversals from \(v\) at unit \(D_h\)-speed by
\[
 \eta_a(r):=\gamma\!\left(t_0-\frac rL\right),
 \quad 0\le r\le Lt_0,
 \qquad
 \eta_b(r):=\gamma\!\left(t_0+\frac rL\right),
 \quad 0\le r\le L(1-t_0).
\]
Put
\[
 s_0:=\frac L2\min\{t_0,1-t_0\}>0.
\]
Since
\[
 D_h(v,a)=Lt_0>s_0,
 \qquad
 D_h(v,b)=L(1-t_0)>s_0,
\]
we have \(a,b\notin B_{D_h}(v,s_0)\).  Fixed-target confluence at \(v\)
yields \(\delta\in(0,s_0)\) such that
\[
 \eta_a(r)=\eta_b(r)
 \quad(0\le r\le\delta).
\]
On the other hand, since \(\gamma\) is a constant-speed \(D_h\)-geodesic,
for every \(0<r<\delta\),
\[
 \begin{aligned}
 0
 &=D_h\bigl(\eta_a(r),\eta_b(r)\bigr)\\
 &=D_h\!\left(
   \gamma\!\left(t_0-\frac rL\right),
   \gamma\!\left(t_0+\frac rL\right)
   \right)\\
 &=L\left|\left(t_0+\frac rL\right)
          -\left(t_0-\frac rL\right)\right|
 =2r>0,
 \end{aligned}
\]
and we have a contradiction.
\end{proof}

Let \(S\) be a Hausdorff space, let \(N\subset S\) be finite, and let
\((N,E_F)\) be a forest.  We have the following elementary result.

\begin{lemma}
\label{lem:leaf-support}
For each \(e\in E_F\), let \(\mu_e\) be a finite positive Radon measure on
\(S\) such that
\[
 v\in\supp\mu_e
 \quad\Longleftrightarrow\quad
 v\in e,
 \qquad e\in E_F,\quad v\in N.
\]
Then \((\mu_e)_{e\in E_F}\) is linearly independent.
\end{lemma}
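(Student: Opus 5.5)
Induction on \(|E_F|\) with leaf elimination.  Suppose \(\sum_{e\in E_F}c_e\mu_e=0\) with real coefficients \(c_e\); we show all \(c_e=0\).  If \(E_F=\varnothing\) there is nothing to prove.  Otherwise the finite acyclic graph \((N,E_F)\) has a vertex \(v\in N\) of degree exactly one: take a component containing an edge; it is a tree with at least two vertices, hence has a leaf.  Let \(e_*\) be the unique edge containing \(v\).  By the support hypothesis, \(v\in\supp\mu_{e_*}\), while \(v\notin\supp\mu_e\) for every \(e\in E_F\setminus\{e_*\}\), since \(v\notin e\) for those edges.

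Next localize at \(v\).  Each \(\supp\mu_e\) is closed, and the family is finite.  Hence \(U:=S\setminus\bigcup_{e\ne e_*}\supp\mu_e\) is an open neighbourhood of \(v\) on which \(\mu_e(A)=0\) for every Borel \(A\subseteq U\) and every \(e\ne e_*\).  This uses that a Radon measure vanishes on the complement of its support, which holds for Radon measures on Hausdorff spaces by inner regularity: a compact set disjoint from the support is covered by finitely many null open sets.  Evaluating the relation on \(U\) gives \(0=\sum_ec_e\mu_e(U)=c_{e_*}\mu_{e_*}(U)\).  Since \(v\in\supp\mu_{e_*}\) and \(U\) is an open neighbourhood of \(v\), \(\mu_{e_*}(U)>0\), so \(c_{e_*}=0\).

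Finally, remove the edge.  The graph \((N,E_F\setminus\{e_*\})\) is still a forest on the same vertex set, and the support hypothesis for the remaining measures is unchanged, because it refers only to membership \(v\in e\) for edges that remain.  The induction hypothesis therefore gives \(c_e=0\) for all remaining \(e\), proving linear independence.  Formally one inducts on the number of edges with the vertex set \(N\) kept fixed.

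The only delicate point is the measure-theoretic fact that a Radon measure gives zero mass to the open complement of its support in a general Hausdorff space, where \(S\) need not be second countable.  This is where the Radon hypothesis is used, through inner regularity by compacts together with the definition of the support as the complement of the union of all null open sets.
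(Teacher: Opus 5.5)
Your proof is correct and follows the paper's argument: induction on the number of edges, killing the coefficient of the edge at a leaf $v$ by evaluating the relation on an open neighbourhood of $v$ that is null for every other measure. The only difference is in how that neighbourhood is built. The paper takes $U$ to be the finite intersection of null open neighbourhoods $U_e\ni v$, which exist directly because $v\notin\supp\mu_e$; this avoids your appeal to inner regularity (that $S\setminus\supp\mu_e$ is null), so the Radon hypothesis is not actually needed at that step.
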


\begin{proof}
We induct on \(|E_F|\).  The empty case is trivial.  Suppose \(E_F\ne\varnothing\)
and
\[
 \sum_{e\in E_F}c_e\mu_e=0.
\]
Choose a leaf \(v\) in a component containing an edge, and let \(e_v\) be its
unique incident edge.  For every \(e\ne e_v\),
\(v\notin\supp\mu_e\), so there is an open neighborhood \(U_e\ni v\) with
\(\mu_e(U_e)=0\).  Put
\[
 U:=\bigcap_{e\in E_F\setminus\{e_v\}}U_e,
\]
with \(U=S\) if \(E_F=\{e_v\}\).  Then
\[
 \mu_{e_v}(U)>0,
 \qquad
 \mu_e(U)=0\quad(e\ne e_v),
\]
and hence
\[
 0=\sum_{e\in E_F}c_e\mu_e(U)
  =c_{e_v}\mu_{e_v}(U),
 \qquad
 c_{e_v}=0.
\]
The remaining edge set is again a forest, so the induction hypothesis gives
\(c_e=0\) for all \(e\in E_F\setminus\{e_v\}\).
\end{proof}

\subsection{Proof of Theorem~\ref{thm:intro-forest}}

Retain
\[
 E_F=\{e_0,e_1,\ldots,e_m\},
 \qquad
 N=\bigcup_{e\in E_F}e,
\]
from \cref{thm:intro-forest}.  For $h\in\Omega_{\mathrm{conf}}$ and
$e=\{a,b\}\in E_F$, \eqref{eq:lqg-occupation} gives
\[
 \pi_e(h)
 =(\gamma_e^h)_\#\bigl(\mathcal L^1|_{[0,1]}\bigr).
\]
Hence
\[
 \supp\pi_e(h)=\gamma_e^h([0,1]).
\]
Indeed, the right-hand side is compact and carries all of $\pi_e(h)$; if
$z=\gamma_e^h(t)$ and $U\ni z$ is open, then
$(\gamma_e^h)^{-1}(U)$ is a nonempty relatively open subset of $[0,1]$, so
\[
 \mathcal L^1\bigl((\gamma_e^h)^{-1}(U)\bigr)>0,
 \qquad
 \pi_e(h)(U)>0.
\]
Since the endpoints of $e$ belong to this support, while
\cref{lem:no-marked-interior} excludes every point of $N\setminus e$ from the
interior of $\gamma_e^h$, we obtain
\[
 v\in\supp\pi_e(h)
 \quad\Longleftrightarrow\quad
 v\in e,
 \qquad e\in E_F,\quad v\in N.
\]
Therefore \cref{lem:leaf-support} gives, for every $h\in\Omega_{\mathrm{conf}}$,
\[
 \sum_{e\in E_F}c_e\pi_e(h)=0
 \quad\Longrightarrow\quad
 c_e=0\qquad(e\in E_F).
\]

For such an $h$, suppose
\[
 \sum_{i=1}^m a_i\nu_{e_i}(h)=0
 \quad\text{in }H^{-s}(\C).
\]
Because $C_c^\infty(\C)\subset H_s$, the canonical embedding of finite signed
Radon measures into $H^{-s}(\C)$ is injective.  Using
$\nu_e=\pi_e-\pi_{e_0}$, we therefore have
\[
 \sum_{i=1}^m a_i\pi_{e_i}(h)
 -\left(\sum_{i=1}^m a_i\right)\pi_{e_0}(h)=0.
\]
The preceding linear independence forces
\[
 a_1=\cdots=a_m=0.
\]
Since $\widetilde g_e(h)=\xi\mathcal R_s\nu_e(h)$ and
$\xi\mathcal R_s$ is injective, the vectors
$\widetilde g_{e_1}(h),\ldots,\widetilde g_{e_m}(h)$ are linearly independent
for every $h\in\Omega_{\mathrm{conf}}$.  As
$\Prob(\Omega_{\mathrm{conf}})=1$ and \cref{prop:lqg-gradient-measurable} gives
\[
 g_{e_i}(J(h))=\widetilde g_{e_i}(h)
 \quad\Prob\text{-a.s.},\qquad i=1,\ldots,m,
\]
their Gram matrix is positive definite $\Prob$-almost surely.  Hence
\[
 \det\bigl(\langle g_{e_i}(J(h)),g_{e_j}(J(h))\rangle_{H_s}\bigr)_{i,j=1}^m
 >0
 \quad\Prob\text{-a.s.}
\]
Since $\mu_L=J_\#\Prob$,
\[
 \det\bigl(\langle g_{e_i}(x),g_{e_j}(x)\rangle_{H_s}\bigr)_{i,j=1}^m
 >0
 \quad\mu_L\text{-a.e. }x.
\]

The coordinate functions \(R_{e_i}\) are unbounded and hence are not
themselves in the cylinder core defined using \(C_b^\infty\).  We therefore
apply a bounded smooth change of variables before invoking the
energy-image-density statement.

Set
\[
 \mathbf R(x):=(R_{e_1}(x),\ldots,R_{e_m}(x)),
 \qquad
 T(y_1,\ldots,y_m):=(\arctan y_1,\ldots,\arctan y_m),
\]
and let $u:=T\circ\mathbf R$.  Since
$u_i=\arctan(R_{e_i})\in\Cyl_L$, \cref{prop:lqg-closed}(i) gives
\[
 D_Lu_i=\frac{g_{e_i}}{1+R_{e_i}^2},
 \qquad i=1,\ldots,m.
\]
Consequently, for $\mu_L$-almost every $x$,
\[
 \Gamma_L[u](x)
 =\left(
 \frac{\langle g_{e_i}(x),g_{e_j}(x)\rangle_{H_s}}
 {(1+R_{e_i}(x)^2)(1+R_{e_j}(x)^2)}
 \right)_{i,j=1}^m,
\]
and hence
\[
 \det\Gamma_L[u](x)
 =\left(\prod_{i=1}^m(1+R_{e_i}(x)^2)^{-2}\right)
 \det\bigl(\langle g_{e_i}(x),g_{e_j}(x)\rangle_{H_s}\bigr)_{i,j=1}^m
 >0.
\]

Let $A\in\mathcal B(( -\pi/2,\pi/2)^m)$ satisfy $\mathcal L^m(A)=0$.
By \cref{cor:lqg-eid},
\[
 0=u_\#\bigl(\det\Gamma_L[u]\cdot\mu_L\bigr)(A)
  =\int_{u^{-1}(A)}\det\Gamma_L[u] \,\dd\mu_L.
\]
Since $\det\Gamma_L[u]>0$ $\mu_L$-almost everywhere,
\[
 \mu_L\bigl(u^{-1}(A)\bigr)=0,
\]
so $u_\#\mu_L\ll\mathcal L^m$.

The map
\[
 T:\R^m\longrightarrow(-\pi/2,\pi/2)^m
\]
is a Lipschitz homeomorphism.  Hence, if
$B\in\mathcal B(\R^m)$ and $\mathcal L^m(B)=0$, then $T(B)$ is Borel and
$\mathcal L^m(T(B))=0$.  Since $u=T\circ\mathbf R$ and $T$ is injective,
\[
 \mu_L\bigl(\mathbf R^{-1}(B)\bigr)
 =\mu_L\bigl(u^{-1}(T(B))\bigr)=0.
\]
Therefore
\[
 \mathbf R_\#\mu_L\ll\mathcal L^m.
\]
Finally, $\mu_L=J_\#\Prob$ and
\[
 \mathbf R(J(h))
 =(R_{e_1}(h),\ldots,R_{e_m}(h)),
\]
so
\[
 \bigl(R_{e_1}(h),\ldots,R_{e_m}(h)\bigr)_\#\Prob
 \ll\mathcal L^m.
\]
This proves \cref{thm:intro-forest}.

The entire image-law construction and the proof of
\cref{thm:intro-forest} use no special property of the projective reference
edge beyond membership in \(I\).  Thus the construction may be repeated with
any \(e_*\in I\) as reference; in particular, \(R_e,J,\mu_L,g_e,D_L\), and
\(\Gamma_L\) are then understood relative to that choice.

\begin{corollary}
\label{cor:lqg-star-density}
The following hold.
\begin{enumerate}[label=\textnormal{(\roman*)}]
\item If $m\ge1$ and $a_0,a_1,\ldots,a_m,b\in Q$ are pairwise distinct, then
\[
\left(
\log\frac{D_h(a_1,b)}{D_h(a_0,b)},\ldots,
\log\frac{D_h(a_m,b)}{D_h(a_0,b)}
\right)_\#\Prob
\ll\mathcal L^m.
\]
\item For distinct $e,e'\in I$,
\[
\left(\log\frac{D_h(e)}{D_h(e')}\right)_\#\Prob
\ll\mathcal L^1.
\]
\end{enumerate}
\end{corollary}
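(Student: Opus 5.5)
The plan is to deduce both parts directly from \cref{thm:intro-forest}, using the remark just made that the reference edge may be any $e_*\in I$. The only thing to check in each case is that the relevant edge set, together with the chosen reference edge, forms a forest. Because the whole image-law construction and the proof of \cref{thm:intro-forest} are insensitive to which element of $I$ serves as reference, the theorem holds with $e_0$ replaced by any $e_*\in I$. The ratios $R_e$ are then $\log\bigl(D_h(e)/D_h(e_*)\bigr)$, and the conclusion concerns their joint law for $e_1,\ldots,e_m$ distinct from $e_*$.

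For (i), take $e_*:=\{a_0,b\}$ and $e_i:=\{a_i,b\}$ for $i=1,\ldots,m$. Pairwise distinctness of $a_0,\ldots,a_m,b$ has three consequences: all these pairs lie in $I$, they are pairwise distinct, and the graph on $N=\{a_0,\ldots,a_m,b\}$ with edge set $\{e_*,e_1,\ldots,e_m\}$ is a star centred at $b$. A star is acyclic: it has $m+1$ edges on $m+2$ vertices and is connected. Equivalently, any nonempty subfamily $E'$ of these edges spans $|E'|+1$ vertices, namely $b$ together with the $a_i$ it touches. \Cref{thm:intro-forest}, applied with reference $e_*$, then gives absolute continuity of the vector whose $i$-th entry is $\log\bigl(D_h(a_i,b)/D_h(a_0,b)\bigr)$, which is exactly the asserted statement.

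For (ii), take reference $e_*:=e'$ and $m=1$ with $e_1:=e$. Two distinct pairs never form a cycle: if they share a vertex they form a path on three vertices, and otherwise two disjoint edges. In both cases the subset condition is immediate, since $|E'|\le 2<|\bigcup E'|$. The theorem then yields $\bigl(\log(D_h(e)/D_h(e'))\bigr)_\#\Prob\ll\mathcal L^1$.

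The main (mild) obstacle is making the reference-edge change legitimate. One has to confirm that every ingredient used earlier is stated for an arbitrary fixed element of $I$ and not specifically for $e_0$. This covers the measurable reconstruction in \cref{prop:lqg-gradient-measurable} (the normalization $\widehat D_h$ by $D_{h^\rho}(e_0)$), the response formula \eqref{eq:lqg-projective-response}, the closed calculus of \cref{prop:lqg-closed}, and the pullback and energy-image-density statements \cref{prop:lqg-exact-pullback,cor:lqg-eid}. Each of these uses $e_0$ only as an element of $I$, and in \cref{lem:normalized-metric-measurable} only through $D_{h^\rho}(e_0)>0$. So I would state this replacement once and then invoke \cref{thm:intro-forest} verbatim.
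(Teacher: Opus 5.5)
Your proposal is correct and follows the paper's own argument: rerun the construction with reference edge $\{a_0,b\}$ (resp.\ $e'$), observe that the resulting star (resp.\ pair of distinct edges) is a forest, and apply \cref{thm:intro-forest}. One small slip: for a singleton $E'$ one has $|\bigcup E'|=2$, so the chain $|E'|\le 2<|\bigcup E'|$ fails as written, although the required inequality $|E'|<|\bigcup E'|$ still holds.
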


\begin{proof}
For (i), repeat the construction with $e_*=\{a_0,b\}$ as reference.  The edges
\[
 e_*,\{a_1,b\},\ldots,\{a_m,b\}
\]
form a star and hence a forest, so \cref{thm:intro-forest} gives the first
claim.  For (ii), repeat it with $e_*=e'$ and $m=1$; the two distinct edges $e'$ and $e$
form a forest, and \cref{thm:intro-forest} gives the second claim.
\end{proof}


\begin{thebibliography}{99}

\bibitem{Bogachev1998}
V.~I. Bogachev,
\newblock \emph{Gaussian Measures},
\newblock Mathematical Surveys and Monographs, vol.~62,
American Mathematical Society, Providence, RI, 1998.
\bibitem{BouleauHirsch1986AbsoluteContinuity}
N.~Bouleau and F.~Hirsch,
\newblock Propriétés d'absolue continuité dans les espaces de Dirichlet
et applications aux équations différentielles stochastiques,
\newblock in \emph{Séminaire de Probabilités XX, 1984/85},
Lecture Notes in Mathematics, vol.~1204, Springer, Berlin, 1986,
pp.~131--161.
\bibitem{BuragoBuragoIvanov2001}
D.~Burago, Y.~Burago, and S.~Ivanov,
\newblock \emph{A Course in Metric Geometry},
\newblock Graduate Studies in Mathematics, vol.~33,
American Mathematical Society, Providence, RI, 2001.
\bibitem{DubedatFalconetGwynnePfefferSun2020}
J.~Dubédat, H.~Falconet, E.~Gwynne, J.~Pfeffer, and X.~Sun,
\newblock Weak LQG metrics and Liouville first passage percolation,
\newblock \emph{Probab. Theory Related Fields} \textbf{178} (2020),
369--436.
\bibitem{GwynneMiller2020Confluence}
E.~Gwynne and J.~Miller,
\newblock Confluence of geodesics in Liouville quantum gravity for
\(\gamma\in(0,2)\),
\newblock \emph{Ann. Probab.} \textbf{48} (2020), 1861--1901.
\bibitem{GwynneMiller2021}
E.~Gwynne and J.~Miller,
\newblock Existence and uniqueness of the Liouville quantum gravity
metric for \(\gamma\in(0,2)\),
\newblock \emph{Invent. Math.} \textbf{223} (2021), 213--333.
\bibitem{Kechris1995}
A.~S. Kechris,
\newblock \emph{Classical Descriptive Set Theory},
\newblock Graduate Texts in Mathematics, vol.~156,
Springer, New York, 1995.
\bibitem{MillerQian2020}
J.~Miller and W.~Qian,
\newblock The geodesics in Liouville quantum gravity are not
Schramm--Loewner evolutions,
\newblock \emph{Probab. Theory Related Fields} \textbf{177} (2020),
677--709.
\bibitem{Sheffield2007}
S.~Sheffield,
\newblock Gaussian free fields for mathematicians,
\newblock \emph{Probab. Theory Related Fields} \textbf{139} (2007),
521--541.
\end{thebibliography}
\end{document}